\newcommand{\N}{\mathbb N}
\newcommand{\R}{\mathbb R}
 \newcommand{\X}{X}
\theoremstyle{plain}
\newtheorem{thm}{Theorem}[section] 
\theoremstyle{definition}
\newtheorem{Def}[thm]{\protect\definitionname}
\theoremstyle{plain}
\theoremstyle{plain}
\newtheorem{Prop}[thm]{Proposition}
\newtheorem{lemma}[thm]{Lemma}
\theoremstyle{definition}
\theoremstyle{remark}
\newtheorem{Rem}[thm]{Remark}
\newcommand\upperleft[3]{\prescript{#1}{}{\mathrlap{\smash{#2#3}}\phantom{#3}}}
\DeclareMathOperator*{\esssup}{esssup}
\DeclareMathOperator*{\essinf}{essinf}
\DeclareMathOperator{\id}{id}
\DeclareMathOperator{\Image}{Im}
\DeclareMathOperator{\spann}{span}
\DeclareMathOperator{\osc}{osc}
\renewcommand{\emptyset}{\varnothing}
\renewcommand{\bar}{\overline}
  \providecommand{\definitionname}{Definition}
\begin{document}
\title[Intermediately trimmed strong laws for Birkhoff sums on subshifts of finite type]
{Intermediately trimmed strong laws for Birkhoff sums on subshifts of finite type}

\author[Kesseb\"ohmer]{Marc Kesseb\"ohmer}
  \address{Universit\"at Bremen, Fachbereich 3 -- Mathematik und Informatik, Bibliothekstr. 1, 28359 Bremen, Germany}
  \email{\href{mailto:mhk@math.uni-bremen.de}{mhk@math.uni-bremen.de}}
\author[Schindler]{Tanja Schindler}
\address{Australian National University, Research School Finance, Actuarial Studies and Statistics, 26C Kingsley St,
Acton ACT 2601, Australia}
  \email{\href{mailto:tanja.schindler@anu.edu.au}{tanja.schindler@anu.edu.au}}

\keywords{almost sure convergence, strong law of large numbers, trimmed sum, spectral gap, subshift of finite type, St.\ Petersburg game}
 \subjclass[2010]{
    Primary: 60F15
    Secondary: 37A05, 37A30, 60G10, 60J10}
\date{\today}
\thanks{This research was supported  by the German Research Foundation (DFG) grant {\em Renewal Theory and Statistics of Rare Events in Infinite Ergodic Theory} (Gesch\"aftszeichen KE 1440/2-1).}

\begin{abstract}
We prove strong laws of large numbers under intermediate trimming for 
Birkhoff sums over subshifts of finite type.
This gives another application of a previous trimming result only proven for interval maps. 
In case of Markov measures we give a further example of St.\ Petersburg type distribution functions. 
To prove these statements we introduce the space of quasi-H\"older continuous functions 
for subshifts of finite type.
\end{abstract}
\maketitle

\section{Introduction and statement of main results}
If we consider an ergodic dynamical system $\left(\Omega,\mathcal{A},T, \mu\right)$ with $\mu$ a probability measure  
and a stochastic process given by the Birkhoff sum $\mathsf{S}_n\chi\coloneqq\sum_{k=1}^n\chi\circ  T^{k-1}$ with $\mathsf{S}_0\chi=0$ for some measurable function $\chi:\Omega\to \mathbb{R}_{\geq 0}$,
then with respect to a strong laws of large numbers there is a crucial difference between $\int\chi\mathrm{d}\mu$ being finite or not.
In the finite case we obtain by Birkhoff's ergodic theorem that $\mu$-almost surely (a.s.)  
$\lim_{n\to\infty}\mathsf{S}_n\chi/n=\int\chi\mathrm{d}\mu$,
i.e.\  the strong law of large numbers is fulfilled,
whereas for the case that $\chi$ is non-integrable, Aaronson ruled out the possibility of a strong law of large numbers, see \cite{aaronson_ergodic_1977}.

However, in certain cases it is possible to obtain a strong law of large numbers 
after deleting a number of the largest summands from the partial $n$-sums. More precisely,
for each $n\in\mathbb{N}$ we chose a permutation $\pi\in\mathcal{S}_{n}$
of $\left\{ 0,\ldots,n-1\right\} $ with $\chi\circ  T^{\pi\left(1\right)}\geq \chi\circ  T^{\pi\left(2\right)}\geq\ldots\geq \chi\circ  T^{\pi\left(n\right)}$
and for given $b\in\mathbb{N}_{0}$ we define 
\begin{align*}
\mathsf{S}_{n}^{b}\chi & \coloneqq\sum_{k=b}^{n-1}\chi\circ  T^{\pi\left(k\right)}.
\end{align*}

In \cite{kessebohmer_strong_2017} the authors considered a general setting of 
dynamical systems $(\Omega, \mathcal{A}, \mu, T)$
obeying a set of conditions given in Property $\mathfrak{D}$, see Definition \ref{def: Prop D}.
One key assumption is that the transfer operator (see \eqref{hat CYRI}) fulfills 
a spectral gap property on $\mathcal{F}$, 
a subset of the measurable functions forming a Banach algebra with respect to a norm $\left\|\cdot\right\|$.
For such systems the authors proved \emph{intermediately trimmed strong laws},
i.e.\ the existence of a sequence of natural numbers $(b_n)$ tending to infinity
with $b_n=o(n)$
and a norming sequence $(d_n)$
such that $\lim_{n\to\infty}\mathsf{S}_{n}^{b_{n}}\chi/d_n=1$ a.s.
One interesting case
is the example of regularly varying tail distributions for which the same trimming sequence can be chosen as in the i.i.d.\ case.
The key example studied in \cite{kessebohmer_strong_2017} are piecewise expanding interval maps.

It is the aim of the present paper to adapt these results 
to subshifts of finite type.
It turns out that in contrary to the example of piecewise expanding interval maps it is not immediately clear how to apply the results from \cite{kessebohmer_strong_2017} 
to subshifts of finite type
as the usually considered space of Lipschitz continuous functions does not fulfill all required properties of Property $\mathfrak{D}$.
In particular, Property $\mathfrak{D}$ requires $\left\|\mathbbm{1}_{\left\{\chi>\ell\right\}}\right\|\leq K$ uniformly in $\ell$
which can not be fulfilled for an unbounded potential $\chi$ with respect to the Lipschitz norm,
see Remark \ref{rem: equiv norms}.

Instead we consider the Banach space of quasi-H\"older continuous functions which is larger than the space of Lipschitz continuous functions but still obeys a spectral gap property. 
A similar Banach space was first considered by Blank, see \cite[Chapter 2.3]{blank_discreteness_1997}, and Saussol, see \cite{saussol_absolutely_2000}, in the context of multidimensional expanding maps. 

Furthermore, we prove some new limit theorems not given in \cite{kessebohmer_strong_2017} 
which we consider as particularly interesting for the application to observables on a subshift with a Gibbs-Markov measure, see Section \ref{soft Sbn}.

As a side result we obtain a limit theorem for sums of the truncated random variables.
Namely, for an observable $\chi:\Omega\to\mathbb{R}_{\geq 0}$ and a real valued sequence $\left(f_{n}\right)_{n\in\mathbb{N}}$
we consider the truncated sum $\mathsf{T}_n^{f_n}\chi\coloneqq \sum_{k=1}^n\left(\chi \cdot\mathbbm{1}_{\{\chi\leq f_n\}}\right)\circ T^{k-1}$.
These results are
not only valid for the considered setting of subshifts of finite type but also for other dynamical systems fulfilling a spectral gap property
and for i.i.d.\ random variables. 
This also improves a limit theorem for St.\ Petersburg games given in \cite{gyorfi_rate_2011} and \cite{nakata_limit_2015} for i.i.d.\ random variables
with stronger conditions on the truncation sequence, see Theorem \ref{thm: st peterburg} and Remark \ref{Rem: St Peterburg}.
\medskip

It is worth mentioning that proving limit theorems under trimming for non-integrable (independent and dependent) random variables
has a long tradition and particularly for the i.i.d.\ case there is a vast literature. 
Here we will only mention results stating a generalized strong law of large numbers. 

The first considered limit laws were \emph{lightly trimmed strong laws}, 
i.e.\ the existence of $b\in\mathbb{N}$ independent of $n$ and a norming sequence $(d_n)$ such that 
$\lim_{n\to\infty}\mathsf{S}_n^b/d_n=1$ holds almost surely. 

Mori developed
 general conditions
for lightly trimmed strong laws
for i.i.d.\ random variables, see \cite{mori_strong_1976}, \cite{mori_stability_1977}. 
These results have been generalized by Kesten and Maller, see \cite{maller_relative_1984}, and 
\cite{kesten_ratios_1992}.

It became obvious by a result by Kesten, see \cite{kesten_effect_1995},
that light trimming is not always enough to prove strong laws of large numbers, 
as in particular weak laws of large numbers for i.i.d.\ random variables 
do not change under light trimming.
One example for which no weak law of large numbers hold are regularly varying tail distributions with exponent $\alpha\in(-1,0)$.
For such distribution functions an intermediately trimmed strong law
can be easily deduced from results by Haeusler and Mason, see \cite{haeusler_laws_1987}, and a lower bound for the trimming sequence $(b_n)$ can be derived from
a result by Haeusler, see \cite{haeusler_nonstandard_1993}. 
In \cite{haeuler_asymptotic_1991} and \cite{kessebohmer_strong_2016} intermediately trimmed strong laws for other distribution functions are given.

The history for trimmed strong laws in the dynamical systems setting follows a similar line.
One of the first investigated examples for the case of dynamical systems is the unique continued
fraction expansion. Diamond and Vaaler showed in \cite{diamond_estimates_1986} a lightly trimmed strong law
for the digits of the continued fraction expansion.
Aaronson and Nakada extended the afore mentioned results by Mori to $\bm{\psi}$-mixing random variables
in \cite{aaronson_trimmed_2003}, i.e.\ they gave sufficient conditions for a lightly trimmed strong law to hold.
In \cite{haynes_quantitative_2014} Haynes gave a quantitative strong law of large numbers under light trimming
for certain classes of dynamical systems. 
The results in \cite{aaronson_trimmed_2003} and \cite{haynes_quantitative_2014} are also 
applicable for subshifts of finite type obeying some additional conditions.

However, for certain types of dynamical systems light trimming is not enough 
even though a lightly trimmed strong law would hold in the i.i.d.\ case for random variables with the same distribution function, 
see \cite{aaronson_trimmed_2003} and \cite{haynes_quantitative_2014}; see also \cite{schindler_observables_2018} for the corresponding
intermediately trimmed strong law for such a system.

In contrast to the lightly trimmed case an intermediately trimmed strong law 
for regularly varying tail distributions with exponent $\alpha\in(-1,0)$
holds with the same trimming sequence $(b_n)$ and norming sequence $(d_n)$
for both i.i.d.\ random variables and a large class of dynamical systems,
see \cite{kessebohmer_strong_2017}.
As noted above we will prove that a large class of observables 
on subshifts of finite type can also fulfill these properties 
and the same intermediately trimmed strong law holds.

\subsection{The space of quasi-H\"older continuous functions}
In order to state our main theorem we first introduce the space of quasi-H\"older continuous functions on the subshift of finite type
\begin{align*}
\X \coloneqq\left\{x=\left(x_n\right)_{n\in\mathbb{N}_0}\colon x_n\in \mathbb{A},\text{ } A\left(x_n,x_{n+1}\right)=1\right\},
\end{align*}
where $\mathbb{A}$ denotes a finite alphabet and 
$A\in\left\{0,1\right\}^{\mathbb{A}\times \mathbb{A}}$ is an irreducible and aperiodic matrix (see Definition \ref{irred aperiod}).
For the following we denote by $\mathbb{A}_n$ the set of all admissible sequences of length $n$
and for $\mathsf{A}\in\mathbb{A}_n$ we let $\left[\mathsf{A}\right]\subset \X$ denote the cylinder set determined by $\mathsf{A}$ 
and let $\X$ represent the cylinder set of the empty word.
The $\sigma$-algebra generated by the set of cylinder sets will be denoted by $\mathcal{B}$.
With $\sigma:\X\to \X$ we denote the left shift and
  we define a metric $d_1$ on $\X$ by
$d_1\left(x,y\right)\coloneqq \theta^{k}$, for some $\theta\in\left(0,1\right)$ where $k\in\mathbb{N}_0$ is the largest integer 
such that $x_i=y_i$ for all $i\leq k$.
Next we fix a probability measure $\mu$ 
that is a  $g$-measure 
such that  the  corresponding $g$-function (see Definition   \ref{def: g func meas}) is given by 
$\exp(f)$ for $f\colon \X \to\mathbb{R}_{>0}$ being Lipschitz continuous with respect to the metric $d_1$.
Note that a $g$-measure is always $\sigma$-invariant, atomless and assigns positive measure to every non-empty cylinder set.

For a measurable function $h\colon\X \to\mathbb{R}_{\geq 0}$ the oscillation on $C\subset \X $ is given by
\begin{align*}
 \osc\left( h ,C\right)\coloneqq\esssup_{x\in C} h \left(x\right)-\essinf_{x\in C} h \left(x\right)
\end{align*}
and we set $\osc\left( h ,\emptyset\right)\coloneqq0$.
For the fixed  measure $\mu$ we define the metric  $d_2$ on $\X$ given by
\[
  d_2(x,y)\coloneqq \inf\{\mu(C)\colon x,y\in C, C\subset \X  \mbox{  cylinder set}\}
\] 
and we let $B\left(\epsilon,x\right)$ denote the $\epsilon$-ball around $x$ with respect to this metric.
Then for fixed $\epsilon_0\in \left(0,1\right)$ 
we define
\begin{align*}
\left| h \right|_{\epsilon_0}\coloneqq\sup_{0<\epsilon\leq\epsilon_0}\frac{\int \osc\left( h , B\left(\epsilon,x\right)\right) \mathrm{d}\mu}{\epsilon} 
\end{align*}
to obtain the norm
\begin{align*}
\left\| h \right\|_{\epsilon_0}\coloneqq \left| h \right|_{\infty}+\left| h \right|_{\epsilon_0}.
\end{align*}
We will show in Lemma \ref{lem: H Banach algebra} that 
$\left\|\cdot \right\|_{\epsilon_0}$ is indeed a norm.
With this at hand we can define the space of {\em quasi-H\"older continuous functions}
\begin{align*}
\mathsf{H}\coloneqq\left\{ h \in \mathcal{L}^{\infty}\left(\X ,\mathbb{R}\right)\colon \left| h \right|_{\epsilon_0}<\infty\right\}.
\end{align*}
We remark here that the norm $\|\cdot\|_{\epsilon_0}$ depends on $\epsilon_0$, 
but $\mathsf{H}$ is independent of the choice of $\epsilon_0$.

Given these definitions we are able to state the setting for our main theorem.
\begin{Def}
For a $\mathcal{B}$-measurable function $\chi\colon \X \to \mathbb{R}_{\geq 0}$ we set $\upperleft{\ell}{}{\chi}\coloneqq\chi\cdot\mathbbm{1}_{\left\{\chi\leq \ell\right\}}$ and say that $\left(\X , \mathcal{B}, \mu,\chi\right)$ fulfills Property $\mathfrak{F}$ if 
the following conditions hold:  
\begin{itemize}
 \item There exists $K_1>0$ and $\epsilon_0>0$ such that for all $\ell>0$
 \begin{align}
  \left|\upperleft{\ell}{}{\chi}\right|_{\epsilon_0}\leq K_1\ell.\label{eq: cond chi 1}
 \end{align}
\item There exists $K_2>0$ and $\epsilon_0'>0$ such that for all $\ell>0$
 \begin{align}
  \left|\mathbbm{1}_{\{\chi\geq \ell\}}\right|_{\epsilon_0'}\leq K_2.\label{eq: cond chi 2}
 \end{align}
\end{itemize}
\end{Def}

\begin{Rem}\label{rem: equiv norms}
In \cite{kessebohmer_strong_2017} we impose Property $\mathfrak{D}$
(see Definition \ref{def: Prop D})
as a sufficient condition for a  trimmed strong law to hold, which in particular 
requires  that there exists $K_2>0$ such that for all $\ell>0$ we have
$\left\|\mathbbm{1}_{\left\{\chi>\ell\right\}}\right\|\leq K_2$, see \eqref{C 2}.
Now, the space of Lipschitz continuous functions (with respect to the metric $d_1$)
is given by
$F_{\theta}\left(\X \right)\coloneqq\left\{f:\X \to\mathbb{R}_{\geq0}\colon \left|f\right|_{\theta}<\infty\right\}$,
where 
\begin{align*}
\left|f\right|_{\theta}\coloneqq\sup_{x,y\in\X}\frac{\left|f\left(x\right)-f\left(y\right)\right|}{d_1\left(x,y\right)}
\end{align*}
and is equipped with the norm
$\left\|f\right\|_{\theta}\coloneqq\left|f\right|_{\infty}+\left|f\right|_{\theta}$.
From this it becomes apparent that  we can not find an unbounded observable $\chi$ such that 
Property $\mathfrak{D}$ is fulfilled for 
$\left(\X ,\mathcal{B},\mu,\sigma,F_{\theta},\left\|\cdot\right\|_{\theta},\chi\right)$.
Indeed, if $\chi$ is unbounded, then, for all $n>0$ there exists $\ell>0$ and $x,y\in\X$ 
with $d_1\left(x,y\right)\leq \theta^n$, 
$\mathbbm{1}_{\{\chi> \ell\}}\left(x\right)=1$,
and $\mathbbm{1}_{\{\chi> \ell\}}\left(y\right)=0$.
Hence,  $\left\|\mathbbm{1}_{\{\chi> \ell\}}\right\|_{\theta}\geq \theta^{-n}$. 
For this reason it turns out that the quasi-H\"older continuous functions witnessing  only one pole are   more appropriate observables to allow \eqref{C 2} to hold.
\end{Rem}

\subsection{Main theorem}\label{subshifts}
Before stating our main theorem we first define the notion of regular and slow variation. 
A function $L$ is called slowly varying if for every $c > 0$
we have $L (cx) \sim L (x)$.
Here, $u (x) \sim w (x)$ means that $u$ is asymptotic to $w$
at infinity, i.e.\ $\lim_{x\to\infty}u (x) /w (x) = 1$.
A function $g$ is called regularly varying with index $\alpha$ if it can be written as 
$g(x)=L(x)\cdot x^{\alpha}$ with $L$ being slowly varying.
For $L$ being slowly varying we denote by $L^{\#}$
a de Bruijn conjugate of $L$, i.e.\ 
a slowly varying function satisfying 
\begin{align*}
\lim_{x\rightarrow\infty}L\left(x\right)\cdot L^{\#}\left(xL\left(x\right)\right)  =1=
\lim_{x\rightarrow\infty}L^{\#}\left(x\right)\cdot L\left(xL^{\#}\left(x\right)\right).
\end{align*}
For more details  see \cite[Section 1.5.7 and Appendix 5]{bingham_regular_1987}.
Furthermore, we set
\[
\Psi\coloneqq\left\{ u:\mathbb{N}\rightarrow\mathbb{R}_{>0}\colon\sum_{n=1}^{\infty}\frac{1}{u\left(n\right)}<\infty\right\} .
\]
\begin{thm}\label{thm: main thm reg var} 
Let $\left(\X , \mathcal{B}, \mu,\chi\right)$
fulfill Property $\mathfrak{F}$
and let additionally $\chi$ be such that 
$\mu\left(\chi>x\right)=L\left(x\right)/x^{\alpha}$ with $L$ a slowly varying function and $0<\alpha<1$. 
Further, let $\left(b_{n}\right)_{n\in\mathbb{N}}$ be a sequence
of natural numbers tending to infinity with $b_{n}=o\left(n\right)$.
If there exists $\psi\in\Psi$ such that 
\begin{align*}
\lim_{n\rightarrow\infty}\frac{b_{n}}{\log\psi\left(\left\lfloor \log n\right\rfloor \right)}=\infty,
\end{align*}
then there exists a positive valued sequence $\left(d_n\right)_{n\in\mathbb{N}}$ such that 
\begin{align}
\lim_{n\rightarrow\infty}\frac{\mathsf{S}_n^{b_n}\chi}{d_n}=1\text{ a.s.}\label{eq: lim Snbnchi}
\end{align}
and $\left(d_n\right)$ fulfills 
\begin{align}
d_{n}\sim\frac{\alpha}{1-\alpha}\cdot n^{1/\alpha}\cdot b_{n}^{1-1/\alpha}\cdot \left(L^{1/\alpha}\right)^{\#}\left(\left({n}/{b_{n}}\right)^{1/\alpha}\right).\label{bn psi exp}
\end{align}
\end{thm}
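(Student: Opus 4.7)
The plan is to reduce Theorem~\ref{thm: main thm reg var} to the regularly-varying case of the abstract intermediately trimmed strong law of \cite{kessebohmer_strong_2017}, which holds for any system satisfying Property~$\mathfrak{D}$. The task is therefore to verify that $(\X,\mathcal{B},\mu,\sigma)$ together with the Banach algebra $\mathcal{F}=\mathsf{H}$, the norm $\|\cdot\|_{\epsilon_0}$, and the observable $\chi$ meet all clauses of Property~$\mathfrak{D}$; once this is done, the growth condition $b_n/\log\psi(\lfloor\log n\rfloor)\to\infty$ for some $\psi\in\Psi$ is exactly the trimming rate required by the abstract theorem, and both \eqref{eq: lim Snbnchi} and the explicit asymptotic \eqref{bn psi exp} for $d_n$ follow directly.

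Property~$\mathfrak{D}$ decomposes into four ingredients: (a) $(\mathsf{H},\|\cdot\|_{\epsilon_0})$ is a Banach algebra, (b) the transfer operator $\hat{\mathsf{L}}$ of the $g$-measure $\mu$ admits a spectral gap on $\mathsf{H}$, (c) the truncated observable obeys $\|\upperleft{\ell}{}{\chi}\|_{\epsilon_0}\leq K_1'\ell$, and (d) the indicator $\|\mathbbm{1}_{\{\chi>\ell\}}\|_{\epsilon_0}\leq K_2'$ is uniformly bounded in $\ell$. Claim (a) is provided by Lemma~\ref{lem: H Banach algebra}. Claims (c) and (d) are immediate from Property~$\mathfrak{F}$: since $|\upperleft{\ell}{}{\chi}|_\infty\leq\ell$ and $|\mathbbm{1}_{\{\chi>\ell\}}|_\infty\leq 1$, the estimates \eqref{eq: cond chi 1} and \eqref{eq: cond chi 2} yield $\|\upperleft{\ell}{}{\chi}\|_{\epsilon_0}\leq(1+K_1)\ell$ and $\|\mathbbm{1}_{\{\chi>\ell\}}\|_{\epsilon_0'}\leq 1+K_2$; after replacing both parameters by $\min(\epsilon_0,\epsilon_0')$, using monotonicity of the supremum defining $|\cdot|_{\epsilon_0}$, a single common parameter serves both bounds. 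Claim (b) is the technical heart: one establishes a Lasota--Yorke type inequality bounding $\|\hat{\mathsf{L}}h\|_{\epsilon_0}$ by a contracting multiple of $|h|_{\epsilon_0}$ plus a $|h|_\infty$-term, using Lipschitz continuity of the $g$-function on $(\X,d_1)$ branch-by-branch and covering $d_2$-balls $B(\epsilon,x)$ by admissible cylinders to transfer the resulting oscillation estimates from $d_1$ to $d_2$. Combined with aperiodic irreducibility of $A$ and standard quasi-compactness arguments this yields the spectral gap.

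With Property~$\mathfrak{D}$ established, the tail $\mu(\chi>x)=L(x)/x^\alpha$ with $0<\alpha<1$ puts us in the regime of the regularly-varying intermediately trimmed strong law from \cite{kessebohmer_strong_2017}, delivering a sequence $(d_n)$ with $\mathsf{S}_n^{b_n}\chi/d_n\to 1$ almost surely. The explicit form of $d_n$ is computed as $n$ times the expectation of $\chi$ truncated at the threshold $\ell_n$ defined by $n\mu(\chi>\ell_n)=b_n$: inverting the tail via de Bruijn conjugation of the slowly varying factor $L^{1/\alpha}$ gives $\ell_n\sim(n/b_n)^{1/\alpha}(L^{1/\alpha})^\#\bigl((n/b_n)^{1/\alpha}\bigr)$, and integration by parts together with Karamata's theorem, applicable since $\alpha<1$, produces the prefactor $\alpha/(1-\alpha)$, yielding \eqref{bn psi exp}. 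The only substantial obstacle is claim (b): the Lipschitz condition on $\log g$ gives contraction in the symbolic metric $d_1$, whereas $\|\cdot\|_{\epsilon_0}$ is built from the measure-metric $d_2$, so the Lasota--Yorke estimate hinges on a careful comparison between the two, made possible by the fact that cylinder measures for the Gibbs-type $\mu$ decay exponentially in cylinder length.
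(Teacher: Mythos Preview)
Your proposal is correct and follows essentially the same route as the paper: reduce to \cite[Theorem~1.7]{kessebohmer_strong_2017} by verifying that Property~$\mathfrak{F}$ implies Property~$\mathfrak{D}$ for $(\X,\mathcal{B},\mu,\sigma,\mathsf{H},\|\cdot\|_{\epsilon_0},\chi)$, with the Banach-algebra structure, the norm bounds on $\upperleft{\ell}{}{\chi}$ and $\mathbbm{1}_{\{\chi>\ell\}}$, and the spectral gap of the transfer operator on $\mathsf{H}$ as the ingredients. One small technical point: in the Lasota--Yorke/Hennion--Herv\'e scheme the paper takes the weak semi-norm to be $|\cdot|_1$ rather than $|\cdot|_\infty$ as you write, which is what makes the precompactness clause go through; your sketch of the explicit $d_n$ via Karamata and de~Bruijn conjugation is exactly what is packaged inside the cited external theorem.
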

See \cite[Remark 1.8 and Remark 1.9]{kessebohmer_strong_2017} for remarks on this theorem in the general setting.

\begin{Rem}
Note that in \cite{kessebohmer_strong_2017} more strong laws under trimming are provided for tuples $\left(\Omega, \mathcal{A}, T , \mu,\mathcal{F},\left\|\cdot\right\|,\chi\right)$ fulfilling Property $\mathfrak{D}$  (see   Definition \ref{def: Prop D}).
Our approach is to prove that $\left(\X , \mathcal{B}, \mu,\chi\right)$
fulfilling Property $\mathfrak{F}$
implies the existence of $\epsilon_0\in\left(0,1\right)$
such that $\left(\X , \mathcal{B},\sigma, \mu,\mathsf{H},\left\|\cdot\right\|_{\epsilon_0},\chi\right)$ fulfills Property $\mathfrak{D}$, see Lemma \ref{lem: Prop E implies Prop D}.
Hence, all the statements given in \cite{kessebohmer_strong_2017} also hold for the tuple
$\left(\X , \mathcal{B},\sigma, \mu,\mathcal{F},\left\|\cdot\right\|_{\epsilon_0},\chi\right)$.
For brevity we will not restate these results.
\end{Rem}

\subsection{Trimming statements for Markov systems}\label{soft Sbn}
\begin{thm}\label{thm: Markov}
Let $\X $ be a one-sided subshift of finite type and $\mathbb{A}\coloneqq\left\{\mathfrak{a}_1,\ldots,\mathfrak{a}_k\right\}$, $k\geq 2$, its alphabet. 
Let  $\mu$ be a stationary Markov measure compatible  with an irreducible and aperiodic matrix $A\in\left\{0,1\right\}^{k\times k}$ (see Definition \ref{irred aperiod} and Definition \ref{Def markov measure}) such that $q\coloneqq\mu\left(x_n=\mathfrak{a}_1\lvert x_{n-1}=\mathfrak{a}_1\right)> 0$ and $R\coloneqq\mu\left(x_1=\mathfrak{a}_1\right)/q$.
For $\eta>1/q$ let $\chi:\X \to\mathbb{R}_{\geq 0}$ be given by
\begin{align*}
 \chi\left(\left(x_n\right)\right)\coloneqq\eta^{\min\left\{j\in\mathbb{N}\colon x_j\neq \mathfrak{a}_1\right\}-1}.
\end{align*}
Further, let $\left(b_n\right)_{n\in\mathbb{N}}$ be a sequence of natural numbers tending to infinity with $b_n=o\left(n\right)$
and assume there exists $\psi\in\Psi$ fulfilling
\begin{align*}
\lim_{n\to\infty}\frac{b_n}{\log\psi\left(\left\lfloor\log n\right\rfloor\right)}=\infty.
\end{align*}
Then there exists a sequence of constants $(d_n)$ such that
\begin{align*}
\lim_{n\to\infty}\frac{\mathsf{S}_n^{b_n}\chi}{d_n}=1\text{ a.s.}
\end{align*}
\end{thm}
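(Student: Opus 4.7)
The plan is to reduce Theorem \ref{thm: Markov} to the general trimming machinery of the paper by verifying Property $\mathfrak{F}$ for $(\X,\mathcal{B},\mu,\chi)$, then analysing the tail $\mu(\chi>\cdot)$ via the Markov structure, and finally invoking the St.\,Petersburg-type trimming result developed in Section \ref{soft Sbn}. Theorem \ref{thm: main thm reg var} itself cannot be applied directly because the tail of $\chi$ is only regularly varying up to an oscillating, log-periodic factor.

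First, analyse the tail. The event $\{\chi>\eta^{m-1}\}$ equals the cylinder $[\mathfrak{a}_1^m]$, which by the Markov property has measure $Rq^m$. Setting $\alpha\coloneqq-\log q/\log\eta$, the hypothesis $\eta>1/q$ gives $\alpha\in(0,1)$, and writing $x=\eta^{m-1+t}$ with $t\in[0,1)$ yields
\begin{align*}
\mu(\chi>x)=\frac{L(x)}{x^\alpha},\qquad L(x)=R\,\eta^{\alpha(t(x)-1)},\qquad t(x)=\frac{\log x}{\log\eta}-\left\lfloor\frac{\log x}{\log\eta}\right\rfloor,
\end{align*}
so $L$ is $\log\eta$-periodic in $\log x$ and bounded between $R\eta^{-\alpha}$ and $R$, but is not slowly varying in the strict sense.

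Next, verify Property $\mathfrak{F}$. Because $\chi$ is constant on each $[\mathfrak{a}_1^{j-1}]\setminus[\mathfrak{a}_1^j]$, the level sets $\{\chi\ge\ell\}$ and the truncations $\upperleft{\ell}{}{\chi}$ are finite combinations of indicators of cylinders of the form $[\mathfrak{a}_1^n]$. Using that $B(\epsilon,x)$ is the smallest cylinder through $x$ of $\mu$-measure below $\epsilon$, a direct check shows that $\osc(\mathbbm{1}_{[\mathfrak{a}_1^n]},B(\epsilon,x))=1$ only if $B(\epsilon,x)\supsetneq[\mathfrak{a}_1^n]$, which forces $x\in[\mathfrak{a}_1^k]\setminus[\mathfrak{a}_1^{k+1}]$ for some $k<n$ with $Rq^k<\epsilon\le Rq^{k-1}$; hence $\int\osc(\mathbbm{1}_{[\mathfrak{a}_1^n]},B(\epsilon,x))\,\mathrm{d}\mu\le Rq^k<\epsilon$ uniformly in $n$. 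This gives $\left|\mathbbm{1}_{\{\chi\ge\ell\}}\right|_{\epsilon_0'}\le 1$, i.e.\ \eqref{eq: cond chi 2}. Decomposing $\upperleft{\ell}{}{\chi}=\sum_{j=1}^{M}\eta^{j-1}\bigl(\mathbbm{1}_{[\mathfrak{a}_1^{j-1}]}-\mathbbm{1}_{[\mathfrak{a}_1^j]}\bigr)$ with $M$ the smallest integer exceeding $\log\ell/\log\eta$ and summing the previous bound produces $\left|\upperleft{\ell}{}{\chi}\right|_{\epsilon_0}\le 2\eta(\eta-1)^{-1}\ell$, which is \eqref{eq: cond chi 1}.

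Finally, Lemma \ref{lem: Prop E implies Prop D} lifts Property $\mathfrak{F}$ to Property $\mathfrak{D}$ on $(\X,\mathcal{B},\sigma,\mu,\mathsf{H},\left\|\cdot\right\|_{\epsilon_0},\chi)$. The main obstacle is the oscillation of $L$: it invalidates the de Bruijn asymptotic \eqref{bn psi exp} and blocks a direct appeal to Theorem \ref{thm: main thm reg var}. To overcome it, I apply the St.\,Petersburg-type trimming theorem of Section \ref{soft Sbn}, whose hypotheses require only the estimate $\mu(\chi>x)\le Cx^{-\alpha}$ with $\alpha\in(0,1)$: choose a truncation level $f_n$ so that $n\,\mu(\chi>f_n)$ is of order $b_n$, set $d_n\coloneqq\int\mathsf{T}_n^{f_n}\chi\,\mathrm{d}\mu$, and apply the paper's limit theorem for truncated sums to deduce $\mathsf{T}_n^{f_n}\chi/d_n\to 1$ a.s. The growth condition $b_n/\log\psi(\lfloor\log n\rfloor)\to\infty$ then renders the discrepancy $\mathsf{S}_n^{b_n}\chi-\mathsf{T}_n^{f_n}\chi$ negligible relative to $d_n$, yielding $\mathsf{S}_n^{b_n}\chi/d_n\to 1$ a.s.
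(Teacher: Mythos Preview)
Your strategy matches the paper's exactly: verify Property $\mathfrak{F}$ for $(\X,\mathcal{B},\mu,\chi)$, upgrade to Property $\mathfrak{D}$ via Lemma \ref{lem: Prop E implies Prop D}, and then invoke the St.\ Petersburg--type trimming result (Theorem \ref{thm: asymp thm}). Two small inaccuracies worth fixing: $B(\epsilon,x)$ is the \emph{largest} (not smallest) cylinder through $x$ of $\mu$-measure below $\epsilon$, and in your oscillation estimate the set where $\osc(\mathbbm{1}_{[\mathfrak{a}_1^n]},B(\epsilon,\cdot))=1$ is all of $[\mathfrak{a}_1^k]$, not just the ring $[\mathfrak{a}_1^k]\setminus[\mathfrak{a}_1^{k+1}]$---but your measure bound $Rq^k<\epsilon$ still holds. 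Also, Theorem \ref{thm: asymp thm} does not merely require the upper bound $\mu(\chi>x)\le Cx^{-\alpha}$; it needs the exact geometric law $\mu(\chi=\eta^k)=R'q^k$ together with $\|\mathbbm{1}_{\{\chi=\ell\}}\|\le K_3$, both of which follow from your tail computation (with $R'=R(1-q)$) and your cylinder-oscillation bound, so no real gap arises.
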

\begin{Rem}
In certain cases $d_n$ can be explicitly given:
Assume that
$\left(b_n\right)$ can be written as 
$b_n=R/(1-q)\cdot q^{k_n}\cdot n+w_n$
with 
$w_n\geq \left(q^{k_n}\cdot n\right)^{1/2+\epsilon}\cdot \log \psi\left(\left\lfloor \log n\right\rfloor\right)^{1/2-\epsilon}$
and $w_n=o\left(q^{k_n}\right)$ for $\left(k_n\right)$ being a sequence of natural numbers,
then 
\begin{align*}
d_{n}\sim\frac{\eta}{q\cdot \eta-1}\cdot R^{-\log \eta/\log q}\cdot \left(1-q\right)^{1+\log \eta/\log q}\cdot  n^{-\log\eta/\log q}\cdot b_n^{1+\log\eta/\log q}.
\end{align*}

Note that there is an analogy to formula \eqref{bn psi exp} with $\alpha\coloneqq-\log q/\log\eta$ and $L=1$. However, it is not possible to apply the same method here since $\chi$ does not have a regularly varying tail distribution.
\end{Rem}

\begin{Rem}
 We will state a theorem with a St.\ Petersburg type distribution functions also in the more general setting introduced in \cite{kessebohmer_strong_2017}, see Theorem \ref{thm: asymp thm},
 i.e.\ in this setting the theorem can also be applied to piecewise expanding interval maps (and possibly other systems).
\end{Rem}

\subsection{Structure of the paper}
In Section \ref{sec: add stat} 
we state and prove 
additional limit theorems under the more general setting of Property $\mathfrak{D}$.
This is a general property for dynamical systems given in \cite{kessebohmer_strong_2017}.
We specify this property in Section \ref{subsec: gen setting}
and state an intermediately trimmed strong law for St.\ Petersburg type distribution functions in Section \ref{gen trimming}. 
In Section \ref{sec: proof main thm} we give a general approach for proving an intermediately trimmed strong law.
This is a refinement of the approach given in \cite[Section 2.1]{kessebohmer_strong_2017} 
putting extra emphasize on the appearance of ties. 
One main ingredient of this proof is a truncated limit theorem given in Section \ref{subsec: trunc sums}. 
Finally, we give the proof of the general theorem in Section \ref{subsec: proof asymp thm}
which will be the basis to prove Theorem \ref{thm: Markov}.

Section \ref{sec: quasi Hoelder} is devoted to prove certain properties of the space of quasi-H\"older continuous functions 
for subshifts of finite type. After giving some necessary definitions in Section \ref{subsec: def and rem}
we prove the main property of this space, the spectral gap property, in Section \ref{subsec: spec gap}.
In Section \ref{subsec: prop D holds} we prove Theorem \ref{thm: main thm reg var} and Theorem \ref{thm: Markov}.
The main step is to show that
 if $(\X , \mathcal{B},\mu, \chi)$ fulfills Property $\mathfrak{F}$, 
 then there exists $\epsilon_0\in (0,1)$ such that $\left(\X ,\mathcal{B},\mu,\sigma,\mathsf{H},\left\|\cdot\right\|_{\epsilon_0},\chi\right)$
 fulfills Property $\mathfrak{D}$.
This enables us to use the machinery established in \cite{kessebohmer_strong_2017}
and to prove Theorem \ref{thm: main thm reg var}.
Finally, we take a closer look at the example of Gibbs-Markov measures given in Section \ref{soft Sbn}
and show that for this setting Property $\mathfrak{F}$ holds which proves Theorem \ref{thm: Markov}.

\section{Further limit results in the general setting}\label{sec: add stat}
\subsection{General setting}\label{subsec: gen setting}
In the following we denote the spectral radius of an operator $U$ by $\rho\left(U\right)$
and give first the definition of a spectral gap.
\begin{Def}[Spectral gap]\label{def spec gap}
\index{spectral gap}
Suppose $\mathcal{F}$ is a Banach space and $U:\mathcal{F}\to\mathcal{F}$ a bounded linear operator. We say that $U$ has a spectral gap if there exists a decomposition $U=\lambda P+N$ with $\lambda\in\mathbb{C}$ and $P,N$ bounded linear operators such that 
\begin{itemize}
\item $P$ is a projection, i.e.\ $P^2=P$ and $\dim\left(\Image\left(P\right)\right)=1$\index{projection},
\item $N$ is such that $\rho\left(N\right)<\left|\lambda\right|$,
\item $P$ and $N$ are orthogonal, i.e.\ $PN=NP=0$\index{orthogonal}.
\end{itemize}
\end{Def}

Next we state the two main properties from \cite{kessebohmer_strong_2017}.
Under this setting we will prove in the next section an intermediately trimmed strong law 
for St.\ Petersburg type distribution functions.
\begin{Def}[Property $\mathfrak{C}$, {\cite[Definition 1.1]{kessebohmer_strong_2017}}]\label{def: prop C}
Let $\left(\Omega, \mathcal{A},  T, \mu\right)$ be a dynamical system with $ T$ a non-singular transformation and $\widehat{ T}:\mathcal{L}^1\to \mathcal{L}^1$ be the transfer operator of $ T$, 
i.e.\ the uniquely defined operator such that for all $f\in\mathcal{L}^1$ and $g\in\mathcal{L}^{\infty}$ we have
\begin{align}
\int \widehat{ T}f\cdot g\mathrm{d}\mu=\int f\cdot g\circ T\mathrm{d}\mu,\label{hat CYRI}
\end{align}
see e.g. \cite[Section 2.3]{MR3585883} for further details.
Furthermore, let $\mathcal{F}$ be subset of the measurable functions forming a Banach algebra with respect to the  norm $\left\|\cdot\right\|$.
We say that $\left(\Omega, \mathcal{A}, T , \mu,\mathcal{F},\left\|\cdot\right\|\right)$ has Property $\mathfrak{C}$ if the following conditions hold:
\begin{itemize}
\item
$\mu$ is a $T $-invariant, mixing probability measure.
\item
$\mathcal{F}$ contains the constant functions and for all $f\in\mathcal{F}$ we have
\begin{align*}
\left\|f\right\|\geq \left|f\right|_{\infty}.
\end{align*}
\item 
$\widehat{T}$ is a bounded linear operator with respect to $\left\|\cdot\right\|$, i.e.\ there {exists} a constant $K_0>0$ such that for all $f\in\mathcal{F}$ we have
\begin{align*}
\left\|\widehat{T}f\right\|\leq K_0\cdot\left\|f\right\|.
\end{align*}
\item 
$\widehat{T}$ has a spectral gap on $\mathcal{F}$ with respect to $\left\|\cdot\right\|$.
\end{itemize}
\end{Def}
The above mentioned property is a widely used setting for dynamical systems. 
\begin{Def}[Property $\mathfrak{D}$, {\cite[Definition 1.2]{kessebohmer_strong_2017}}]\label{def: Prop D}
We say that $\left(\Omega, \mathcal{A}, T , \mu,\mathcal{F},\left\|\cdot\right\|,\chi\right)$ has Property $\mathfrak{D}$ if the following conditions hold:
\begin{itemize}
\item $\left(\Omega, \mathcal{A}, T , \mu,\mathcal{F},\left\|\cdot\right\|\right)$ fulfills Property $\mathfrak{C}$.
\item $\chi\in\mathcal{F}^+\coloneqq\{f\in \mathcal{F}\colon f\geq 0\}$.
\item
For $\upperleft{\ell}{}{\chi}\coloneqq \chi\cdot\mathbbm{1}_{\left\{\chi\leq \ell\right\}}$ there exists
$K_1>0$ such that for all $\ell>0$, 
\begin{align}
\left\|\upperleft{\ell}{}{\chi}\right\|\leq K_1\cdot \ell.\label{C 1}
\end{align}
\item
There exists 
$K_2>0$ such that for all $\ell>0$, 
\begin{align}
\left\|\mathbbm{1}_{\left\{\chi>\ell\right\}}\right\|\leq K_2.\label{C 2} 
\end{align}
\end{itemize}
\end{Def}

\subsection{Trimming results for St.\ Petersburg type distribution functions}\label{gen trimming}
In this section we will prove theorems under the more general setting that the tuple
$\left(\Omega, \mathcal{B},  T, \mu,\mathcal{F},\left\|\cdot\right\|,\chi\right)$ fulfills Property $\mathfrak{D}$.
From Lemma \ref{lem: Prop E implies Prop D} we will see that this can immediately be applied to the setting of subshifts of finite type.

\begin{thm}\label{thm: asymp thm}
Let $\left(\Omega, \mathcal{B},  T, \mu,\mathcal{F},\left\|\cdot\right\|,\chi\right)$ 
fulfill Property $\mathfrak{D}$ and assume that there exists $K_3>0$ such that for all $\ell>0$
\begin{align}
 \left\|\mathbbm{1}_{\{\chi=\ell\}}\right\|\leq K_3.\label{eq: chi = ell}
\end{align}
Additionally let $q\in(0,1)$, $R\in\left(0,1/q\right)$ and $\eta>1/q$ such that
for all $k\in\mathbb{N}$ we have that
$\mu\left(\chi=\eta^k\right)=R\cdot q^{k}$.

Further, let $\left(b_{n}\right)_{n\in\mathbb{N}}$ be a sequence
of natural numbers tending to infinity with $b_{n}=o\left(n\right)$.
If there exists $\psi\in\Psi$ such that 
\begin{align}
\lim_{n\rightarrow\infty}\frac{b_{n}}{\log\psi\left(\left\lfloor \log n\right\rfloor \right)}=\infty,\label{eq: t cond a4}
\end{align}
then there exists a positive valued sequence $\left(d_n\right)_{n\in\mathbb{N}}$ such that 
\begin{align*}
\lim_{n\rightarrow\infty}\frac{\mathsf{S}_n^{b_n}\chi}{d_n}=1\text{ a.s.}
\end{align*}
If additionally to \eqref{eq: t cond a4} there exists $\epsilon>0$ such that
$\left(b_n\right)$ can be written as $b_n=R/(1-q)\cdot q^{k_n}\cdot n+w_n$, 
where $w_n\geq \left(q^{k_n}\cdot n\right)^{1/2+\epsilon}\cdot \log \psi\left(\left\lfloor \log n\right\rfloor\right)^{1/2-\epsilon}$
and $w_n=o\left(q^{k_n}\right)$ and $\left(k_n\right)$ is a sequence of natural numbers,
then $d_n$ can be explicitly given by
\begin{align}
d_{n}=\frac{\eta}{q\cdot \eta-1}\cdot \left(R\cdot q\right)^{-\log \eta/\log q}\cdot \left(1-q\right)^{1+\log \eta/\log q}\cdot  n^{-\log\eta/\log q}\cdot b_n^{1+\log\eta/\log q}.\label{bn psi exp2}
\end{align}
\end{thm}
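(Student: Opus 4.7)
The plan is to reduce the trimmed strong law to a statement about truncated Birkhoff sums $\mathsf{T}_n^{f_n}\chi$, following the general strategy of Section \ref{sec: proof main thm}, and then to handle the ties at the atoms of $\chi$. Since $\chi$ takes values only in the discrete set $\{\eta^k\}_{k\in\mathbb{N}}$, no deterministic threshold places exactly $b_n$ samples above it, and the ``boundary atom'' must be analyzed via spectral-gap control of $\mathbbm{1}_{\{\chi=\eta^k\}}$, which is precisely what the extra hypothesis \eqref{eq: chi = ell} supplies.

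First I would fix a deterministic level $\ell_n\in\mathbb{N}$ satisfying $n\mu(\chi>\eta^{\ell_n})<b_n\leq n\mu(\chi\geq\eta^{\ell_n})$; the closed-form identities $\mu(\chi>\eta^k)=Rq^{k+1}/(1-q)$ and $\mu(\chi\geq\eta^k)=Rq^k/(1-q)$ fix $\ell_n$ up to a bounded deviation (and under the assumption $b_n=R/(1-q)\cdot q^{k_n}n+w_n$ with $w_n=o(q^{k_n})$, this forces $\ell_n=k_n-1$ for large $n$). Writing $M_n\coloneqq\#\{0\leq j<n:\chi\circ T^j>\eta^{\ell_n}\}$ and $N_n\coloneqq\#\{0\leq j<n:\chi\circ T^j=\eta^{\ell_n}\}$, a short combinatorial argument gives the decomposition
\[
\mathsf{S}_n^{b_n}\chi=\mathsf{T}_n^{\eta^{\ell_n-1}}\chi+(M_n+N_n-b_n)\cdot\eta^{\ell_n},
\]
valid on the event $\{M_n<b_n\leq M_n+N_n\}$, which by the concentration results below will hold off a set of summable probability.

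Second, I would invoke the truncated limit theorem of Section \ref{subsec: trunc sums} (applicable under Property $\mathfrak{D}$) to conclude
\[
\frac{\mathsf{T}_n^{\eta^{\ell_n-1}}\chi}{n\int\chi\,\mathbbm{1}_{\{\chi<\eta^{\ell_n}\}}\,\mathrm{d}\mu}\longrightarrow 1\quad\text{a.s.,}
\]
together with analogous concentration statements for $M_n$ and $N_n$ around their means $n\mu(\chi>\eta^{\ell_n})$ and $n\mu(\chi=\eta^{\ell_n})$, respectively. The sequence $\psi\in\Psi$ and condition \eqref{eq: t cond a4} enter through a Borel--Cantelli argument: the spectral-gap bounds $\|\mathbbm{1}_{\{\chi>\eta^k\}}\|\leq K_2$ and $\|\mathbbm{1}_{\{\chi=\eta^k\}}\|\leq K_3$ yield variance estimates for the partial sums of $\mathbbm{1}_{\{\chi>\eta^{\ell_n}\}}\circ T^j$ and $\mathbbm{1}_{\{\chi=\eta^{\ell_n}\}}\circ T^j$ that, normalized by $b_n$, are summable exactly because $b_n/\log\psi(\lfloor\log n\rfloor)\to\infty$. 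Setting
\[
d_n\coloneqq n\int\chi\,\mathbbm{1}_{\{\chi<\eta^{\ell_n}\}}\,\mathrm{d}\mu+\bigl(n\mu(\chi\geq\eta^{\ell_n})-b_n\bigr)\cdot\eta^{\ell_n}
\]
then delivers $\mathsf{S}_n^{b_n}\chi/d_n\to 1$ a.s.

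Third, under the stronger assumption on $b_n$, a direct geometric-sum calculation gives $\int\chi\,\mathbbm{1}_{\{\chi\leq\eta^{k_n-2}\}}\,\mathrm{d}\mu=R(\eta q)^{k_n-1}/(\eta q-1)+O(1)$, so $n\int\chi\,\mathbbm{1}_{\{\chi<\eta^{\ell_n}\}}\,\mathrm{d}\mu$ combined with the deterministic boundary contribution telescopes, via the identity $q^{-\log\eta/\log q}=1/\eta$, into the main term $nR(\eta q)^{k_n}/(\eta q-1)$. Substituting $q^{k_n}=b_n(1-q)/(Rn)\,(1+o(1))$ and writing $(\eta q)^{k_n}=(q^{k_n})^{1+\log\eta/\log q}$ yields \eqref{bn psi exp2}. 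The hypothesis $w_n\geq(q^{k_n}n)^{1/2+\epsilon}\log\psi(\lfloor\log n\rfloor)^{1/2-\epsilon}$ is exactly the margin needed to dominate the standard-deviation fluctuation of $M_n+N_n$ around its mean and to ensure the random boundary term is negligible compared to the main contribution.

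The central obstacle is the tie-handling at $\eta^{\ell_n}$: unlike the regularly varying case, one cannot sharply separate trimming from truncation, and the boundary excess $(M_n+N_n-b_n)\eta^{\ell_n}$ is of the same order as the main term. Hypothesis \eqref{eq: chi = ell} is tailor-made to pin $N_n$ to its mean at the required speed, and the $w_n$-condition ensures the random margin $b_n-M_n$ sits far enough inside $[0,N_n]$ to keep the decomposition stable. Once these are controlled, the rest of the proof proceeds as in Section \ref{subsec: proof asymp thm}, specialized to the atomic St.\ Petersburg distribution.
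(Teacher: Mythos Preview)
Your overall strategy coincides with the paper's: truncate at the boundary atom, control the truncated sum via Theorem \ref{thm: st peterburg} (Property $\bm{A}$), and control the counts $M_n=\mathsf{S}_n\mathbbm{1}_{\{\chi>\eta^{\ell_n}\}}$ and $N_n=\mathsf{S}_n\mathbbm{1}_{\{\chi=\eta^{\ell_n}\}}$ via the spectral-gap concentration of Lemmas \ref{lem: bernoulli} and \ref{lem: bernoulli=} (Properties $\bm{B}$ and $\bm{B'}$). The exact decomposition you write down is essentially the content of Lemma \ref{lem: Prop A B to trimming}, and your computation of the explicit $d_n$ matches the paper's.

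There is, however, a genuine gap in the first part. You fix $\ell_n$ by the \emph{deterministic} condition $n\mu(\chi>\eta^{\ell_n})<b_n\leq n\mu(\chi\geq\eta^{\ell_n})$ and then assert that $\{M_n<b_n\leq M_n+N_n\}$ holds off a set of summable probability. This is not true in general: nothing prevents $b_n-n\mu(\chi>\eta^{\ell_n})$ from being $O(1)$, and then $\mu(M_n\geq b_n)$ is bounded away from zero (the fluctuation of $M_n$ is of order $c(b_n,n)\gg 1$). On that event your decomposition breaks down and some surviving summands exceed $\eta^{\ell_n}$. The concentration lemmas give $|M_n-\mathbb{E}M_n|\leq\gamma_n$ and $|N_n-\mathbb{E}N_n|\leq\gamma_n'$ eventually a.s., but to force $M_n<b_n\leq M_n+N_n$ you would need the margins $b_n-n\mu(\chi>\eta^{\ell_n})>\gamma_n$ and $n\mu(\chi\geq\eta^{\ell_n})-b_n>\gamma_n+\gamma_n'$, neither of which follows from your choice of $\ell_n$.

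The paper resolves this by building the fluctuation margin into the truncation level itself: it sets $f_n\coloneqq F^{\leftarrow}\bigl(1-(b_n-V\cdot c(b_n,n))/n\bigr)$, so that automatically $b_n-n\mu(\chi>f_n)\geq V\cdot c(b_n,n)\geq\gamma_n$, and then shows (using $\mu(\chi=f_n)\asymp\mu(\chi>f_n)$) that $r_n\coloneqq b_n-n\mu(\chi>f_n)$ also satisfies the upper bound in \eqref{eq: restr rn}. With this choice the sandwich argument of Lemma \ref{lem: Prop A B to trimming} goes through without ever needing your exact identity to hold on a good set. Your proposal can be repaired by the same device: shift $b_n$ by $V\cdot c(b_n,n)$ before reading off $\ell_n$, so that the boundary atom always sits safely inside the fluctuation window.
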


\subsection{General approach to the proof of Theorem \ref{thm: asymp thm}}\label{sec: proof main thm}
The proof of Theorem \ref{thm: asymp thm} is similar to the general structure given in \cite[Lemma 2.3]{kessebohmer_strong_2017}. 
However, due to the nature of having ties some additional attention is needed.

The first property considers the sum of truncated random variables.
Namely, for $\chi:\Omega\to\mathbb{R}_{\geq 0}$ 
and for a real valued sequence 
$\left(f_{n}\right)_{n\in\mathbb{N}}$ we let 
\begin{align*}
\mathsf{T}_n^{f_n}\chi&\coloneqq\upperleft{f_n}{}{\chi}+\upperleft{f_n}{}{\chi}\circ  T+\ldots +\upperleft{f_n}{}{\chi}\circ  T^{n-1}
\end{align*}
denote the corresponding truncated sum process.

Next, we will give some properties under which an intermediately trimmed strong law can be established.
\begin{Def}[{\cite[Definition 2.1]{kessebohmer_strong_2017}}]
 Let $\left(\Omega, \mathcal{A}, T , \mu,\mathcal{F},\left\|\cdot\right\|,\chi\right)$ fulfill Property $\mathfrak{D}$.
 We say that $(f_n)$ fulfills Property $\bm{A}$ for the system $\left(\Omega, \mathcal{A}, T , \mu,\mathcal{F},\left\|\cdot\right\|,\chi\right)$ if 
\begin{align*}
 \lim_{n\to\infty}\frac{\mathsf{T}_n^{f_n}\chi}{\int\mathsf{T}_n^{f_n}\chi\mathrm{d}\mu}=1\text{ a.s.}
\end{align*}
\end{Def}

The second property deals with the average number of large entries and is defined as follows:
\begin{Def}[{\cite[Definition 2.2]{kessebohmer_strong_2017}}]
 Let $\left(\Omega, \mathcal{A}, T , \mu,\mathcal{F},\left\|\cdot\right\|,\chi\right)$ fulfill Property $\mathfrak{D}$.
 We say that a tuple $((f_n),(\gamma_n))$ fulfills Property $\bm{B}$ for the system $\left(\Omega, \mathcal{A}, T , \mu,\mathcal{F},\left\|\cdot\right\|,\chi\right)$ if 
\begin{align*}
 \mu\left(\left|\sum_{i=1}^{n}\left(\mathbbm{1}_{\left\{\chi> f_n\right\}}\circ T^{i-1}-\mu\left(\chi>f_n\right)\right)\right|\geq \gamma_n\text{ i.o.}\right)=0.
\end{align*}
\end{Def}
A similar property will be given as follows:
\begin{Def}
 Let $\left(\Omega, \mathcal{A}, T , \mu,\mathcal{F},\left\|\cdot\right\|,\chi\right)$ fulfill Property $\mathfrak{D}$.
 We say that a tuple $((f_n),(\gamma_n'))$ fulfills Property $\bm{B'}$ for the system $\left(\Omega, \mathcal{A}, T , \mu,\mathcal{F},\left\|\cdot\right\|,\chi\right)$ if 
\begin{align*}
 \mu\left(\left|\sum_{i=1}^{n}\left(\mathbbm{1}_{\left\{\chi= f_n\right\}}\circ T^{i-1}-\mu\left(\chi=f_n\right)\right)\right|\geq \gamma_n'\text{ i.o.}\right)=0.
\end{align*}
\end{Def}

The following lemma is an extension of \cite[Lemma 2.3]{kessebohmer_strong_2017} giving an extra consideration 
to the appearance of ties.

\begin{lemma}\label{lem: Prop A B to trimming}
Let $\left(\Omega, \mathcal{A}, T , \mu,\mathcal{F},\left\|\cdot\right\|,\chi\right)$ fulfill Property $\mathfrak{D}$.
For the system $\left(\Omega, \mathcal{A}, T , \mu,\mathcal{F},\left\|\cdot\right\|,\chi\right)$ 
let further $(f_n)$ fulfill Property $\bm{A}$, let $((f_n),(\gamma_n))$ fulfill Property $\bm{B}$,
and let $((f_n),(\gamma_n'))$ fulfill Property $\bm{B'}$.
For fixed $w>0$ let 
\begin{align}
 r_n\in \left[\gamma_n,n\cdot \mu(\chi=f_n)+w(\gamma_n+\gamma'_n)\right],\label{eq: restr rn}
\end{align}
for all $n\in\mathbb{N}$.
If 
\begin{align}
 \limsup_{n\to\infty}\frac{\int \upperleft{f_n}{}{\chi}\mathrm{d}\mu}{\mu\left(\chi=f_n\right)\cdot f_n}>1\label{eq: cond chi=fn}
\end{align}
and
 \begin{align}
  \lim_{n\to\infty}\frac{\max\{\gamma_n, \gamma_n'\}\cdot f_n}{\int\mathsf{T}_n^{f_n}\chi\mathrm{d}\mu}=0\label{eq: fn gamman}
 \end{align}
hold, then  we have for $b_n\coloneqq \left\lceil n\cdot \mu\left(\chi>f_n\right)+r_n\right\rceil$ 
   that
\begin{align*}
 \lim_{n\to\infty}\frac{\mathsf{S}_n^{b_n}\chi}{\int\mathsf{T}_n^{f_n}\chi\mathrm{d}\mu-r_n\cdot f_n}=1\text{ a.s.}
\end{align*}
\end{lemma}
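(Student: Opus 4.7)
The strategy is to compare the trimmed sum $\mathsf{S}_n^{b_n}\chi$ with the truncated sum $\mathsf{T}_n^{f_n}\chi$ by a careful bookkeeping of entries at the truncation level $f_n$, and then to invoke Property $\bm{A}$ to replace $\mathsf{T}_n^{f_n}\chi$ by its expectation. First I would introduce the empirical counts $M_n^{>}\coloneqq\#\{1\leq i\leq n:\chi\circ T^{i-1}>f_n\}$ and $M_n^{=}\coloneqq\#\{1\leq i\leq n:\chi\circ T^{i-1}=f_n\}$. Properties $\bm{B}$ and $\bm{B'}$ ensure that $|M_n^{>}-n\mu(\chi>f_n)|<\gamma_n$ and $|M_n^{=}-n\mu(\chi=f_n)|<\gamma_n'$ almost surely for all sufficiently large $n$. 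The lower bound $r_n\geq\gamma_n$ in \eqref{eq: restr rn} combined with the definition of $b_n$ yields $b_n\geq n\mu(\chi>f_n)+\gamma_n\geq M_n^{>}$ almost surely eventually, so when we delete the $b_n$ largest values of $(\chi\circ T^{i-1})_{1\leq i\leq n}$, every entry exceeding $f_n$ is removed, together with $k_n\coloneqq b_n-M_n^{>}$ further entries, each of value at most $f_n$.

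Since the largest values not exceeding $f_n$ are precisely those equal to $f_n$, the sum of the $k_n$ extra removed entries differs from $k_n f_n$ by at most $(k_n-M_n^{=})^{+}f_n$. Combining the upper bound $r_n\leq n\mu(\chi=f_n)+w(\gamma_n+\gamma_n')$ with the deviation estimates above yields $(k_n-M_n^{=})^{+}=O(\gamma_n+\gamma_n')$ almost surely eventually and $k_n=r_n+O(\gamma_n)$. Decomposing $\mathsf{S}_n\chi=\mathsf{T}_n^{f_n}\chi+\sum_{i=1}^n \chi\circ T^{i-1}\,\mathbbm{1}_{\{\chi\circ T^{i-1}>f_n\}}$ and subtracting the contribution of the top $b_n$ entries then gives
\[
\mathsf{S}_n^{b_n}\chi=\mathsf{T}_n^{f_n}\chi-r_n f_n+O\!\bigl((\gamma_n+\gamma_n')f_n\bigr)\quad\text{a.s.}
\]
Property $\bm{A}$ replaces $\mathsf{T}_n^{f_n}\chi$ by $(1+o(1))\int\mathsf{T}_n^{f_n}\chi\,\mathrm{d}\mu$ almost surely, and hypothesis \eqref{eq: fn gamman} makes the additive error $o\!\bigl(\int\mathsf{T}_n^{f_n}\chi\,\mathrm{d}\mu\bigr)$. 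Condition \eqref{eq: cond chi=fn} together with the upper bound on $r_n$ ensures that the normalizer $\int\mathsf{T}_n^{f_n}\chi\,\mathrm{d}\mu-r_n f_n$ remains of the same order as $\int\mathsf{T}_n^{f_n}\chi\,\mathrm{d}\mu$, so dividing through yields the claimed almost sure limit~$1$.

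The main obstacle I foresee is precisely this bookkeeping at the truncation level: without Property $\bm{B'}$ one cannot control how many entries equal to $f_n$ are among the top $b_n$, and hence cannot bound the residual $(k_n-M_n^{=})^{+}$ whenever $r_n$ lies near the upper end of the admissible interval \eqref{eq: restr rn}. This is precisely the refinement over \cite[Lemma 2.3]{kessebohmer_strong_2017} that the lemma advertises; the remainder of the argument is bounded-error arithmetic coupled with Property $\bm{A}$.
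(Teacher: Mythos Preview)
Your proposal is correct and follows essentially the same route as the paper's proof. The only difference is presentational: you introduce the explicit counts $M_n^{>}$, $M_n^{=}$ and the excess $k_n=b_n-M_n^{>}$, whereas the paper argues directly in terms of the ordered values $\chi\circ T^{\pi(k)}$; in both cases Properties $\bm{B}$ and $\bm{B'}$ yield the two-sided bound $\mathsf{S}_n^{b_n}\chi=\mathsf{T}_n^{f_n}\chi-r_nf_n+O((\gamma_n+\gamma_n')f_n)$ almost surely, after which Property $\bm{A}$ together with \eqref{eq: cond chi=fn} and \eqref{eq: fn gamman} finishes the argument exactly as you describe.
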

\begin{proof}
We can conclude from Property $\bm{B}$ that a.s.\ 
\begin{align*}
\mathsf{S}_{n}^{\left\lceil n\cdot \mu\left(\chi>f_n\right)+\gamma_n\right\rceil}\chi\leq \mathsf{T}_{n}^{f_{n}}\chi\text{ eventually.}
\end{align*}
Furthermore, Property $\bm{B}$ in conjunction with Property $\bm{B'}$ implies that a.s.\ eventually we have for all 
\begin{align*}
 k &\leq n\cdot \mu\left(\chi\geq f_n\right)-\gamma_n-\gamma_n'
\end{align*}
that $\chi\circ T^{\pi(k)}\geq f_n$.
This implies that a.s.\
\begin{align*}
 \mathsf{S}_{n}^{b_n}\chi\leq \mathsf{T}_{n}^{f_{n}}\chi-\min\left\{r_n, n\cdot \mu\left(\chi=f_n\right)-2\gamma_n-\gamma_n'\right\}\cdot f_n\;\;\;\text{ eventually.}
 \end{align*}
By the restriction of $r_n$ in \eqref{eq: restr rn} we have
\begin{align*}
 n\cdot \mu\left(\chi=f_n\right)-2\gamma_n-\gamma_n'\geq r_n-\left(2+w\right)\left(\gamma_n+\gamma_n'\right).
\end{align*}
This implies that we have a.s.\
\begin{align}
 \mathsf{S}_{n}^{b_n}\chi\leq \mathsf{T}_{n}^{f_{n}}\chi-\left(r_n-\left(2+w\right)\left(\gamma_n+\gamma_n'\right)\right)\cdot f_n\;\;\;\text{ eventually.}\label{San Tn1}
\end{align}

On the other hand, since ${^{f_n}}\chi\leq f_{n}$ it follows by
Property $\bm{B}$ that a.s.\ 
\begin{align}
\mathsf{T}_{n}^{f_{n}}\chi-\left(r_n+2\gamma_n\right)\cdot f_{n}\leq \mathsf{S}_{n}^{b_n}\chi\text{ eventually.}\label{Sbn >}
\end{align}

Combining \eqref{San Tn1} and \eqref{Sbn >} yields that a.s.\
\begin{align}
 \MoveEqLeft\frac{\mathsf{T}_{n}^{f_{n}}\chi-r_n\cdot f_n-2\gamma_n\cdot f_{n}}{\int\mathsf{T}_{n}^{f_{n}}\chi\mathrm{d}\mu-r_n\cdot f_n}\notag\\
 &\leq \frac{\mathsf{S}_{n}^{b_n}\chi}{\int\mathsf{T}_{n}^{f_{n}}\chi\mathrm{d}\mu-r_n\cdot f_n}\notag\\
 &\leq \frac{\mathsf{T}_{n}^{f_{n}}\chi-r_n\cdot f_n+\left(2+w\right)\cdot\left(\gamma_n+\gamma_n'\right)\cdot f_n}{\int\mathsf{T}_{n}^{f_{n}}\chi\mathrm{d}\mu-r_n\cdot f_n}\;\;\;\text{ eventually.}\label{eq: Snbn/Tntn-rn}
\end{align}
Using \eqref{eq: cond chi=fn} and \eqref{eq: fn gamman} together with the range of $r_n$ given in \eqref{eq: restr rn}
yields 
\begin{align*}
 \lim_{n\to\infty}\frac{2\gamma_n\cdot f_{n}}{\int\mathsf{T}_{n}^{f_{n}}\chi\mathrm{d}\mu-r_n\cdot f_n}
 =\lim_{n\to\infty}\frac{\left(2+w\right)\cdot\left(\gamma_n+\gamma_n'\right)\cdot f_n}{\int\mathsf{T}_{n}^{f_{n}}\chi\mathrm{d}\mu-r_n\cdot f_n}
 =0.
\end{align*}
Combining this with Property $\bm{A}$ and \eqref{eq: Snbn/Tntn-rn}
gives the statement of the lemma.
\end{proof}

In the next section we give a statement under which conditions Property $\bm{A}$ holds.

\subsection{Truncated random variables for St.\ Petersburg type distribution functions}\label{subsec: trunc sums}
We will first state a strong limit law for the truncated sum $\mathsf{T}_n^{f_n}\chi$.
\begin{thm}\label{thm: st peterburg}
Let $\left(\Omega, \mathcal{B},  T, \mu,\mathcal{F},\left\|\cdot\right\|,\chi\right)$ fulfill Property $\mathfrak{D}$.
For given $\eta>1$ 
assume $\mu\left(\chi=\eta^k\right)=R\cdot q^k$ with $q\in\left(1/\eta,1\right)$ and $R<1/q$, 
for all $k\in\mathbb{N}$. 
Let $\left(f_n\right)_{n\in\mathbb{N}}$ be a positive valued sequence with 
$F\left(f_n\right)>0$, for all $n\in\mathbb{N}$, and $\lim_{n\to\infty} f_n=\infty$.
If there exists $\psi\in\Psi$ such that
\begin{align}
 f_{n}^{-\log q/\log \eta}& =o\left(\frac{n}{\log\psi\left(\left\lfloor \log n\right\rfloor\right)}\right),\label{cond plateau}
\end{align}
then
\begin{align}
\lim_{n\to\infty}\frac{\mathsf{T}_{n}^{f_{n}}\chi}{\int\mathsf{T}_n^{f_n}\chi\mathrm{d}\mu}=1\text{ a.s.}\label{eq: limit peterburg}
\end{align}
\end{thm}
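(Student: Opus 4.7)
My plan is to prove \eqref{eq: limit peterburg} via a Bernstein-type concentration inequality derived from the spectral gap, followed by Borel--Cantelli along a geometric subsequence and a gap-filling step.

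Set $\alpha:=-\log q/\log\eta\in(0,1)$ and $K_n:=\lfloor\log f_n/\log\eta\rfloor$. Since $\eta q>1$ and $\eta^{2}q>1$, the distributional hypothesis yields the geometric-series asymptotics
\begin{align*}
\int\upperleft{f_n}{}{\chi}\,\rmd\mu=R\sum_{k=1}^{K_n}(\eta q)^{k}\sim c_1 f_n^{1-\alpha},\qquad \int(\upperleft{f_n}{}{\chi})^{2}\,\rmd\mu\sim c_2 f_n^{2-\alpha},
\end{align*}
so $\int \mathsf{T}_n^{f_n}\chi\,\rmd\mu\sim c_1 nf_n^{1-\alpha}$. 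Write $g_n:=\upperleft{f_n}{}{\chi}-\int\upperleft{f_n}{}{\chi}\,\rmd\mu$; by \eqref{C 1} and Property $\mathfrak{C}$, $\|g_n\|\leq Cf_n$ and $\int|g_n|\,\rmd\mu\leq Cf_n^{1-\alpha}$. Since $\int g_n\,\rmd\mu=0$, the spectral projection $P$ vanishes on $g_n$, and the decomposition $\widehat{T}=P+N$ with $\rho(N)<1$ yields for $j\geq 1$
\begin{align*}
\left|\int g_n\cdot g_n\circ T^{j}\,\rmd\mu\right|=\left|\int N^{j}g_n\cdot g_n\,\rmd\mu\right|\leq \|N^{j}g_n\|\int|g_n|\,\rmd\mu\leq C\rho^{j}f_n^{2-\alpha}.
\end{align*}
Summing gives $\var(\mathsf{T}_n^{f_n}\chi)\leq Cnf_n^{2-\alpha}$. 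Perturbing $\widehat{T}$ by $e^{sg_n}$ (valid for small $s$ because $g_n$ is bounded and lies in the Banach algebra $\mathcal{F}$) then produces the Bernstein-type concentration
\begin{align*}
\mu\!\left(\left|\frac{\mathsf{T}_n^{f_n}\chi}{\int \mathsf{T}_n^{f_n}\chi\,\rmd\mu}-1\right|>\epsilon\right)\leq C\exp\!\left(-c\epsilon^{2}\frac{n}{f_n^{\alpha}}\right).
\end{align*}

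Next, I apply Borel--Cantelli along $n_k:=\lceil e^{k}\rceil$, so that $\lfloor\log n_k\rfloor=k$. By hypothesis, for every $L>0$ eventually $n/f_n^{\alpha}\geq L\log\psi(\lfloor\log n\rfloor)$; choosing $L\geq 2/(c\epsilon^{2})$ bounds the $n_k$-th term by $C\psi(k)^{-2}\leq C\psi(k)^{-1}$ for large $k$, which is summable since $\psi\in\Psi$. A diagonal argument over $\epsilon_m\downarrow 0$ then gives $\mathsf{T}_{n_k}^{f_{n_k}}\chi/\int\mathsf{T}_{n_k}^{f_{n_k}}\chi\,\rmd\mu\to 1$ almost surely. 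The main obstacle is the \textbf{gap-filling} step promoting this to all $n$: since the truncation threshold $f_n$ itself varies along $[n_k,n_{k+1}]$, the monotone sandwich $\mathsf{T}_{n_k}^{f_{n_k}}\chi\leq\mathsf{T}_n^{f_n}\chi\leq\mathsf{T}_{n_{k+1}}^{f_{n_{k+1}}}\chi$ (available after replacing $(f_n)$ by a non-decreasing majorant, which preserves the hypothesis) only traps the ratio in a bounded interval around $1$. I resolve this by refining to a denser subsequence with $n_{k+1}/n_k\to 1$, which is affordable because $n/f_n^{\alpha}$ exceeds $\log\psi(\lfloor\log n\rfloor)$ by arbitrarily large factors, so Borel--Cantelli summability survives on the denser grid. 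The detailed bookkeeping then follows the block decomposition in \cite[proof of Lemma~2.3]{kessebohmer_strong_2017}, with the extra care needed for ties afforded by \eqref{eq: chi = ell}.
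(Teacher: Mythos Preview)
Your concentration step and the Borel--Cantelli along $n_k=\lceil e^k\rceil$ are in the right spirit (and parallel the paper's use of the maximal exponential bound, Lemma~\ref{lemma: Tnfn chi deviation}). The genuine gap is the \textbf{gap-filling}, and neither of your two fixes works.

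First, replacing $(f_n)$ by a non-decreasing majorant is \emph{not} without loss of generality: the conclusion~\eqref{eq: limit peterburg} is about $\mathsf{T}_n^{f_n}\chi$ with the original $f_n$, and proving it for the majorant does not give it back for $f_n$. Second, even for monotone $f_n$, refining to $n_{k+1}/n_k\to1$ only controls the $n$-part of the normalization $\int\mathsf{T}_n^{f_n}\chi\,\rmd\mu\sim c_1 n f_n^{1-\alpha}$; you also need $f_{n_{k+1}}/f_{n_k}\to1$, and the hypothesis~\eqref{cond plateau} gives no such control. A sequence $(f_n)$ satisfying~\eqref{cond plateau} can jump from near $1$ to near $(n/\log\psi(\lfloor\log n\rfloor))^{1/\alpha}$ between consecutive grid points, making the sandwich ratio blow up. Your remark that Borel--Cantelli ``survives on the denser grid'' is correct but beside the point: summability is not the obstruction, interpolation is. (The closing appeal to \eqref{eq: chi = ell} and to \cite[Lemma~2.3]{kessebohmer_strong_2017} is also misplaced---those concern the trimmed sum, not $\mathsf{T}_n^{f_n}\chi$.)

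The paper's argument avoids this by exploiting two features you do not use. One is the \emph{maximal} form of the concentration bound (Lemma~\ref{lemma: Tnfn chi deviation} controls $\max_{i\le n}|\mathsf{T}_i^r\chi-\int\mathsf{T}_i^r\chi\,\rmd\mu|$), which handles all $n$ in a dyadic block $I_j=[2^j,2^{j+1}-1]$ at once for a \emph{fixed} threshold $r$. The other, and this is the key idea, is the discreteness of the support of $\chi$: since $\mathsf{T}_n^{f_n}\chi$ depends on $f_n$ only through $\lfloor\log f_n/\log\eta\rfloor$, one can replace the uncontrolled sequence $(f_n)_{n\in I_j}$ by a union over \emph{all} thresholds $\eta^k$ with $k$ in a finite range $[s_j,t_j]$. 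The resulting union bound is then summed as a geometric series in $k$ (see \eqref{reg var 1}), and the total over $j$ is finite by~\eqref{cond plateau}. This decouples the threshold from the specific $(f_n)$ and is what makes the interpolation go through.
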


\begin{Rem}\label{Rem: St Peterburg}
 A similar setting was also studied in \cite{gyorfi_rate_2011} and \cite{nakata_limit_2015}.
 Particularly, \cite[Ex.\ 1.1]{nakata_limit_2015} considers the same distribution function for the i.i.d.\ setting with $q$ 
 restricted to $1/2$.
 A combination of Theorem 1.2 and Corollary 1.1 of \cite{nakata_limit_2015} gives the limit result as in \eqref{eq: limit peterburg}
 but imposes a stronger condition on $(f_n)$ than \eqref{cond plateau}.
\end{Rem}

Next we give some technical results which will help us to prove Theorem \ref{thm: st peterburg} and Theorem \ref{thm: asymp thm}.
\begin{lemma}
Assume that a sequence $(f_n)$ can be written as $f_n=\eta^{k_n}$
with $(k_n)$ a sequence of natural numbers. Then
\begin{align}
 \mu\left(\chi=f_n\right)&=R\cdot q^{\log f_n/\log \eta},\label{eq: mu chi =}\\
 \mu\left(\chi>f_n\right)&=\frac{R\cdot q^{\log f_n/\log \eta+1}}{1-q}.\label{eq: mu chi >}
\end{align} 
If additionally $f_n$ tends to infinity, then
\begin{align}
\int\upperleft{f_n}{}{\chi}\mathrm{d}\mu&\sim \frac{R\cdot q\cdot \eta}{q\cdot\eta-1}\cdot \left(q\cdot\eta\right)^{\log f_n/\log \eta}\label{eq: int chi sim}
\end{align}
\end{lemma}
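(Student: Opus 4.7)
The plan is to verify each of the three identities by direct manipulation of the point-mass distribution $\mu(\chi=\eta^k)=R\cdot q^k$ together with the identity $k_n=\log f_n/\log\eta$ that follows from $f_n=\eta^{k_n}$.

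For \eqref{eq: mu chi =} only a substitution is needed: $\mu(\chi=f_n)=\mu(\chi=\eta^{k_n})=R\cdot q^{k_n}=R\cdot q^{\log f_n/\log\eta}$. For \eqref{eq: mu chi >} I would sum the geometric tail, using that the event $\{\chi>\eta^{k_n}\}$ decomposes disjointly into $\bigcup_{j>k_n}\{\chi=\eta^{j}\}$, giving
\begin{align*}
\mu(\chi>f_n)=\sum_{j=k_n+1}^{\infty}R\cdot q^{j}=\frac{R\cdot q^{k_n+1}}{1-q},
\end{align*}
which upon rewriting $k_n+1=\log f_n/\log\eta+1$ is the stated formula.

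For \eqref{eq: int chi sim} the crucial ingredient is the standing assumption $\eta>1/q$ of Theorem \ref{thm: asymp thm}, which forces $q\eta>1$ and hence makes the partial sums of the corresponding geometric series dominated by their largest term. Concretely, partitioning $\{\chi\leq f_n\}$ into level sets yields
\begin{align*}
\int\upperleft{f_n}{}{\chi}\,\mathrm{d}\mu=\sum_{k\in\mathbb{N},\,\eta^{k}\leq f_n}\eta^{k}\cdot R\cdot q^{k}=R\sum_{k=1}^{k_n}(q\eta)^{k}=\frac{R\cdot q\eta}{q\eta-1}\bigl((q\eta)^{k_n}-1\bigr).
\end{align*}
Since $f_n\to\infty$ forces $k_n\to\infty$ and $q\eta>1$, the constant $-1$ is negligible relative to $(q\eta)^{k_n}\to\infty$, so the ratio of the two sides of \eqref{eq: int chi sim} tends to $1$. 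Writing $(q\eta)^{k_n}=(q\eta)^{\log f_n/\log\eta}$ concludes.

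There is no genuine obstacle here; the only mild subtlety is whether one interprets $\mathbb{N}$ as including $0$, but since the contribution of a potential $k=0$ term would be $O(1)$, while the leading term $(q\eta)^{k_n}$ diverges, this convention has no effect on the asymptotic equivalence.
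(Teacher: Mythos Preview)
Your proof is correct and follows essentially the same route as the paper's own proof: direct substitution for \eqref{eq: mu chi =}, summing the geometric tail for \eqref{eq: mu chi >}, and evaluating the finite geometric sum $\sum_{k=1}^{k_n}(q\eta)^k$ for \eqref{eq: int chi sim}, using $q\eta>1$ and $k_n\to\infty$ to discard the $O(1)$ term. The only cosmetic difference is that the paper writes the partial sum as $R\cdot\bigl((q\eta)^{k_n+1}-q\eta\bigr)/(q\eta-1)$ rather than factoring out $q\eta$ as you do, which is of course equivalent.
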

\begin{proof}
As $f_n=\eta^{\log f_n/\log \eta}$ \eqref{eq: mu chi =} immediately follows.
This also gives
\begin{align*}
 \mu\left(\chi>f_n\right)
 =\sum_{k=\log_{\eta}f_n+1}^{\infty}\mu\left(\chi=\eta^k\right)
 =\sum_{k=\log_{\eta}f_n+1}^{\infty}R\cdot q^k
 =\frac{R\cdot q^{\log f_n/\log \eta +1}}{1-q},
\end{align*}
i.e.\ \eqref{eq: mu chi >} follows.
Finally,
\begin{align*}
\int\upperleft{f_n}{}{\chi}\mathrm{d}\mu
&=\sum_{k=1}^{\log_{\eta}f_n}\mu\left(\chi=\eta^k\right)\cdot\eta^k
=\sum_{k=1}^{\log_{\eta}f_n}R\cdot q^{k}\cdot\eta^k
=R\cdot\frac{\left(q\cdot\eta\right)^{\log_{\eta}f_n+1}-q\cdot\eta}{q\cdot\eta-1}\\
&\sim\frac{R\cdot q\cdot \eta}{q\cdot\eta-1}\cdot \left(q\cdot\eta\right)^{\log f_n/\log\eta}
\end{align*}
giving \eqref{eq: int chi sim}.
\end{proof}

In order to prove Theorem \ref{thm: st peterburg} we will make use of the following Lemma. 
\begin{lemma}[{\cite[Lemma 4.5]{kessebohmer_strong_2017}}]\label{lemma: Tnfn chi deviation}
Let $\left(\Omega, \mathcal{A},  T, \mu,\mathcal{F},\left\|\cdot\right\|,\chi\right)$ fulfill Property $\mathfrak{D}$.
Then there exist constants $N'\in\mathbb{N}$ and $E, K>0$ such that 
for all $\epsilon\in\left(0,E\right)$, $r>0$ with $F(r)>0$, and $n\in\mathbb{N}_{>N'}$ 
\begin{align*}
\mu\left(\max_{i\leq n}\left|\mathsf{T}^{r}_{i}\chi-\int\mathsf{T}_{i}^{r}\chi\mathrm{d}\mu\right|\geq \epsilon\cdot \int\mathsf{T}^{r}_{n}\chi\mathrm{d}\mu\right)
\leq K\cdot\exp\left(-\epsilon \cdot \frac{\int\mathsf{T}^{r}_{n}\chi\mathrm{d}\mu}{r}\right).
\end{align*}
\end{lemma}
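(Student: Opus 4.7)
The plan is to combine a Chernoff-type exponential moment bound obtained from spectral perturbation of $\widehat{T}$ with an Ottaviani-style maximal inequality. First I would introduce the twisted transfer operator $\widehat{T}_z g := \widehat{T}(e^{z\cdot\upperleft{r}{}{\chi}}\, g)$. By \eqref{C 1}, $\|\upperleft{r}{}{\chi}\|\le K_1 r$, so for complex $z$ with $|z|\le z_0/r$ and $z_0$ sufficiently small the multiplier $e^{z\cdot\upperleft{r}{}{\chi}}$ is close to the identity in $\mathcal{F}$ and $\widehat{T}_z$ is a small perturbation of $\widehat{T}$. The spectral gap of $\widehat{T}$ together with Kato's perturbation theorem then yields an analytic family $\widehat{T}_z = \lambda(z)P_z + N_z$ with $\lambda(0)=1$, $\lambda'(0) = \int\upperleft{r}{}{\chi}\, d\mu$, and $\rho(N_z)$ uniformly bounded away from $|\lambda(z)|$. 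Rescaling to the normalized observable $\upperleft{r}{}{\chi}/r$, whose $\mathcal{F}$-norm is bounded by $K_1$ independently of $r$, keeps all constants in this decomposition uniform in $r$.

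The second step combines this with the standard identity $\int e^{z\mathsf{T}^r_n\chi}\, d\mu = \int \widehat{T}_z^{\,n}\mathbbm{1}\, d\mu$ to obtain $\int e^{z\mathsf{T}^r_n\chi}\, d\mu \le C\,\lambda(z)^n$ for $|z|\le z_0/r$. A Taylor expansion yields $\log\lambda(z)\le z\int\upperleft{r}{}{\chi}\, d\mu + C_1 r z^2\int\upperleft{r}{}{\chi}\, d\mu$ in this disc; the coefficient of $z^2$ is the asymptotic variance of $\upperleft{r}{}{\chi}$, bounded by a multiple of $r\int\upperleft{r}{}{\chi}\, d\mu$ because $|\upperleft{r}{}{\chi}|\le r$ and correlations decay geometrically. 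Markov's inequality with $z = c\epsilon/r$ for $c$ small then gives the one-sided Chernoff bound
\[
\mu\!\left(\mathsf{T}^r_n\chi - \int\mathsf{T}^r_n\chi\, d\mu \ge \epsilon\int\mathsf{T}^r_n\chi\, d\mu\right)\le K'\exp\!\left(-\epsilon\cdot \frac{\int\mathsf{T}^r_n\chi\, d\mu}{r}\right),
\]
and the symmetric lower deviation is identical with $z<0$; condition \eqref{C 2} enters here to control the bounded discontinuity of $\upperleft{r}{}{\chi}$ at the cutoff level $r$.

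To pass from this fixed-$n$ estimate to the maximum over $i\le n$, I would invoke an Ottaviani-type argument. Let $\tau$ be the first index $i\le n$ at which $|\mathsf{T}^r_i\chi - \int\mathsf{T}^r_i\chi\, d\mu|$ exceeds $\epsilon\int\mathsf{T}^r_n\chi\, d\mu$; on $\{\tau\le n\}$, a direct transfer-operator estimate of the tail increment $\mathsf{T}^r_n\chi - \mathsf{T}^r_\tau\chi$ shows that with probability bounded below by a universal constant the full deviation at time $n$ is already at least $\epsilon\int\mathsf{T}^r_n\chi\, d\mu/2$. Combining this with the fixed-time bound upgrades the estimate to its maximal form, at the cost of halving $\epsilon$ in the exponent and inflating the constant $K$.

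The main obstacle will be the second step: securing the correct quadratic control on $\log\lambda(z)$ with constants uniform in $r$. The multiplier $e^{z\cdot\upperleft{r}{}{\chi}}$ has $\mathcal{F}$-norm of order $r$, which would naively destroy the uniformity of the spectral decomposition; it is precisely the rescaling $\upperleft{r}{}{\chi}\mapsto\upperleft{r}{}{\chi}/r$ together with \eqref{C 2} that keeps both the perturbation radius and the second-moment bound on the right scale. Everything else—existence of the spectral decomposition, analyticity of $\lambda(\cdot)$, and the Ottaviani promotion—is standard once this uniformity is established.
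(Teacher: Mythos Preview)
The paper does not give its own proof of this lemma; it is imported verbatim from \cite[Lemma 4.5]{kessebohmer_strong_2017}, with only the parenthetical remark that the present formulation follows from the proof given there. Your outline---twisted transfer operator $\widehat{T}_z$, Kato perturbation of the spectral gap to get an analytic leading eigenvalue $\lambda(z)$, a Chernoff bound via $\int e^{z\mathsf{T}_n^r\chi}\,\mathrm{d}\mu = \int \widehat{T}_z^n\mathbbm{1}\,\mathrm{d}\mu$, and then promotion to a maximal inequality---is exactly the Nagaev--Guivarc'h scheme that \cite{kessebohmer_strong_2017} carries out, and your identification of the rescaling $\upperleft{r}{}{\chi}\mapsto \upperleft{r}{}{\chi}/r$ as the device that makes all constants uniform in $r$ is the correct key point.

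Two small comments. First, your invocation of \eqref{C 2} in the Chernoff step is misplaced: only \eqref{C 1} is needed there, since it already bounds $\|\upperleft{r}{}{\chi}\|$ and hence the perturbation size; condition \eqref{C 2} is used elsewhere in \cite{kessebohmer_strong_2017} (for the large-value counting process, cf.\ Lemma \ref{lem: bernoulli}), not for the truncated-sum deviation. Second, the Ottaviani step as you describe it is not quite self-contained: the classical Ottaviani inequality relies on independence of increments, and in the dynamical setting one replaces it by a conditioning argument through the transfer operator (conditioning on the stopping time $\tau$ and using that $\widehat{T}^{n-\tau}$ still has the spectral representation). You gesture at this with ``a direct transfer-operator estimate of the tail increment,'' which is the right idea, but in a full write-up this is the place that needs the most care.
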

(The lemma is stated slightly different in \cite{kessebohmer_strong_2017} but the statement 
in the current version becomes obvious from the proof of the lemma in \cite{kessebohmer_strong_2017}.)

\begin{proof}[Proof of Theorem \ref{thm: st peterburg}]
The proof is very similar to \cite[Proof of Theorem 2.5]{kessebohmer_strong_2017}, 
but as $F$ is not regularly varying the methods can not immediately be transferred. 

We define the sequences $\left(g_{n}\right)_{n\in\mathbb{N}}$ and $\left(\bar{g}_{n}\right)_{n\in\mathbb{N}}$
with $g_{n}\coloneqq\max\left\{ f_{n},n^{1/2}\right\} $
and $\bar{g}_{n}\coloneqq\min\left\{ f_{n},n^{1/2}\right\}$ and prove separately that 
\begin{align}
\mu\left(\left|\mathsf{T}_{n}^{g_{n}}\chi-\int\mathsf{T}_{n}^{g_{n}}\chi\mathrm{d}\mu\right|\geq\epsilon\int\mathsf{T}_{n}^{g_{n}}\chi\mathrm{d}\mu\text{ i.o.}\right)=0\label{eq: S g}
\end{align}
and 
\begin{align}
\mu\left(\left|\mathsf{T}_{n}^{\bar{g}_{n}}\chi-\int\mathsf{T}_{n}^{\bar{g}_{n}}\chi\mathrm{d}\mu\right|\geq\epsilon\int\mathsf{T}_{n}^{\bar{g}_{n}}\chi\mathrm{d}\mu\text{ i.o.}\right)=0\label{eq: S bar g}
\end{align}
hold.

To prove \eqref{eq: S g} we set $I_{j}\coloneqq\left[2^{j},2^{j+1}-1\right]$ for $j\in\mathbb{N}$
and obtain for every $m\in I_{j}$ and $r\in\R_{\geq 0}$ that 
\begin{align}
\left\{ \left|\mathsf{T}_{m}^{r}\chi-\int\mathsf{T}_{m}^{r}\chi\mathrm{d}\mu\right|\geq \epsilon\int\mathsf{T}_{m}^{r}\chi\mathrm{d}\mu\right\}
 & \subset\left\{ \left|\mathsf{T}_{m}^{r}\chi-\int\mathsf{T}_{m}^{r}\chi\mathrm{d}\mu\right|\geq \epsilon\int\mathsf{T}_{2^{j}}^{r}\chi\mathrm{d}\mu\right\} \notag\\
 & \subset\left\{ \left|\mathsf{T}_{m}^{r}\chi-\int\mathsf{T}_{m}^{r}\chi\mathrm{d}\mu\right|\geq\epsilon/2\int\mathsf{T}_{2^{j+1}-1}^{r}\chi\mathrm{d}\mu\right\} \notag\\
 & \subset\left\{ \max_{n\in I_{j}}\left|\mathsf{T}_{n}^{r}\chi-\int\mathsf{T}_{n}^{r}\chi\mathrm{d}\mu\right|\geq\epsilon/2\int\mathsf{T}_{2^{j+1}-1}^{r}\chi\mathrm{d}\mu\right\}.\label{max, nicht max}
\end{align}
Next we define the sequences $\left(s_{j}\right)_{j\in\mathbb{N}}$ and
$\left(t_{j}\right)_{j\in\mathbb{N}}$ as
\begin{align*}
s_{j}\coloneqq\left\lfloor \frac{j\cdot\log2}{2\cdot \log\eta}\right\rfloor 
\text{ and }
t_{j}\coloneqq{\left\lceil \frac{\log\left(\max_{n\in I_{j}}g_{n}\right)}{\log\eta}\right\rceil}.
\end{align*}
These numbers are chosen such that $\left[\eta^{s_j},\eta^{t_j}\right]\supset\left[\min_{n\in I_{j}}g_n,\max_{n\in I_{j}}g_n\right]$.
Furthermore, we know that $f_n\in [\eta^k,\eta^{k+1})$ implies $\mathsf{T}_n^{f_n}\chi=\mathsf{T}_n^{\eta^k}\chi$.
Hence, \eqref{max, nicht max} implies
\begin{align}
 \MoveEqLeft\bigcup_{n\in I_j}\left\{\left|\mathsf{T}_{n}^{g_{n}}\chi-\int\mathsf{T}_{n}^{g_{n}}\chi\mathrm{d}\mu\right|\geq\epsilon\int\mathsf{T}_{n}^{g_{n}}\chi\mathrm{d}\mu\right\}\notag\\
 &\subset\bigcup_{k=s_j}^{t_j}\left\{ \max_{n\in I_{j}}\left|\mathsf{T}_{n}^{\eta^k}\chi-\int\mathsf{T}_{n}^{\eta^k}\chi\mathrm{d}\mu\right|\geq\epsilon/2\int\mathsf{T}_{2^{j+1}-1}^{\eta^k}\chi\mathrm{d}\mu\right\},\label{eq: subset Ij sjtj}
\end{align}
for all $j\in\mathbb{N}$.
In order to estimate the sets on the right hand side of \eqref{eq: subset Ij sjtj} 
we can apply Lemma \ref{lemma: Tnfn chi deviation}
assuming that $\epsilon/2<E$
to the sum $\mathsf{T}_{n}^{\eta^{k}}\chi$ and obtain 
for $j$ sufficiently large 
\begin{align}
\mu\left( \max_{n\in I_{j}}\left|\mathsf{T}_{n}^{\eta^k}\chi-\int\mathsf{T}_{n}^{\eta^k}\chi\mathrm{d}\mu\right|\geq\epsilon/2\int\mathsf{T}_{2^{j+1}-1}^{\eta^k}\chi\mathrm{d}\mu\right)
\leq K\cdot\exp\left(-\epsilon/2\cdot\frac{\int\mathsf{T}_{2^{j+1}-1}^{\eta^{k}}\chi\mathrm{d}\mu}{\eta^{k}}\right).\label{eq: Tnrhok} 
\end{align}
Furthermore, \eqref{eq: int chi sim} implies
\begin{align}
\int\mathsf{T}_{2^{j+1}-1}^{\eta^{k}}\chi\mathrm{d}\mu
\asymp 2^j\cdot \eta^{k\cdot\left(1+\log q/\log\eta\right)}\asymp 2^j\cdot \eta^k\cdot q^k.\label{eq: asymp k j}
\end{align}
Here, we write $a_\ell\asymp b_\ell$ if there exists a constant $c>0$ such that $c^{-1}a_\ell\leq b_\ell\leq c a_\ell$ for all $\ell\in\mathbb{N}$
and in \eqref{eq: asymp k j} $\asymp$ holds both with respect to $k$ and $j$.
Hence, we obtain by \eqref{eq: Tnrhok} that there exists $W>0$ such that
\begin{align*}
\mu\left( \max_{n\in I_{j}}\left|\mathsf{T}_{n}^{\eta^k}\chi-\int\mathsf{T}_{n}^{\eta^k}\chi\mathrm{d}\mu\right|\geq\epsilon/2\int\mathsf{T}_{2^{j+1}-1}^{\eta^k}\chi\mathrm{d}\mu\right)
& \leq K\cdot \exp\left(-\epsilon/2\cdot W\cdot 2^j \cdot q^k\right),
\end{align*}
for all $k\in\mathbb{N}$ and $j$ sufficiently large.
This implies 
\begin{align}
\MoveEqLeft\sum_{k=s_{j}}^{t_{j}}\mu\left( \max_{n\in I_{j}}\left|\mathsf{T}_{n}^{\eta^k}\chi-\int\mathsf{T}_{n}^{\eta^k}\chi\mathrm{d}\mu\right|\geq\epsilon/2\int\mathsf{T}_{2^{j+1}-1}^{\eta^k}\chi\mathrm{d}\mu\right)\notag\\
 & \leq K\cdot\sum_{k=s_{j}}^{t_{j}}\exp\left(-\epsilon/2\cdot W\cdot 2^j \cdot q^k\right)\notag\\
 & \leq K\cdot\sum_{k=0}^{t_{j}}\exp\left(-\epsilon/2\cdot W\cdot 2^j \cdot q^k\right)\notag\\
 & = K\cdot\exp\left(-\epsilon/2\cdot W\cdot 2^j \cdot q^{t_j}\right)\cdot \sum_{k=0}^{t_{j}}\exp\left(-\epsilon/2\cdot W\cdot 2^j \cdot q^{t_j}\cdot\left(1/q^k-1\right)\right)\notag\\
 & \leq K\cdot\exp\left(-\epsilon/2\cdot W\cdot 2^j \cdot q^{t_j}\right)\cdot \sum_{k=0}^{\infty}\exp\left(-\epsilon/2\cdot W\cdot 2^j \cdot q^{t_j}\cdot\left(1/q-1\right)\cdot k\right)\notag\\  
 & = K\cdot \frac{\exp\left(-\epsilon/2\cdot W\cdot 2^j \cdot q^{t_j}\right)}{1-\exp\left(-\epsilon/2\cdot W\cdot 2^j \cdot q^{t_j}\cdot\left(1/q-1\right)\right)}.\label{reg var 1}
\end{align}
To continue we state the following technical lemma which is \cite[Lemma 5]{kessebohmer_strong_2016}.
\begin{lemma}\label{log gamma log tilde gamma}
Let $a,b>1$ and $\psi\in\Psi$. Then there exists $\omega\in\Psi$ such that 
\begin{align*}
\omega\left(\left\lfloor \log_b n\right\rfloor \right)\leq \psi\left(\left\lfloor \log_a n\right\rfloor\right).
\end{align*}
\end{lemma}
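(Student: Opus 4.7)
The plan is to build $\omega$ as a pointwise minimum of $\psi$ over a short interval of indices determined by $a,b$. The key identity is $\log_a n = c\log_b n$, where $c:=\log b/\log a>0$; hence for $n\in\N$ with $\lfloor\log_b n\rfloor=k$ we have $\log_a n\in[ck,c(k+1))$ and consequently
\[
\lfloor\log_a n\rfloor\in J_k:=\Z\cap\bigl[\lfloor ck\rfloor,\lceil c(k+1)\rceil-1\bigr].
\]
I would then set $\omega(k):=\min\{\psi(j):j\in J_k\cap\N\}$ whenever this set is non-empty, and assign an arbitrary positive constant (e.g.\ $\psi(1)$) on the finitely many remaining small indices. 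By construction the required inequality $\omega(\lfloor\log_b n\rfloor)\leq\psi(\lfloor\log_a n\rfloor)$ then holds on the entire range of $n$ for which both sides are defined.

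The heart of the argument is verifying $\omega\in\Psi$, i.e.\ $\sum_k 1/\omega(k)<\infty$. For this I would use a multiplicity count: a fixed integer $j$ belongs to $J_k$ only when $k$ lies in an interval of length $2/c$, so $j$ lies in at most $\lfloor 2/c\rfloor+1$ of the sets $J_k$. Bounding the reciprocal of the minimum by a sum and interchanging the order of summation yields
\[
\sum_{k}\frac{1}{\omega(k)}\leq\sum_{k}\sum_{j\in J_k\cap\N}\frac{1}{\psi(j)}\leq\bigl(\lfloor 2/c\rfloor+1\bigr)\sum_{j}\frac{1}{\psi(j)}<\infty,
\]
which gives $\omega\in\Psi$.

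The only bookkeeping obstacle I foresee is the behavior at small indices, where $J_k$ may contain $0$ or (if $c<1$) may be empty; but this concerns only finitely many $k$ and hence is irrelevant for summability. Both cases $c\geq 1$ and $c<1$ are covered by the same multiplicity argument, with only the constant $\lfloor 2/c\rfloor+1$ changing, so no essential additional difficulty is anticipated beyond the multiplicity count above.
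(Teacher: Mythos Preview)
Your argument is correct and self-contained. The paper itself does not prove this lemma in-text; it simply imports it as \cite[Lemma 5]{kessebohmer_strong_2016}, so there is no proof here to compare against directly. Your construction---defining $\omega(k)$ as the minimum of $\psi$ over the finite index window $J_k$ reached by $\lfloor\log_a n\rfloor$ when $\lfloor\log_b n\rfloor=k$, and then controlling $\sum_k 1/\omega(k)$ via a multiplicity bound---is the natural elementary route and is exactly the kind of argument one would expect in the cited reference.

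One minor numerical correction: the interval of admissible $k$ for a fixed $j$ has length $1+1/c$, not $2/c$. From $j\in J_k$ one gets $ck<j+1$ and $c(k+1)>j$, i.e.\ $k\in(j/c-1,(j+1)/c)$. For $c>1$ your stated length $2/c$ is actually smaller than the true length, so the constant $\lfloor 2/c\rfloor+1$ should be replaced by $\lfloor 1+1/c\rfloor+1$ (or simply $2+\lceil 1/c\rceil$). This is purely cosmetic: any finite uniform multiplicity bound suffices, and the rest of the argument---including the handling of finitely many small indices where $J_k\cap\mathbb{N}$ may be empty---goes through unchanged.
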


Noticing that 
\begin{align*}
2^j\cdot q^{t_j}
&\geq 2^j\cdot \max_{n\in I_j} g_n^{\frac{\log q}{\log \eta}}\cdot q
\geq \min_{n\in I_j}\left(n\cdot g_n^{\frac{\log q}{\log \eta}}\right)\cdot \frac{q}{2}
\geq \min_{n\in I_j}\left(n\cdot f_n^{\frac{\log q}{\log \eta}}\right)\cdot \frac{q}{2}
\end{align*}
and using condition \eqref{cond plateau} together with Lemma \ref{log gamma log tilde gamma}
implies that there exists $\psi\in \Psi$ such that 
\begin{align*}
 \lim_{j\to\infty}\frac{\epsilon/2\cdot W\cdot 2^j\cdot q^{t_j}}{\log\psi\left( j\right)}=\infty.
\end{align*}
Inserting this into the calculation in \eqref{reg var 1} yields
\begin{align*}
\sum_{k=s_{j}}^{t_{j}}\mu\left( \max_{n\in I_{j}}\left|\mathsf{T}_{n}^{\eta^k}\chi-\int\mathsf{T}_{n}^{\eta^k}\chi\mathrm{d}\mu\right|\geq\epsilon/2\int\mathsf{T}_{2^{j+1}-1}^{\eta^k}\chi\mathrm{d}\mu\right)
 &\leq K\cdot \frac{\exp\left(- \log\psi\left( j\right)\right)}{2}
 =K\cdot \frac{\psi(j)}{2}, 
\end{align*}
for $j$ sufficiently large.
Using \eqref{eq: subset Ij sjtj}, summing over the above quantity with respect to $j$, using the fact that $\psi\in \Psi$, and applying the Borel-Cantelli lemma yields \eqref{eq: S g}.

On the other hand using Lemma \ref{lemma: Tnfn chi deviation} again yields 
\begin{align*}
 \mu\left( \left|\mathsf{T}_{n}^{\bar{g}_n}\chi-\int\mathsf{T}_{n}^{\bar{g}_n}\chi\mathrm{d}\mu\right|\geq\epsilon\int\mathsf{T}_{n}^{\bar{g}_n}\chi\mathrm{d}\mu\right)
&\leq K\cdot\exp\left(-\epsilon\cdot\frac{\int\mathsf{T}_{n}^{\bar{g}_n}\chi\mathrm{d}\mu}{\bar{g}_n}\right)\\
&= K\cdot\exp\left(-\epsilon\cdot n\cdot \frac{\int\upperleft{\bar{g}_{n}}{}{\chi}\mathrm{d}\mu}{\bar{g}_{n}}\right)\\
&\leq K\cdot\exp\left(-\epsilon\cdot \frac{n}{\bar{g}_n}\right)
\leq K\cdot\exp\left(-\epsilon\cdot n^{1/2}\right),
\end{align*}
for $\epsilon\in\left(0,E\right)$ and $n$ sufficiently large.
Summing over the above quantity and using the Borel-Cantelli lemma yields that for all $\epsilon>0$ \eqref{eq: S bar g} holds
giving the statement of the theorem.
\end{proof}

\subsection{Proof of Theorem \ref{thm: asymp thm}}\label{subsec: proof asymp thm}
We start with two lemmas regarding Properties $\bm{B}$ and $\bm{B'}$. 
Before stating them we define
\begin{align}
c\left(k,n\right)\coloneqq c_{\epsilon,\psi}\left(k,n\right)\coloneqq\left(\max\left\{ k,\log\psi\left(\left\lfloor \log n\right\rfloor \right)\right\} \right)^{1/2+\epsilon}\cdot\left(\log\psi\left(\left\lfloor \log n\right\rfloor \right)\right)^{1/2-\epsilon},\label{c(n)}
\end{align}
for $k\in\mathbb{R}_{\geq1}$, $n\in\mathbb{N}_{\geq3}$,  $0<\epsilon<1/4$,
and $\psi\in\Psi$.

\begin{lemma}[{{\cite[Lemma 2.8]{kessebohmer_strong_2017}}}]\label{lem: bernoulli}
Let $\left(u_n\right)_{n\in\mathbb{N}}$ be a positive valued sequence
and define $p_n\coloneqq\mu\left(\chi> u_n\right)$.
Then there exist constants $N,V>0$ such that
for all $0<\epsilon<1/4$, $\psi\in\Psi$, $n\in\N_{>N}$, and $\left(u_n\right)$ positive valued we have
\begin{align*}
\mu\left(\left\{ \left|\mathsf{S}_n\mathbbm{1}_{\left\{\chi>u_n\right\}}-p_n\cdot n\right|\geq V\cdot c_{\epsilon,\psi}\left(p_{n}\cdot n,n\right)\text{ i.o.}\right\} \right)=0.
\end{align*}
\end{lemma}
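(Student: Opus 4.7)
The plan is to combine a Bernstein-type large deviation estimate for Birkhoff sums of mean-zero observables having uniformly controlled $\|\cdot\|$-norm with a dyadic Borel--Cantelli argument, in the spirit of the proof of Theorem~\ref{thm: st peterburg}. Setting $h_n \coloneqq \mathbbm{1}_{\{\chi > u_n\}} - p_n$, the quantity of interest is $\mathsf{S}_n h_n$, which has zero mean. By Property~$\mathfrak{D}$, in particular \eqref{C 2}, $\|h_n\| \le K_2 + 1$ uniformly in $n$, while $\left|h_n\right|_\infty \le 1$ and $\int h_n^2\,\mathrm{d}\mu \le p_n$.

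The main technical input is a Bernstein-type deviation inequality
\begin{align*}
\mu\!\left(\max_{m\le n}\left|\mathsf{S}_m h_n\right| \ge t\right) \le K \exp\!\left(-c\min\!\left\{\frac{t^2}{n p_n + 1},\, t\right\}\right),
\end{align*}
with constants $K,c>0$ depending only on the spectral data of $\widehat{T}$ on $\mathcal{F}$ and on the uniform bounds on $\|h_n\|$ and $\left|h_n\right|_\infty$, and in particular \emph{not} on $u_n$. This is obtained by a Nagaev--Guivarc'h style perturbation argument applied to the twisted transfer operator $\widehat{T}_z f \coloneqq \widehat{T}(e^{z h_n} f)$; the spectral gap of $\widehat{T}$ on $\mathcal{F}$ guaranteed by Property~$\mathfrak{C}$ persists under small analytic perturbations, giving control of the Laplace transform of $\mathsf{S}_n h_n$, from which the stated inequality follows by optimising in $z$. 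This is precisely the same mechanism underlying Lemma~\ref{lemma: Tnfn chi deviation}.

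With this at hand, the conclusion is a dyadic Borel--Cantelli argument. Setting $I_j \coloneqq [2^j,2^{j+1})$ and $\Lambda_j \coloneqq \log\psi(\lfloor \log 2^j\rfloor)$, one splits on each block according to the two regimes hidden in the definition of $c_{\epsilon,\psi}$: the case $p_n n \ge \Lambda_j$ and the case $p_n n < \Lambda_j$. In the first case $t = V c_{\epsilon,\psi}(p_n n, n)$ satisfies $t^2/(n p_n + 1) \gtrsim V^2 \Lambda_j$; in the second case $t \asymp V \Lambda_j$ dominates $np_n$, and the linear term $t$ inside the minimum produces the same bound $\gtrsim V \Lambda_j$. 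Choosing $V$ sufficiently large, the right-hand side of the deviation inequality is at most $K\psi(j)^{-\lambda}$ for some $\lambda>1$, which is summable in $j$ because $\psi\in\Psi$. Lemma~\ref{log gamma log tilde gamma} is used to pass freely between the $\log_2$-indexing along the dyadic grid and the $\log$-indexing inside $c_{\epsilon,\psi}$. The $\max_{m\le n}$ version of the deviation estimate, together with the monotonicity of $c_{\epsilon,\psi}(p_n n, n)$ in $n$ on each $I_j$, transfers the dyadic conclusion to an arbitrary $n\in I_j$.

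The main obstacle I expect is the derivation of the Bernstein inequality with constants uniform over the family $\{h_n\}_{n\in\mathbb{N}}$: one must check that the Nagaev perturbation series only picks up quantities controlled by $\|h_n\|$ and $\left|h_n\right|_\infty$, so the exponent does not degrade as $u_n$ varies. Once this is in place, the rest is a careful but routine case analysis dictated by the two regimes in the definition of $c_{\epsilon,\psi}$.
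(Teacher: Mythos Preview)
The paper does not prove this lemma; it is cited verbatim as \cite[Lemma~2.8]{kessebohmer_strong_2017}, and the companion Lemma~\ref{lem: bernoulli=} is dispatched by saying its proof is ``mainly the same''. So there is no in-paper argument to compare against beyond the citation, and your outline---a Bernstein-type bound obtained from the spectral gap via Nagaev--Guivarc'h perturbation, followed by a dyadic Borel--Cantelli argument split along the two regimes built into $c_{\epsilon,\psi}$---is the correct architecture and is undoubtedly what the cited proof does. Your identification of the delicate point (uniformity of the deviation constants over the family $\{h_n\}$, coming from the uniform bound $\|h_n\|\le K_2+1$ in \eqref{C 2}) is also on target.

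There is, however, a genuine gap in your last step. You write that ``the $\max_{m\le n}$ version of the deviation estimate, together with the monotonicity of $c_{\epsilon,\psi}(p_n n,n)$ in $n$ on each $I_j$, transfers the dyadic conclusion to an arbitrary $n\in I_j$''. Two problems: first, $p_n=\mu(\chi>u_n)$ depends on the \emph{arbitrary} sequence $(u_n)$, so $p_n n$ is not monotone in $n$; second, and more seriously, the observable $h_n=\mathbbm{1}_{\{\chi>u_n\}}-p_n$ itself changes with $n$, so a maximal inequality for a \emph{fixed} $h$ does not control the union $\bigcup_{n\in I_j}\{|\mathsf{S}_n h_n|\ge t_n\}$. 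A crude union bound over $n\in I_j$ costs a factor $2^j$, which the exponent $c V^2\Lambda_j$ cannot absorb for general $\psi\in\Psi$ (take $\psi(j)=j^2$). The fix is an additional dyadic discretisation \emph{in the level} $u$: within each $I_j$ one partitions the possible values of $p_n$ into $O(j)$ geometric shells, uses monotonicity of $u\mapsto \mathsf{S}_n\mathbbm{1}_{\{\chi>u\}}$ and of $u\mapsto\mu(\chi>u)$ to reduce to the shell endpoints, and applies the maximal inequality once per shell with a fixed observable. The exponent $1/2+\epsilon$ (rather than $1/2$) in the definition of $c_{\epsilon,\psi}$ is precisely what makes the resulting double sum over $j$ and shells converge: in the shell with $p_n n\approx 2^i\ge\Lambda_j$ the Gaussian exponent is $\gtrsim V^2\,2^{2\epsilon i}\Lambda_j^{1-2\epsilon}$, which decays fast enough in $i$ to sum to $O(\psi(j)^{-cV^2})$. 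This level discretisation is the one substantive ingredient missing from your sketch.
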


\begin{lemma}\label{lem: bernoulli=}
Let $\left(v_n\right)_{n\in\mathbb{N}}$ be a positive valued sequence
and define $p_n'\coloneqq\mu\left(\chi= v_n\right)$.
Then there exist constants $N',V'>0$ such that
for all $0<\epsilon<1/4$, $\psi\in\Psi$, $n\in\N_{>N'}$, and $\left(v_n\right)$ positive valued we have
\begin{align*}
\mu\left(\left\{ \left|\mathsf{S}_n\mathbbm{1}_{\left\{\chi=v_n\right\}}-p_n'\cdot n\right|\geq V'\cdot c_{\epsilon,\psi}\left(p_n'\cdot n,n\right)\text{ i.o.}\right\} \right)=0.
\end{align*}
\end{lemma}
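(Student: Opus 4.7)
The plan is to follow the proof of Lemma \ref{lem: bernoulli} essentially verbatim, with $\mathbbm{1}_{\{\chi > u_n\}}$ replaced by $\mathbbm{1}_{\{\chi = v_n\}}$, $p_n$ by $p_n'$, $K_2$ by $K_3$, and the role of the uniform bound \eqref{C 2} played by the additional hypothesis \eqref{eq: chi = ell} of Theorem \ref{thm: asymp thm}. The only place where the specific form of the indicator enters the original argument is through the uniform bound $\|\mathbbm{1}_{\{\chi > u_n\}}\| \leq K_2$; under \eqref{eq: chi = ell} the analogous bound $\|\mathbbm{1}_{\{\chi = v_n\}}\| \leq K_3$ holds uniformly in $n$, so none of the spectral-gap input needs to be reproved and the constants $N',V'$ can be chosen to depend only on $K_3$ and the spectral data of Property $\mathfrak{D}$.

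Concretely, I would first invoke the Bernstein/Bennett-type deviation inequality for Birkhoff sums that underlies Lemma \ref{lem: bernoulli}, derived from Property $\mathfrak{D}$ via the transfer operator and a Nagaev-style perturbation argument. Applied to the centred bounded observable $\mathbbm{1}_{\{\chi = v_n\}} - p_n'$, whose $\|\cdot\|$-norm is uniformly at most $K_3 + 1$, it produces an estimate of the form
\begin{align*}
\mu\left(\max_{1\leq m\leq n}\left|\mathsf{S}_m\mathbbm{1}_{\{\chi = v_n\}} - p_n'\cdot m\right| \geq t\right) \leq C_1\exp\left(-C_2\cdot\frac{t^2}{p_n'\cdot n + t}\right),
\end{align*}
valid for all sufficiently large $n$, with constants $C_1,C_2$ independent of $n$ and $v_n$.

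Next I would split $\mathbb{N}$ into dyadic blocks $I_j = [2^j,2^{j+1})$, fix on each $I_j$ the threshold $t = V'\cdot c_{\epsilon,\psi}(p_n'\cdot n, n)$, and bound the union over $n\in I_j$ by the maximal event above (using that $p_n'\cdot n$ and $\log\psi(\lfloor\log n\rfloor)$ vary by at most a constant factor across one dyadic block). Splitting into the two regimes $p_n'\cdot n \geq \log\psi(\lfloor\log n\rfloor)$ (Gaussian regime, where $t^2/(p_n'\cdot n)$ dominates) and $p_n'\cdot n < \log\psi(\lfloor\log n\rfloor)$ (linear regime, where $t$ dominates), the definition of $c_{\epsilon,\psi}$ in \eqref{c(n)} gives in both cases an exponent of order at least $\log\psi(j)$, provided $V'$ is chosen large enough. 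Thus the probability of each dyadic block event is bounded by a constant multiple of $1/\psi(j)$, which is summable since $\psi\in\Psi$, and Borel--Cantelli yields the claim.

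The main, and in fact only, obstacle is to ensure that the deviation inequality of Lemma \ref{lemma: Tnfn chi deviation}-type really is uniform across the family $\{\mathbbm{1}_{\{\chi=v_n\}}\}_{n\in\mathbb{N}}$; this is precisely what the uniform norm bound $K_3$ guarantees, and it is the single new input relative to the proof of Lemma \ref{lem: bernoulli}. Everything else is identical bookkeeping, which is why no details beyond the reference to Lemma \ref{lem: bernoulli} are truly needed.
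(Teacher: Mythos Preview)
Your proposal is correct and matches the paper's approach exactly: the paper simply states that the proof of Lemma~\ref{lem: bernoulli=} is the same as that of Lemma~\ref{lem: bernoulli} with \eqref{eq: chi = ell} used in place of \eqref{C 2}, which is precisely the substitution you identify. Your additional sketch of the underlying Bernstein-type inequality and dyadic blocking is faithful to the argument in \cite{kessebohmer_strong_2017} that both lemmas rely on.
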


As the proof of Lemma \ref{lem: bernoulli=} is mainly the same as the proof of Lemma \ref{lem: bernoulli} given in \cite{kessebohmer_strong_2017},
we will not repeat it here.
The only difference is that we use \eqref{eq: chi = ell} instead of \eqref{C 2}.

With those two properties at hand we are able to prove Theorem \ref{thm: asymp thm}. 
\begin{proof}[Proof of Theorem \ref{thm: asymp thm}]
In the first part of the proof we will show \eqref{eq: lim Snbnchi}
using Lemma \ref{lem: Prop A B to trimming}.
We define 
\begin{align}
f_{n}\coloneqq F^{\leftarrow}\left(1-\frac{b_{n}-V\cdot c\left(b_{n},n\right)}{n}\right)\label{eq: def fn}
\end{align}
with $c$ as in \eqref{c(n)} and $V$ given in Lemma \ref{lem: bernoulli}.
With this choice of $(f_n)$ we have that 
\begin{align*}
 n\cdot\mu(\chi>f_n)\leq b_n-V\cdot c(b_n,n)\leq n\cdot \mu(\chi\geq f_n).
\end{align*}

We first want to show the following:
Let $(b_n)$ be as in \eqref{eq: t cond a4} and set
\begin{align}
\gamma_n\coloneqq V\cdot c\left(n\cdot\mu\left(\chi>f_n\right),n\right)\text{ and }
\gamma_n'\coloneqq V'\cdot c\left(n\cdot\mu\left(\chi=f_n\right),n\right),\label{eq: gamman gamman'}
\end{align}
for all $n\in\mathbb{N}$.
Then there exists $w\in\mathbb{R}_{>0}$ such that for all $n\in\mathbb{N}$ 
the sequence $(b_n)$ can be written as 
\begin{align}
 b_n=\left\lceil n\cdot\mu(\chi>f_n)+ r_n\right\rceil\label{eq: bn=}
\end{align}
with
\begin{align*}
 r_n\in \left[\gamma_n,n\cdot \mu(\chi=f_n)+w(\gamma_n+\gamma'_n)\right].
\end{align*}
This can be seen as follows: As a first boundary case set
$b_n'\coloneqq\left\lceil n\cdot\mu(\chi>f_n)+\gamma_n\right\rceil$,
then 
\begin{align}
 F^{\leftarrow}\left(1-\frac{b_{n}'-V\cdot c\left(b_{n}',n\right)}{n}\right)
 &\geq F^{\leftarrow}\left(1-\mu(\chi>f_n)\right)=f_n.\label{eq: F leftarrow bn'}
\end{align}
As the second boundary case set
\begin{align*}
 b_n''
 &\coloneqq\left\lceil n\cdot\mu(\chi>f_n)+ n\cdot \mu(\chi=f_n)+w(\gamma_n+\gamma'_n)\right\rceil
 =\left\lceil n\cdot\mu(\chi\geq f_n)+w(\gamma_n+\gamma'_n)\right\rceil.
\end{align*}
Assume that $\left(b_{n}\right)=\left(b_{n}''\right)$ fulfills \eqref{eq: t cond a4} 
for some $\psi\in\Psi$.
If we consider $c_{\epsilon,\psi}\left(k,n\right)$ 
in \eqref{c(n)} for the same $\psi$, then 
$c\left(b_{n}'',n\right)=o\left(b_{n}''\right)$.
Since $n\cdot \mu\left(\chi=f_n\right)\leq b_n''$ and
$c$ is monotonically increasing in its first argument, 
we also have that $w\cdot\left(\gamma_n+\gamma_n'\right)=o\left(b_n''\right)$. 
Applying \eqref{eq: mu chi =} and \eqref{eq: mu chi >} gives
\begin{align}
 b_n''\asymp n\cdot\mu\left(\chi> f_n\right)\asymp n\cdot\mu\left(\chi= f_n\right),\label{eq: bn'' asymp}
\end{align}
which implies that there exists $w>0$ such that $V\cdot c\left(b_n'',n\right)< w\cdot\left(\gamma_n+\gamma_n'\right)$, 
for all $n\in\mathbb{N}$.
Hence,
\begin{align}
 F^{\leftarrow}\left(1-\frac{b_{n}''-V\cdot c\left(b_{n}'',n\right)}{n}\right)
 &\leq  F^{\leftarrow}\left(1-\mu(\chi\geq f_n)\right)
 =F^{\leftarrow}\left(1-\mu(\chi> f_n/\eta)\right)
 =f_n/\eta<f_n.\label{eq: F leftarrow bn''}
\end{align}
Finally, \eqref{eq: F leftarrow bn'}
and \eqref{eq: F leftarrow bn''}
imply $b_n\in\left[b_n',b_n''\right]$ and
\eqref{eq: bn=} follows.

Furthermore, by Lemma \ref{lem: bernoulli} and Lemma \ref{lem: bernoulli=} 
we have that the pair $((f_n),(\gamma_n))$ fulfills Property $\bm{B}$
and $((f_n),(\gamma_n'))$ fulfills Property $\bm{B'}$
for $(\gamma_n)$ and $(\gamma_n')$ being defined as in 
\eqref{eq: gamman gamman'}.

As in \eqref{eq: bn'' asymp} we can show that $b_n\asymp n\cdot \mu\left(\chi=f_n\right)$. 
This observation combined with \eqref{eq: t cond a4} and \eqref{eq: mu chi =} implies
\begin{align*}
 \lim_{n\to\infty}\frac{n\cdot \mu\left(\chi=f_n\right)}{\log\psi\left(\left\lfloor\log n\right\rfloor\right)}
 =\lim_{n\to\infty}\frac{n\cdot R \cdot f_n^{\log q/\log \eta}}{\log\psi\left(\left\lfloor\log n\right\rfloor\right)}
 =\infty
\end{align*}
which is equivalent to \eqref{cond plateau}
and Theorem \ref{thm: st peterburg} states 
that under this condition Property $\bm{A}$ holds.

From \eqref{eq: mu chi =} and \eqref{eq: int chi sim} it immediately follows
that \eqref{eq: cond chi=fn} holds.

Finally, we will prove \eqref{eq: fn gamman}.
Since $\mu(\chi=f_n)\asymp \mu(\chi>f_n)$ by \eqref{eq: mu chi =} and \eqref{eq: mu chi >},
we have by the definition of $c$ that also $\gamma_n\asymp \gamma_n'$ holds.
Using first \eqref{eq: int chi sim} and then \eqref{eq: mu ai aj} gives
\begin{align*}
 \frac{\max\{\gamma_n,\gamma_n'\}\cdot f_n}{\int\mathsf{T}_n^{f_n}\chi\mathrm{d}\mu}
 \asymp \frac{\gamma_n\cdot f_n}{n\cdot f_n^{1+\log q/\log \eta}}
 =\frac{\gamma_n}{n\cdot f_n^{\log q/\log \eta}}
 \asymp\frac{\gamma_n}{n\cdot \mu(\chi=f_n)}.
\end{align*}
Since $b_n\asymp n\cdot\mu(\chi=f_n)$ and $b_n$ fulfills \eqref{eq: t cond a4}, we have that
$c(n\cdot \mu(\chi=f_n),n)=o(n\cdot \mu(\chi=f_n))$.
Thus, $\gamma_n=o\left(n\cdot \mu\left(\chi=f_n\right)\right)$
implying \eqref{eq: fn gamman}.

Hence, we can apply Lemma \ref{lem: Prop A B to trimming} and obtain the first part of the theorem. 
\medskip

Next we show the asymptotic given in \eqref{bn psi exp2}. The first step is to prove that 
$f_n=\eta^{k_n}$ for the definition of $f_n$ in \eqref{eq: def fn}. 
First note that 
$w_n\geq \left(q^{k_n}\cdot n\right)^{1/2+\epsilon}\cdot \log\psi\left(\left\lfloor\log n\right\rfloor\right)^{1/2-\epsilon}$
together with \eqref{eq: t cond a4} implies $w_n\geq c_{\epsilon/2,\psi}\left(R/\left(1-q\right)\cdot q^{k_n}\cdot n,n\right)$,
for $n$ sufficiently large.
As $w_n=o\left(q^{k_n}\cdot n\right)$ this also implies 
$w_n\geq V\cdot c_{\epsilon/2,\psi}\left(b_n,n\right)$,
for $n$ sufficiently large.
As we can choose $\epsilon$ arbitrarily for $c_{\epsilon,\psi}$ in \eqref{eq: def fn}, 
$f_n$ can be set such that
$n\cdot R/\left(1-q\right)\cdot q^{k_n} \geq n\cdot \mu\left(\chi>f_n\right)$. 
This together with \eqref{eq: mu chi >} implies $f_n\leq \eta^{k_n}$.

On the other hand, as $w_n=o\left(n\cdot q^{k_n}\right)$ we also have that 
\begin{align*}
 b_n < n\cdot \frac{R}{1-q}\cdot q^{k_n-1}
 = n\cdot \mu\left(\chi>\eta^{k_n-1}\right)
\end{align*}
implying $f_n>\eta^{k_n-1}$ and thus $f_n=\eta^{k_n}$.
In particular, it follows that $n\cdot \mu(\chi>f_n)\sim b_n$.

Using \eqref{eq: mu chi >} yields
\begin{align*}
 b_n^{1+\log\eta/\log q}
 \sim f_n^{\log q/\log \eta+1}\cdot \left(\frac{R\cdot q}{1-q}\right)^{1+\log\eta/\log q} \cdot n^{1+\log\eta/\log q}.
\end{align*}
Hence, using \eqref{eq: int chi sim} gives
\begin{align*}
 \int\mathsf{T}_n^{f_n}\mathrm{d}\mu
 &\sim \frac{\eta}{q\cdot \eta-1}\cdot \left(R\cdot q\right)^{-\log \eta/\log q}\cdot \left(1-q\right)^{1+\log \eta/\log q}\cdot  n^{-\log\eta/\log q}\cdot b_n^{1+\log\eta/\log q}.
\end{align*}
\end{proof}

\section{Properties of the space of quasi-H\"older continuous functions}\label{sec: quasi Hoelder}
\subsection{Further definitions and remarks}\label{subsec: def and rem}
We first give some standard definitions which we have omitted in the introduction. 
\begin{Def}\label{irred aperiod}
A matrix $A\in\left\{0,1\right\}^{k\times k}$ is called \emph{irreducible} if for each pair $\left(i,j\right)$ with $1\leq i,j\leq k$, there exists an $n\in\mathbb{N}$ such that $A^n\left(i,j\right)>0$. 
We define the \emph{period} $d$ of $A$ as 
\begin{align*}
d\coloneqq\gcd\left\{n\in\mathbb{N}\colon A^n\left(i,i\right)>0\text{, for all }1\leq i\leq k\right\}, 
\end{align*}
where $\gcd$ denotes the greatest common divisor. 
The matrix $A$ is called \emph{aperiodic} if $d=1$.
\end{Def}

\begin{Def}\label{def: gibbs measure}
  A probability measure $\mu$ on $\X $ is called \emph{Gibbs measure}
  if there exist a measurable function $f\colon \X \to \mathbb{R}$ and constants $K>0$ and $C\in \mathbb{R}$ such that
  for all $n\in\mathbb{N}$, all $\mathsf{A}\in\mathbb{A}_{n}$, and all $x\in [\mathsf{A}]$ we have that
  \begin{align*}
   K^{-1}
   \leq\frac{\mu\left([\mathsf{A}]\right)}{\exp\left(\mathsf{S}_n f(x) +n\cdot C\right)}
   \leq K.
  \end{align*}
\end{Def}

\begin{Def}\label{Def markov measure}
Let $\mathbb{A}=\left\{\mathfrak{a}_1,\ldots,\mathfrak{a}_k\right\}$, $k\geq 2$ be a finite alphabet.
Further, let $A\in \left\{0,1\right\}^{k\times k}$ and $P\in \left[0,1\right]^{k\times k}$ with $p_{i,j}= 0$ if and only if $a_{i,j}=0$ and $\sum_{j=1}^k p_{i,j}=1$, for all $1\leq i\leq n$. Further, let $\pi$ be an $n$-vector such that $\pi\cdot P=\pi$. 
If a measure $\mu$ is defined on cylinder sets as
\begin{align*}
\mu\left(\left[\mathfrak{a}_{i_1}\ldots \mathfrak{a}_{i_k}\right]\right)\coloneqq\pi_{i_1}\cdot p_{i_1,i_2}\cdot\ldots\cdot p_{i_{k-1},i_k}, 
\end{align*}
then it is called \emph{Markov measure} corresponding to $A$. 
\end{Def}

Next we recall the definition of $g$-functions and $g$-measures.
\begin{Def}\label{def: g func meas}
A measurable function $g\colon \X \to\mathbb{R}_{\geq 0}$ is called \emph{$g$-function} if
\begin{equation*}
 \sum_{x \in \sigma^{-1}y} g(x) = 1,
\end{equation*}
for all $y \in \X $.
If $g=\mathrm{d}\mu/\mathrm{d}\mu\circ\sigma$,
then $\mu$ is called \emph{$g$-measure} for a $g$-function $g$.
\end{Def}

For the following let $L_{f}\colon\mathcal{L}^1\to\mathcal{L}^1$ denote the Perron-Frobenius operator given by
\begin{align}
 {L}_f w\left(x\right)\coloneqq\sum_{\sigma y=x}\mathrm{e}^{f\left(y\right)}w\left(y\right),\quad x\in \X .\label{eq: PF-opearator}
\end{align}
If $\exp(f)$ is a $g$-function, then $L_{f}$ is normalized, i.e.\ $L_{f} 1=1$. 

\begin{Prop}\label{prop: g measure}
Let $f\colon \X \to \X $ be Lipschitz continuous with respect to the metric $d_1$ such that $\exp(f)$ is a $g$-function.  
Then there exists a  corresponding $g$-measure $\mu$. This measure is a $\sigma$-invariant  Gibbs measure with constant $C=0$ 
and as such mixing. Moreover, the Perron-Frobenius operator $L_f$ coincides with the definition of the transfer operator given in \eqref{hat CYRI}.
\end{Prop}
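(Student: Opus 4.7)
The plan is to invoke the classical Ruelle--Perron--Frobenius theorem for Lipschitz potentials on one-sided subshifts of finite type and then read off each assertion from its output. Since $f\in F_\theta(\X)$, the operator $L_f$ acts continuously on $F_\theta(\X)$, and the normalisation $\sum_{\sigma y=x}\mathrm{e}^{f(y)}=1$ built into the definition of a $g$-function is precisely $L_f 1=1$. In this already normalised setting the Ruelle--Perron--Frobenius theorem yields that $1$ is a simple leading eigenvalue of $L_f$ on $F_\theta(\X)$, the rest of the spectrum lies strictly inside the unit disc, and $L_f^{\ast}$ admits a unique fixed probability measure $\mu$; this $\mu$ is the desired $g$-measure.

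I would then verify each claimed property using the pointwise algebraic identity
\[
L_f\bigl(w\cdot(v\circ\sigma)\bigr)=v\cdot L_f w,
\]
together with $L_f^{\ast}\mu=\mu$, which is equivalent to $\int L_f h\,\mathrm{d}\mu=\int h\,\mathrm{d}\mu$. For $\sigma$-invariance,
\[
\int v\circ\sigma\,\mathrm{d}\mu=\int L_f(v\circ\sigma)\,\mathrm{d}\mu=\int v\cdot L_f 1\,\mathrm{d}\mu=\int v\,\mathrm{d}\mu.
\]
For the Gibbs property with $C=0$, iteration gives $L_f^n 1=1$ and hence $\mu([\mathsf{A}])=\int L_f^n\mathbbm{1}_{[\mathsf{A}]}\,\mathrm{d}\mu$ for every $\mathsf{A}\in\mathbb{A}_n$; the sum defining $L_f^n\mathbbm{1}_{[\mathsf{A}]}(y)$ is either empty or reduces to the single term $\mathrm{e}^{\mathsf{S}_n f(z(y))}$ for the unique preimage $z(y)\in[\mathsf{A}]$ with $\sigma^n z(y)=y$. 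Lipschitz continuity of $f$ with respect to $d_1$ gives the uniform distortion estimate
\[
\bigl|\mathsf{S}_n f(z)-\mathsf{S}_n f(x)\bigr|\le |f|_\theta\cdot\frac{\theta}{1-\theta}\qquad(z,x\in[\mathsf{A}]),
\]
and combined with the fact that the admissible set $\{y:\mathsf{A}y\in\X\}$ has $\mu$-measure bounded below uniformly in $\mathsf{A}$ (irreducibility and aperiodicity, plus positivity of $\mu$ on every non-empty cylinder) this yields constants such that $K^{-1}\le\mu([\mathsf{A}])/\exp(\mathsf{S}_n f(x))\le K$ for every $x\in[\mathsf{A}]$, which is the Gibbs condition with $C=0$.

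Mixing is then immediate from the spectral gap of $L_f$ on $F_\theta(\X)$, which delivers exponential decay of correlations and in particular mixing of $\mu$. Finally, to identify $L_f$ with the transfer operator $\widehat{\sigma}$ defined by \eqref{hat CYRI}, I would verify the defining duality: using the same algebraic identity and $L_f^{\ast}\mu=\mu$,
\[
\int L_f w\cdot v\,\mathrm{d}\mu=\int L_f\bigl(w\cdot(v\circ\sigma)\bigr)\,\mathrm{d}\mu=\int w\cdot(v\circ\sigma)\,\mathrm{d}\mu,
\]
which characterises $\widehat{\sigma}$ uniquely.

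The content of the proof is therefore classical; the main obstacle is purely organisational, namely arranging the estimates so that the Gibbs constant comes out to be exactly $C=0$ rather than the topological pressure $\log\rho(L_f)$. This is precisely what the $g$-function normalisation $L_f 1=1$ is engineered to achieve, and the verification reduces to the uniform distortion bound recorded above.
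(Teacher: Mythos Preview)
Your proposal is correct and is essentially a self-contained unpacking of the paper's own proof, which consists entirely of citations: the paper invokes Keane's theorem on strongly mixing $g$-measures, Parry--Pollicott's Gibbs property for equilibrium states of H\"older potentials, and Walters' Ruelle operator theorem, and then notes that the identification $L_f=\widehat{\sigma}$ is a direct calculation. Your sketch reproduces precisely the content of those references---RPF for the existence of $\mu$ as the $L_f^{\ast}$-fixed probability, the bounded distortion argument for the Gibbs estimate with $C=0$ (using $L_f 1=1$), mixing via the spectral gap, and the duality computation for $\widehat{\sigma}$---so there is no substantive difference in approach, only in level of detail.
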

This proposition follows from a direct application of 
\cite[Theorem on p.\ 134]{keane_strongly_1972}, \cite[Corollary 3.2.1]{parry_zeta_1990} 
and \cite[Corollary 3.3]{walters_ruelle_1975}.
The last statement follows by direct calculation.

\subsection{The spectral gap property}\label{subsec: spec gap}
To show that the transfer operator $\widehat{\sigma}$ has a spectral gap on $\mathsf{H}$
we first have to ensure that $\mathsf{H}$ is a Banach space.
\begin{lemma}\label{lem: H Banach algebra}
$\mathsf{H}$ is a Banach algebra containing the constant functions and fulfills $\left|\cdot\right|_{\infty}\leq \left\|\cdot\right\|_{\epsilon_0}$, for each $\epsilon_0\leq 1$.
\end{lemma}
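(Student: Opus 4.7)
The plan is to check the four required properties in turn: that $\|\cdot\|_{\epsilon_0}$ is a norm, that $|\cdot|_\infty\le\|\cdot\|_{\epsilon_0}$, that $\mathsf{H}$ contains the constants and is closed under multiplication with submultiplicative norm, and finally that $\mathsf{H}$ is complete.

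First I would observe that the triangle inequality and absolute homogeneity for $\|\cdot\|_{\epsilon_0}$ reduce, after passing through the integrals defining $|\cdot|_{\epsilon_0}$, to the pointwise subadditivity and homogeneity of the oscillation:
\[
\osc(f+g,C)\le \osc(f,C)+\osc(g,C),\qquad \osc(\lambda f,C)=|\lambda|\,\osc(f,C).
\]
Positive definiteness is immediate from $\|h\|_{\epsilon_0}\ge |h|_\infty$, which in turn is just the inequality claimed in the statement, since $|h|_{\epsilon_0}\ge 0$ by construction. For the constant functions, $\osc(c,C)=0$ on every $C$, so $|c|_{\epsilon_0}=0$ and hence $\|c\|_{\epsilon_0}=|c|<\infty$.

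Next I would establish the algebra property. Writing $f(x)g(x)-f(y)g(y)=f(x)(g(x)-g(y))+g(y)(f(x)-f(y))$ on a cylinder and passing to essential sup/inf on $C$ one gets
\[
\osc(fg,C)\le |f|_\infty\osc(g,C)+|g|_\infty\osc(f,C).
\]
Integrating against $\mu$, dividing by $\epsilon$ and taking the supremum over $\epsilon\in(0,\epsilon_0]$ gives $|fg|_{\epsilon_0}\le |f|_\infty|g|_{\epsilon_0}+|g|_\infty|f|_{\epsilon_0}$. Together with $|fg|_\infty\le |f|_\infty|g|_\infty$ this yields the submultiplicative bound
\[
\|fg\|_{\epsilon_0}\le \|f\|_{\epsilon_0}\,\|g\|_{\epsilon_0},
\]
so $\mathsf{H}$ is closed under products and is a normed algebra containing $1$.

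Finally for completeness, let $(h_n)$ be Cauchy in $\|\cdot\|_{\epsilon_0}$. Since $|\cdot|_\infty\le\|\cdot\|_{\epsilon_0}$, it is Cauchy in $\mathcal{L}^\infty$ and converges there to some $h\in\mathcal{L}^\infty$. The key step is that $\mathcal{L}^\infty$-convergence forces $\osc(h_m-h_n,C)\to\osc(h-h_n,C)$ as $m\to\infty$ uniformly in $C$; hence for each fixed $\epsilon\in(0,\epsilon_0]$ Fatou's lemma applied to the integrand $x\mapsto\osc(h_m-h_n,B(\epsilon,x))$ gives
\[
\int \osc(h-h_n,B(\epsilon,x))\,\mathrm{d}\mu\;\le\;\liminf_{m\to\infty}\int \osc(h_m-h_n,B(\epsilon,x))\,\mathrm{d}\mu.
\]
Dividing by $\epsilon$ and taking the supremum over $\epsilon\le\epsilon_0$ yields $|h-h_n|_{\epsilon_0}\le \liminf_m |h_m-h_n|_{\epsilon_0}$, which tends to $0$ by the Cauchy hypothesis; in particular $h-h_n\in\mathsf{H}$, hence $h\in\mathsf{H}$, and $h_n\to h$ in $\|\cdot\|_{\epsilon_0}$.

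The main obstacle I anticipate is this last Fatou/liminf step: one must justify measurability of $x\mapsto\osc(h,B(\epsilon,x))$ (so that the integral defining $|h|_{\epsilon_0}$ is unambiguous) and use the $\mathcal{L}^\infty$-convergence to pass the oscillation through the limit in a way compatible with the supremum over $\epsilon$. Everything else is a fairly direct manipulation of oscillations and essential suprema.
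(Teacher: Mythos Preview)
Your proposal is correct and follows essentially the same route as the paper: subadditivity and homogeneity of the oscillation for the norm axioms, the pointwise bound $\osc(fg,C)\le|f|_\infty\osc(g,C)+|g|_\infty\osc(f,C)$ for the Banach-algebra estimate, and $\mathcal{L}^\infty$-convergence of a Cauchy sequence combined with Fatou's lemma on the oscillation integrals for completeness. The paper presents the steps in a slightly different order (completeness before the algebra property) and does not comment on the measurability of $x\mapsto\osc(h,B(\epsilon,x))$ that you flag, but otherwise the arguments coincide.
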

\begin{proof}
Let $f,g\in\mathsf{H}$.
Since 
\begin{align*}
\left\|f+g\right\|_{\epsilon_0}&=\left|f+g\right|_{\infty}+\sup_{0<\epsilon\leq\epsilon_0}\frac{\int\osc\left(f+g,B\left(\epsilon,x\right)\right)\mathrm{d}\mu\left(x\right)}{\epsilon}\\
&\leq \left|f\right|_{\infty}+\left|g\right|_{\infty}+\sup_{0<\epsilon\leq\epsilon_0}\frac{\int\osc\left(f,B\left(\epsilon,x\right)\right)\mathrm{d}\mu\left(x\right)+\int\osc\left(g,B\left(\epsilon,x\right)\right)\mathrm{d}\mu\left(x\right)}{\epsilon}\\
&\leq \left|f\right|_{\infty}+\left|g\right|_{\infty}+\sup_{0<\epsilon\leq\epsilon_0}\frac{\int\osc\left(f,B\left(\epsilon,x\right)\right)\mathrm{d}\mu\left(x\right)}{\epsilon}+\sup_{0<\epsilon\leq\epsilon_0}\frac{\int\osc\left(g,B\left(\epsilon,x\right)\right)\mathrm{d}\mu\left(x\right)}{\epsilon}\\
&=\left\|f\right\|_{\epsilon_0}+\left\|g\right\|_{\epsilon_0}
\end{align*}
and all other properties of a norm follow immediately, we find that $\left\|\cdot\right\|_{\epsilon_0}$ is a norm.

In the next steps we will show completeness following the proof in \cite[Lemma 2.3.17]{blank_discreteness_1997}. Let $\left(f_n\right)$ be a Cauchy sequence with respect to $\left\|\cdot\right\|_{\epsilon_0}$. 
In particular, $\left(f_n\right)$ is also a Cauchy sequence with respect to $\left|\cdot\right|_{\infty}$, we set $f$ as its limit. 
So our next step is to prove that $f\in \mathsf{H}$. Since $\left(f_n\right)$ is a Cauchy sequence with respect to $\left\|\cdot\right\|_{\epsilon_0}$, for each $\delta>0$ we can choose $L>0$
such that $\left\|f_{k}-f_{\ell}\right\|_{\epsilon_0}<\delta$ for all $k,\ell>L$. Then we have that
\begin{align*}
 \left\|f_{k}-f_{\ell}\right\|_{\epsilon_0}
 =\left|f_{k}-f_{\ell}\right|_{\infty}+\sup_{0<\epsilon\leq\epsilon_0}\frac{\int\osc\left(f_{k}-f_{\ell},B\left(\epsilon,x\right)\right)\mathrm{d}\mu\left(x\right)}{\epsilon}<\delta.
\end{align*}
By Fatou's lemma we have that the limit $\ell\to \infty$ on the right hand side exists and thus,
\begin{align*}
 \left\|f_{k}-f\right\|_{\epsilon_0}=\left|f_{k}-f\right|_{\infty}+\sup_{0<\epsilon\leq\epsilon_0}\frac{\int\osc\left(f_{k}-f,B\left(\epsilon,x\right)\right)\mathrm{d}\mu\left(x\right)}{\epsilon}<\delta.
\end{align*}
Thus, $f\in\mathsf{H}$ and $\left(f_{n}\right)$ converges to $f$ with respect to $\left\|\cdot\right\|_{\epsilon_0}$. 

We further note that for all $f,g\in \mathsf{H}$
\begin{align*}
\left|f\cdot g\right|_{\epsilon_0}\leq\left|f\right|_{\infty}\cdot\left|g\right|_{\epsilon_0}+\left|f\right|_{\epsilon_0}\cdot\left|g\right|_{\infty} 
\end{align*}
and thus,
\begin{align*}
\left\|f\cdot g\right\|_{\epsilon_0}
&\leq \left|f\right|_{\infty}\cdot\left|g\right|_{\infty}+\left|f\right|_{\infty}\cdot\left|g\right|_{\epsilon_0}+\left|f\right|_{\epsilon_0}\cdot\left|g\right|_{\infty} 
\leq \left\|f\right\|_{\epsilon_0}\cdot\left\|g\right\|_{\epsilon_0}.
\end{align*}
Hence, $\left(\mathsf{H},\left\|\cdot\right\|_{\epsilon_0}\right)$ is a Banach algebra, the constant functions are obviously contained, and $\left|\cdot\right|_{\infty}\leq \left\|\cdot\right\|_{\epsilon_0}$ is clear from the definition of $\left\|\cdot\right\|_{\epsilon_0}$.  
\end{proof}

To prove that $\widehat{\sigma}$ has a spectral gap, we use the following theorem by Hennion and Hervé which is a generalization of a theorem by Doeblin and Fortet, \cite{doeblin_sur_1937}, and Ionescu–Tulcea and Marinescu, \cite{ionescu_theorie_1950}.
\begin{lemma}[{\cite[Theorem II.5]{hennion_limit_2001}}]\label{h it m d f}
Suppose $\left(\mathcal{L} ,\left\|\cdot\right\|\right)$ is a Banach space and $U:\mathcal{L}\to\mathcal{L}$ is a bounded linear
operator with spectral radius $1$. Assume that there exists a semi-norm $\left|\cdot\right|'$ with
the following properties:
\begin{enumerate}[(a)]
\item\label{Hennion 1} $\left|\cdot\right|'$ is continuous on $\mathcal{L}$.
\item\label{Hennion 2} $U$ is bounded on $\mathcal{L}$ with respect to $\left|\cdot\right|'$, i.e.\ there exists $M > 0$ such that $\left|U f \right|'\leq  M \left|f\right|'$, for all $f \in\mathcal{L}$.
\item\label{Hennion 3} There exist constants $0<r<1$, $R>0$, and $n_0\in\mathbb{N}$ such that 
\begin{align}
\left\|U^{n_0} f\right\|\leq r^{n_0}\cdot\left\|f\right\|+R\cdot \left|f\right|',\label{mar tul ios}
\end{align}
for all $f \in\mathcal{L}$.
\item\label{Hennion 4} 
$U\left\{f\in\mathcal{L}\colon\left|f\right|'<1\right\}$ is precompact on $\left(\mathcal{L},\left\|\cdot\right\|\right)$, i.e.\ for each sequence $\left(f_n\right)_{n\in\mathbb{N}}$ with values in $\mathcal{L}$ fulfilling $\sup_{n\in\mathbb{N}} \left|f_n\right|'<1$ there exists
a subsequence $\left(n_k\right)$ and $g \in \mathcal{L}$ such that  
\begin{align*}
\lim_{k\to\infty}\left\|U f_{n_k}-g\right\|=0.
\end{align*}
\end{enumerate}

Then $U$ is quasi-compact, i.e.\ there is a direct sum decomposition $\mathcal{L}=F\oplus H$ and $0<\tau<\rho\left(U\right)$ where
\begin{itemize}
\item $F$, $H$ are closed and $U$-invariant, i.e.\ $U\left(F\right) \subset F$, $U\left(H\right) \subset H$,
\item $\dim\left(F\right) <\infty$  and all eigenvalues of $U\lvert_F:\mathcal{F}\to \mathcal{F}$ have modulus larger than $\tau$, and
\item $\rho\left(U\lvert_{H}\right)<\tau$.
\end{itemize}
\end{lemma}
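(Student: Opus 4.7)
The plan is to show that the essential spectral radius satisfies $\rho_{\mathrm{ess}}(U) \leq r$; since $r < 1 = \rho(U)$, the required decomposition $\mathcal{L} = F \oplus H$ is then a direct consequence of the standard analytic functional calculus for quasi-compact operators.

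First I would iterate the Lasota--Yorke-type inequality \eqref{mar tul ios}. Writing $U^{2n_0} f = U^{n_0}(U^{n_0} f)$ and reapplying (c), combined with condition (b) to control $|U^{j n_0} f|' \leq M^{j n_0}|f|'$, an induction on $k$ yields constants $R_k > 0$ with
\[\|U^{k n_0} f\| \leq r^{k n_0} \|f\| + R_k\, |f|', \qquad f \in \mathcal{L}.\]
Filling in intermediate exponents via one further application of (c) and (b) produces, for every $n \in \mathbb{N}$, constants $a_n, b_n > 0$ with $\|U^n f\| \leq a_n \|f\| + b_n |f|'$ and $\limsup_{n\to\infty} a_n^{1/n} \leq r$.

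Second, I would bound the Kuratowski measure of non-compactness $\alpha(U^{n})$. By condition (a), the $\|\cdot\|$-unit ball $B$ satisfies $\sup_{f\in B} |f|' \leq C < \infty$. Applying the iterated inequality to $f \in B$ and then one more copy of $U$, the image $U^{n+1} B$ decomposes into a $\|\cdot\|$-bounded piece of radius $\lesssim a_n$ and a piece contained in $\|U\|\cdot U(\{g\colon |g|' \leq b_n C\})$, which is $\|\cdot\|$-precompact by (d). Hence $\alpha(U^{n+1} B) \lesssim a_n$, and Nussbaum's formula yields
\[\rho_{\mathrm{ess}}(U) \;=\; \lim_{n\to\infty} \alpha(U^n)^{1/n} \;\leq\; \limsup_{n\to\infty} a_n^{1/n} \;\leq\; r.\]

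Finally, I would invoke the Riesz decomposition. Fix any $\tau \in (r, 1)$; since $\rho_{\mathrm{ess}}(U) \leq r < \tau < \rho(U)$, the part of the spectrum of $U$ outside the closed disk of radius $\tau$ is a finite set of eigenvalues of finite algebraic multiplicity. The Dunford--Riesz projection $P = (2\pi i)^{-1}\oint_\gamma (zI-U)^{-1}\,dz$, taken along a contour $\gamma$ encircling this outer part of $\spec(U)$, produces the splitting $\mathcal{L} = F \oplus H$ with $F = \Image(P)$ finite-dimensional, both subspaces $U$-invariant, all eigenvalues of $U|_F$ of modulus $> \tau$, and $\rho(U|_H) \leq \tau$. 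The main obstacle is the essential spectral radius estimate: the iteration of \eqref{mar tul ios} is routine, but converting the $|\cdot|'$-formulated compactness in (d) into a genuine bound on $\alpha(U^n)$ for $\|\cdot\|$-bounded sets requires both the continuity from (a) and one extra application of $U$, and one must track the constants in the iteration carefully to ensure that $\limsup a_n^{1/n} \leq r$ despite the geometric growth $M^{j n_0}$ coming from (b).
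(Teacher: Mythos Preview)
The paper does not give its own proof of this lemma; it is quoted from Hennion and Herv\'e \cite[Theorem II.5]{hennion_limit_2001} and used only as a black box in the proof of Proposition~\ref{prop: spec gap}. There is therefore no argument in the paper to compare yours against.

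That said, your outline is the standard route to this result and is essentially how it is established in \cite{hennion_limit_2001}: iterate the Doeblin--Fortet inequality \eqref{mar tul ios} to obtain $\|U^{kn_0}f\|\leq r^{kn_0}\|f\|+R_k\,|f|'$, feed the compactness hypothesis (d) into a bound on the measure of non-compactness of $U^n$ on the unit $\|\cdot\|$-ball, invoke Nussbaum's formula to get $\rho_{\mathrm{ess}}(U)\leq r<1=\rho(U)$, and then extract $\mathcal{L}=F\oplus H$ from the Riesz projection associated with the finitely many eigenvalues of modulus exceeding $\tau$. The obstacle you single out---converting the $|\cdot|'$-formulated precompactness in (d) into a genuine $\|\cdot\|$-measure-of-non-compactness estimate by spending one extra application of $U$---is exactly the crux of the argument. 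One small remark: with (a) and (d) taken literally as stated here, the $\|\cdot\|$-unit ball is already $|\cdot|'$-bounded and hence $U$ itself comes out compact, which short-circuits the iteration; the version actually verified later in the paper (and in \cite{hennion_limit_2001}) is the weaker statement that $\|\cdot\|$-bounded sets have $|\cdot|'$-precompact image, and under that reading your plan is precisely what is required.
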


With the following lemma we will show that $\widehat{\sigma}$ 
has a spectral gap. 
It is a standard approach, but for completeness we will also give a proof of this lemma.
\begin{lemma}\label{quasi comp to spec gap}
If $U$ is quasi-compact, $U$ has a unique eigenvalue on $\left\{z \colon \left|z\right| = \rho\left(U\right)\right\}$, and this eigenvalue is simple, then $U$ has a spectral gap. 
\end{lemma}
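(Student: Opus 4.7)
The plan is to build the spectral gap decomposition directly from the quasi-compactness decomposition $\mathcal{L} = F \oplus H$ provided by Lemma \ref{h it m d f}. Since $F$ is finite-dimensional, the spectrum of $U|_F$ consists of finitely many eigenvalues, and together with $\rho(U|_H) < \tau < \rho(U)$ this implies that the spectrum of $U$ outside the closed disc of radius $\tau$ is exactly this finite set of eigenvalues. The hypothesis singles out a unique $\lambda$ on the circle $\{|z| = \rho(U)\}$, so all other eigenvalues of $U|_F$ have modulus strictly less than $|\lambda|$. In particular, since $\lambda$ is simple and lies at positive distance from the rest of the spectrum, it is an isolated point of $\operatorname{spec}(U)$.

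Next I would decompose $F = F_1 \oplus F_2$, where $F_1$ is the generalized eigenspace of $\lambda$ (which is one-dimensional because $\lambda$ is simple) and $F_2$ is the sum of generalized eigenspaces corresponding to the remaining eigenvalues of $U|_F$. Both $F_1$ and $F_2 \oplus H$ are closed and $U$-invariant. Define $P \colon \mathcal{L} \to \mathcal{L}$ as the projection onto $F_1$ along $F_2 \oplus H$; equivalently, $P$ is the Riesz spectral projection associated with the isolated singleton $\{\lambda\}$, so $P$ is a bounded linear operator that commutes with $U$, satisfies $P^2 = P$ and $\dim(\operatorname{Im}(P)) = 1$, and restricts to multiplication by $\lambda$ on $F_1$, hence $UP = \lambda P$.

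Finally I would set $N \coloneqq U - \lambda P$ and verify the three bullet conditions of Definition \ref{def spec gap}. The orthogonality $PN = NP = 0$ follows immediately from $UP = PU = \lambda P$ and $P^2 = P$. On $F_1$ one has $N = 0$; on the complementary invariant subspace $F_2 \oplus H$ the projection $P$ vanishes, so $N = U$ there, giving
\[
\rho(N) = \max\bigl\{\rho(U|_{F_2}),\,\rho(U|_H)\bigr\} < |\lambda|,
\]
because $\rho(U|_{F_2})$ is the largest modulus of the non-maximal eigenvalues of $U|_F$ (strictly less than $|\lambda|$) and $\rho(U|_H) < \tau < |\lambda|$. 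This yields the required decomposition $U = \lambda P + N$.

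The only real subtlety I expect is the justification that $P$ is a bounded operator and that $F_2 \oplus H$ is closed and $U$-invariant, so that the split $\mathcal{L} = F_1 \oplus (F_2 \oplus H)$ is topological rather than merely algebraic; this is exactly the content of the Riesz functional calculus applied to the isolated point $\lambda$, which is available once quasi-compactness has placed $\lambda$ at positive distance from $\operatorname{spec}(U)\setminus\{\lambda\}$. The rest is a routine algebraic verification on the three invariant pieces $F_1$, $F_2$, and $H$.
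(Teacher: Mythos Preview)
Your proof is correct and follows essentially the same route as the paper: split the finite-dimensional piece $F$ into the one-dimensional $\lambda$-eigenspace and the sum of the remaining generalized eigenspaces, combine the latter with $H$, and verify the three conditions of Definition~\ref{def spec gap} on the resulting decomposition. The only cosmetic difference is that the paper invokes the Jordan form of $U|_F$ rather than the Riesz projection to effect the split of $F$; since $F$ is finite-dimensional, the closedness and boundedness concerns you flag can in fact be settled by elementary linear algebra without the functional calculus.
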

\begin{proof}
We use the decomposition of $\mathcal{L}$ from Lemma \ref{h it m d f}. 
Since $F$ is finite dimensional, we can calculate the Jordan form of $L\lvert_{F}$. Since all eigenvalues of $L\lvert_{F}$ have modulus larger than $\tau$, the unique and simple eigenvalue on $\left\{z \colon \left|z\right| = \rho\left(U\right)\right\}$ is also unique and simple in $L\lvert_{F}$. Hence, the Jordan form consists of a $1\times 1$- block with eigenvalue $\lambda$ such that $\left|\lambda\right| = \rho\left(U\right)$, and possibly other Jordan blocks with eigenvalues $\lambda_i$ such that $\left|\lambda_i\right| < \left|\lambda\right|$ for each $i$.
Hence, $F$ can be decomposed into $F=\spann\left\{\nu\right\}\oplus F'$, where $U\nu=\lambda\nu$ and  $\rho\left(U\lvert_{F'}\right) < \left|\lambda\right|$. Thus, 
\begin{align*}
\mathcal{L}=\spann\left\{\nu\right\}\oplus F'\oplus H= \spann\left\{\nu\right\}\oplus H'
\end{align*}
where $U\left(H'\right)\subset H'$ and $\rho\left(U\lvert_{H'}\right) < \left|\lambda\right|$.

For the following we define $\Pi_1$ and $\Pi_2$ as the unique projections to the spaces $\spann\left\{\nu\right\}$ and $H'$, i.e.\ for every $f\in\mathcal{L}$ it holds $f=\Pi_1f+\Pi_2f$. These projections are idempotent, i.e.\ $\Pi_1^2=\Pi_1$ and $\Pi_2^2=\Pi_2$. 
Since $U\left(\spann\left\{\nu\right\}\right)\subset\spann\left\{\nu\right\}$ and $U\left(H'\right)\subset H'$, we have that 
\begin{align*}
P\coloneqq U\Pi_1=\Pi_1 U
\end{align*}
and 
\begin{align*}
N\coloneqq U\Pi_2=\Pi_2 U. 
\end{align*}
Since $\dim\left(\spann\left\{\nu\right\}\right)=1$, we have that $\dim\left(\Image\left(P\right)\right)=1$ and $P$ is obviously a projection.
Furthermore, we have 
\begin{align*}
PN=U\Pi_1U\Pi_2&=U\Pi_1U\left(\id-\Pi_1\right)=U^2\Pi_1\left(\id-\Pi_1\right)=U^2\left(\Pi_1-\Pi_1^2\right)=0 
\end{align*}
and analogously $NP=0$, which finishes the proof the lemma. 
\end{proof}

\begin{lemma}\label{spec gap}
Let $\widehat{ \sigma}$ be defined as in \eqref{hat CYRI}, then $\widehat{ \sigma}$ has a simple eigenvalue $\lambda=\rho\left(\widehat{\sigma}\right)=1$. This eigenvalue is unique on the unit circle and has maximal modulus. 
\end{lemma}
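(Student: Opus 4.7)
The plan is to verify the hypotheses of Lemma \ref{h it m d f} with $\mathcal{L}=\mathsf{H}$, $U=\widehat{\sigma}$, $\|\cdot\|=\|\cdot\|_{\epsilon_0}$, and the auxiliary semi-norm $|\cdot|'=|\cdot|_{1}$ (the $\mathcal{L}^1(\mu)$-norm), deducing quasi-compactness of $\widehat{\sigma}$, and then to isolate the peripheral spectrum using $\sigma$-invariance and the mixing property of $\mu$ from Proposition \ref{prop: g measure}, finally invoking Lemma \ref{quasi comp to spec gap} to get the spectral gap.

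For condition \eqref{Hennion 1}, continuity of $|\cdot|_{1}$ on $\mathsf{H}$ follows from $|f|_{1}\leq |f|_{\infty}\leq \|f\|_{\epsilon_0}$ (Lemma \ref{lem: H Banach algebra}). For \eqref{Hennion 2}, the defining identity \eqref{hat CYRI} together with $\sigma$-invariance of $\mu$ yields $|\widehat{\sigma}f|_{1}\leq |f|_{1}$, so $\widehat{\sigma}$ is even a contraction on $\mathcal{L}^1$. For \eqref{Hennion 4}, precompactness of $\widehat{\sigma}\{f\colon |f|_{1}<1\}$ in $(\mathsf{H},\|\cdot\|_{\epsilon_0})$ will follow from a Helly-type argument: a uniform bound on $|\widehat{\sigma}f|_{\epsilon_0}$ combined with uniform integrability allows one to extract, via a diagonal subsequence over a countable generating family of cylinder sets, a limit in the $\|\cdot\|_{\epsilon_0}$-topology, mimicking the argument of Saussol \cite{saussol_absolutely_2000} adapted to the symbolic setting.

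The main obstacle, and the heart of the proof, is \eqref{Hennion 3}: a Lasota--Yorke inequality of the form
\begin{align*}
\|\widehat{\sigma}^{n_0} f\|_{\epsilon_0}\leq r^{n_0}\|f\|_{\epsilon_0}+R\,|f|_{1}
\end{align*}
for some $r\in(0,1)$, $R>0$ and $n_0\in\mathbb{N}$. The ingredient is a pointwise bound for the oscillation of $\widehat{\sigma}f$ on a ball $B(\epsilon,x)$ with respect to $d_2$, using that $\sigma^{-1}B(\epsilon,x)$ decomposes into finitely many pieces each of diameter controlled by the Gibbs property of $\mu$ (with constant $C=0$) together with Lipschitz continuity of $f$ (the $g$-function). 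Since the $g$-measure is uniformly expanding in the $d_2$-metric (preimages shrink the $\mu$-mass geometrically), iterating this estimate $n_0$ times produces the contraction factor $r^{n_0}$; the remainder term, coming from the $|\cdot|_{\infty}$ piece of the norm, is absorbed into a multiple of $|f|_{1}$ by a standard covering argument involving the uniform lower bound on $\mu$ of cylinders.

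Quasi-compactness being established, the peripheral spectrum is handled as follows. Since $\exp(f)$ is a $g$-function, $\widehat{\sigma}\mathbbm{1}=\mathbbm{1}$, so $\rho(\widehat{\sigma})\geq 1$; the $\mathcal{L}^1$-contraction property yields $\rho(\widehat{\sigma})\leq 1$, so $\rho(\widehat{\sigma})=1$ is attained. Any other eigenfunction $h\in\mathsf{H}$ with $\widehat{\sigma}h=e^{\mathrm{i}\theta}h$ and $|h|_{\infty}=1$ would, by the mixing property of $\mu$ stated in Proposition \ref{prop: g measure}, force $\int h\cdot g\circ\sigma^{n}\,\mathrm{d}\mu=e^{\mathrm{i}n\theta}\int h\,g\,\mathrm{d}\mu\to\int h\,\mathrm{d}\mu\cdot\int g\,\mathrm{d}\mu$ for all $g\in\mathcal{L}^{\infty}$, forcing $e^{\mathrm{i}\theta}=1$ and $h$ constant. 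Thus $\lambda=1$ is the unique eigenvalue of modulus $1$ and it is simple, which via Lemma \ref{quasi comp to spec gap} yields the spectral gap. The delicate step throughout is the oscillation estimate producing the Lasota--Yorke inequality, as this is where the specific geometry of the $d_2$-balls and the regularity of the $g$-function genuinely enter.
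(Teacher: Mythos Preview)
You have conflated Lemma~\ref{spec gap} with Proposition~\ref{prop: spec gap}. The lemma asserts only three facts about $\widehat{\sigma}$: that $\rho(\widehat{\sigma})=1$, that $1$ is a simple eigenvalue, and that it is the unique eigenvalue on the unit circle. It does \emph{not} claim a spectral gap; that is the content of Proposition~\ref{prop: spec gap}, whose proof uses Lemma~\ref{spec gap} as one ingredient alongside quasi-compactness and Lemma~\ref{quasi comp to spec gap}. In the paper's logical order, Lemma~\ref{spec gap} is needed \emph{before} the Hennion--Herv\'e machinery is applied, precisely to supply the hypothesis ``spectral radius $1$'' required in the statement of Lemma~\ref{h it m d f}. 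Your plan to first verify \eqref{Hennion 1}--\eqref{Hennion 4}, deduce quasi-compactness, and only then analyse the peripheral spectrum is therefore circular as a proof of Lemma~\ref{spec gap}: the bulk of what you wrote is the proof of Proposition~\ref{prop: spec gap}, and you are invoking that proposition's machinery to establish one of its own prerequisites.

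Only your final paragraph addresses Lemma~\ref{spec gap}, and that part is essentially the standard argument the paper has in mind (the paper gives no proof, referring to \cite[Lemma~3.2]{kessebohmer_strong_2017}): $\widehat{\sigma}\mathbbm{1}=\mathbbm{1}$ yields the eigenvalue $1$, and the mixing identity $\int h\cdot g\circ\sigma^{n}\,\mathrm{d}\mu=e^{\mathrm{i}n\theta}\int hg\,\mathrm{d}\mu\to\int h\,\mathrm{d}\mu\int g\,\mathrm{d}\mu$ forces any peripheral eigenfunction to be constant, giving simplicity and uniqueness on the unit circle. One technical caveat: the inequality $|\widehat{\sigma}f|_{1}\leq|f|_{1}$ does not by itself give $\rho(\widehat{\sigma})\leq 1$ on $(\mathsf{H},\|\cdot\|_{\epsilon_0})$, since the spectral radius depends on the norm. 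The cleanest fix is to use positivity together with $\widehat{\sigma}\mathbbm{1}=\mathbbm{1}$ to obtain $|\widehat{\sigma}^{n}h|_{\infty}\leq|h|_{\infty}$; combined with an iterated Lasota--Yorke bound this gives $\sup_{n}\|\widehat{\sigma}^{n}\|_{\epsilon_0}<\infty$ and hence $\rho\leq 1$. Alternatively, observe that the lemma's assertion concerns only the \emph{point} spectrum on the unit circle, and any eigenfunction in $\mathsf{H}$ lies in $\mathcal{L}^{1}$, where the contraction argument does apply.
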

The proof is standard, but see also \cite[Lemma 3.2]{kessebohmer_strong_2017}.
For the next lemma let $L_f$ denote the Perron-Frobenius operator given in \eqref{eq: PF-opearator}. 
\begin{Prop}\label{prop: spec gap}
For $f\in F_{\theta}$, there exists $\epsilon_0\in\left(0,1\right)$ such that
$L_f$ is a bounded linear operator on $\mathsf{H}$ 
with respect to $\left\|\cdot\right\|_{\epsilon_0}$ and has a spectral gap.
\end{Prop}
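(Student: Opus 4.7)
The plan is to apply the Doeblin--Fortet / Hennion--Herv\'e criterion (Lemma \ref{h it m d f}) to $U = L_f$ on $\mathcal{L} = \mathsf{H}$ equipped with $\|\cdot\|_{\epsilon_0}$, using the auxiliary seminorm $|\cdot|' = |\cdot|_\infty$. Once quasi-compactness is established, Proposition \ref{prop: g measure} identifies $L_f$ with the transfer operator $\widehat{\sigma}$, so Lemma \ref{spec gap} supplies the uniqueness and simplicity of the peripheral eigenvalue $\lambda = 1$, and Lemma \ref{quasi comp to spec gap} then promotes quasi-compactness to a spectral gap.

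Conditions (a) and (b) of Lemma \ref{h it m d f} are straightforward. Continuity of $|\cdot|_\infty$ on $(\mathsf{H}, \|\cdot\|_{\epsilon_0})$ is immediate from Lemma \ref{lem: H Banach algebra}, which gives $|h|_\infty \leq \|h\|_{\epsilon_0}$. Since $\exp(f)$ is a $g$-function, $L_f \mathbbm{1} = \mathbbm{1}$, and positivity of $L_f$ together with this identity yields $|L_f h|_\infty \leq |h|_\infty$, giving condition (b) with $M = 1$ and also $\rho(L_f) = 1$.

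The heart of the argument is the Lasota--Yorke inequality (condition (c)). I would first establish a one-step estimate of the form
\[
|L_f h|_{\epsilon_0} \leq \xi \cdot |h|_{\epsilon_0} + C \cdot |h|_\infty
\]
for a suitable $\xi \in (0,1)$ when $\epsilon_0$ is small. The derivation starts from the identity
\[
L_f h(y_1) - L_f h(y_2) = \sum_{z_i \in \sigma^{-1}\{y_i\}} \bigl(e^{f(z_1)}(h(z_1)-h(z_2)) + h(z_2)(e^{f(z_1)}-e^{f(z_2)})\bigr),
\]
where $z_1,z_2$ lie in the same branch and $y_1,y_2 \in B(\epsilon,x)$. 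Upon integration over $x$, the second summand contributes $C \cdot |h|_\infty$, using that $f \in F_\theta$ is $d_1$-Lipschitz and that cylinder sets have controlled $d_1$-diameter, which yields bounded distortion on $d_2$-balls via the Gibbs property (Proposition \ref{prop: g measure}). The first summand transfers to an integral over $z$ by the duality $\int L_f g \, \rmd\mu = \int g \, \rmd\mu$; since the pre-image balls have $d_2$-radius of order $e^{f(z)} \cdot \epsilon$ and $e^{f(z)} < 1$, one extracts a factor $\xi < 1$ provided $\epsilon_0$ is sufficiently small. Iterating this inequality $n_0$ times so that $\xi^{n_0}$ is arbitrarily small, and combining with $|L_f^{n_0} h|_\infty \leq |h|_\infty$, produces $\|L_f^{n_0} h\|_{\epsilon_0} \leq r^{n_0} \|h\|_{\epsilon_0} + R \cdot |h|_\infty$ with $r < 1$.

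For the precompactness condition (d), I would exploit the smoothing property of $L_f$: given a $|\cdot|_\infty$-bounded sequence $(h_n) \subset \mathsf{H}$, the functions $L_f h_n$ inherit a uniformly controlled oscillation modulus $\epsilon \mapsto \int \osc(\cdot,B(\epsilon,x)) \, \rmd\mu/\epsilon$ from the $d_1$-Lipschitz regularity of $f$, so a diagonal Arzel\`a--Ascoli argument over finer and finer cylinder partitions extracts a subsequence convergent in $\|\cdot\|_{\epsilon_0}$. The main obstacle is the Lasota--Yorke inequality: unlike the classical Lipschitz setting, $d_2$ is defined through $\mu$-measures of cylinders, so the pull-back estimates for $d_2$-balls under $\sigma^{-1}$ must be translated into measure-expansion relations via the Gibbs property, and $\epsilon_0$ must be chosen small enough for the geometric constants in the oscillation estimate to yield $\xi < 1$.
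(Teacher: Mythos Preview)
Your overall architecture---Hennion--Herv\'e quasi-compactness combined with Lemmas \ref{spec gap} and \ref{quasi comp to spec gap}---matches the paper, and your Lasota--Yorke sketch for (c) is in the right spirit. The genuine problem is your choice of auxiliary seminorm $|\cdot|' = |\cdot|_\infty$; the paper uses $|\cdot|' = |\cdot|_1$, and this difference is not cosmetic.

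With $|\cdot|' = |\cdot|_\infty$, condition (d) asks that $L_f\{h \in \mathsf{H} : |h|_\infty < 1\}$ be $\|\cdot\|_{\epsilon_0}$-precompact. Your claim that ``$L_f h_n$ inherit a uniformly controlled oscillation modulus from the $d_1$-Lipschitz regularity of $f$'' is false: the oscillation of $L_f h$ on $B(\epsilon,x)$ is governed by the oscillation of $h$ on the preimage balls, not only by the regularity of $f$. Concretely, for $h_n = \mathbbm{1}_{[A_n]}$ with $A_n \in \mathbb{A}_n$, one has $L_f h_n(x) = e^{f(a_1 x)}\,\mathbbm{1}_{[a_2\ldots a_n]}(x)$, which up to a bounded Lipschitz factor is the indicator of an $(n-1)$-cylinder; its $|\cdot|_{\epsilon_0}$-seminorm is essentially that of $h_{n-1}$, not uniformly bounded. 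Moreover, for suitably chosen cylinders $|L_f h_n - L_f h_m|_\infty \geq \min e^f > 0$, so no subsequence is Cauchy in $\|\cdot\|_{\epsilon_0} \geq |\cdot|_\infty$. A single application of $L_f$ simply does not smooth an arbitrary $L^\infty$-bounded function into a quasi-H\"older-bounded one.

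The reason $|\cdot|_1$ is the correct choice is that the seminorm $|h|_{\epsilon_0}$ is precisely an $L^1$-modulus of continuity: if $\|h\|_{\epsilon_0} \leq 1$ then $|h - \mathbb{E}(h\mid\mathcal{B}_m)|_1 \leq 2\epsilon$ once all $m$-cylinders have measure below $\epsilon$, and the conditional expectations range over a finite-dimensional space. This Kolmogorov--Riesz style argument is exactly what the paper carries out for (d). Note that your Lasota--Yorke inequality with $|h|_\infty$ on the right is stronger than (hence implies) the paper's version with $|h|_1$, so switching to $|\cdot|' = |\cdot|_1$ costs you nothing in (c) and repairs (d).
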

\begin{proof}
We aim to apply Lemma \ref{h it m d f} in combination with Lemma \ref{quasi comp to spec gap} and Lemma \ref{spec gap} to the space $\left(\mathsf{H},\left\|\cdot\right\|_{\epsilon_0}\right)$ with
$\left|\cdot\right|_{1}$ as the semi-norm $\left|\cdot\right|'$. 
So we only have to show that $\left|\cdot\right|_{1}$ fulfills \eqref{Hennion 1} to \eqref{Hennion 4}.

{\em ad \eqref{Hennion 1}}:
Obviously, $\left|\cdot\right|_{1}$ is continuous. 

{\em ad \eqref{Hennion 2}}: 
Since $\mathrm{e}^f$ is positive and $L_f$ is normalized and the integral is $L_f$-invariant, it follows that 
\begin{align*}
\left|L_f w\right|_{1}
&=\int\left|\sum_{\sigma y=x}\mathrm{e}^{f\left(y\right)}w\left(y\right)\right|\mathrm{d}\mu\left(x\right)
\leq\int\sum_{\sigma y=x}\mathrm{e}^{f\left(y\right)}\left|w\left(y\right)\right|\mathrm{d}\mu\left(x\right)\\
&=\int L_f\left|w\right|\mathrm{d}\mu=\int\left|w\right|\mathrm{d}\mu=\left|w\right|_1,
\end{align*}
i.e.\  $L_f$ is bounded on $\mathsf{H}$ with respect to $\left|\cdot\right|_{1}$. 

{\em ad \eqref{Hennion 3}}:
Before we can start with the proof of \eqref{Hennion 3} we need the following two lemmas:
\begin{lemma}\label{lem: Gibbs gamma}
 There exists $\gamma\in\left(0,1\right)$, $K>0$ and $u\in\mathbb{N}$ such that for all $n\in\mathbb{N}$ and all $\mathsf{A}\in\mathbb{A}_n$
\begin{align}
 \mu\left(\left[\mathsf{A}\right]\right)\leq K\cdot \gamma^{\lfloor n/u\rfloor}.\label{eq: gibbs leq}
\end{align}
\end{lemma}
\begin{proof}
 By the Gibbs property, see Proposition \ref{prop: g measure},
there exists $K>0$ such that for all $n\in\mathbb{N}$, $\mathsf{A}\in \mathbb{A}_n$, and $x\in\left[\mathsf{A}\right]$ we have that 
\begin{align*}
 K^{-1}\leq \frac{\mu\left(\left[\mathsf{A}\right]\right)}{\exp\left(\mathsf{S}_n f\left(x\right)\right)}\leq K.
\end{align*}
Hence, 
\begin{align}
  \mu\left(\left[\mathsf{A}\right]\right)\geq K^{-1}\cdot \min_{z\in \X }\exp(f(z))^n.\label{eq: gibbs ineq}
 \end{align}
To prove \eqref{eq: gibbs leq} we use that $A$ is irreducible and aperiodic 
which implies that there is a maximal number $u$ such that all $x,\sigma(x),\ldots,\sigma^{u-2}(x)$ 
have only one preimage,
i.e.\ in this case $\sigma^{-1}(\sigma^{u-1}(x))$ consists of at least two points.

On the other hand, if $x$ has more than one preimage, then $\exp(f(y))<1$ for all $y$ fulfilling $\sigma(y)=x$. 
This implies 
\begin{align*}
 \exp(\mathsf{S}_nf(x))
 \leq \left(\sup_{\{z\in \X \colon \exp(f(z))<1\}}\exp(f(z))\right)^{\lfloor n/u\rfloor}.
\end{align*}
As $\exp(f)$ is bounded from below and $\sum_{\sigma(y)=x}\exp(f(y))=1$
the above term has to be less than one. 
Applying \eqref{eq: gibbs ineq} and setting 
$\gamma=\sup_{\{z\in \X \colon \exp(f(z))<1\}}\exp(f(z))$ 
gives the statement of \eqref{eq: gibbs leq}.
\end{proof}

\begin{lemma}\label{lem: existence of s}
 There exist $\epsilon_0,s\in(0,1)$ and $L\in\mathbb{N}$ such that for all $\ell\geq L$, 
 $\mathsf{A}\in\mathbb{A}_\ell$, $x\in \X $ with $\mathsf{A}x$ admissible, 
 and $\epsilon\in (0,\epsilon_0]$
 we have that $[\mathsf{A}]\cap \sigma^{-\ell}( B(\epsilon,x))\subset B(s\epsilon,\mathsf{A}x)$.
\end{lemma}
\begin{proof}
 For all $\epsilon>0$ and $x\in \X $ 
 there exists $n\in\mathbb{N}$ 
 such that $B(\epsilon,x)=[x_1\ldots x_n]$.
 Hence, if we write $[\mathsf{A}]=\mathsf{a}_{1}\ldots\mathsf{a}_{\ell}$, 
 then $[\mathsf{A}]\cap \sigma^{-\ell}( B(\epsilon,x))=[\mathsf{a}_{1}\ldots\mathsf{a}_{\ell}x_1\ldots x_n]$.
 By the Gibbs property from Proposition \ref{prop: g measure} we have 
\begin{align}
 \mu\left(\left[\mathsf{a}_{1}\ldots\mathsf{a}_{\ell}x_{1}\ldots x_{n}\right]\right)
 &\leq K\cdot \exp\left(\mathsf{S}_{\ell+n}f(\mathsf{A}x)\right)\notag\\
 &= K\cdot \exp\left(\mathsf{S}_{\ell}f(\mathsf{A}x)\right)\cdot \exp\left(\mathsf{S}_{n}f(x)\right)\notag\\
 &\leq K^3\cdot \mu\left(\left[\mathsf{a}_{1}\ldots\mathsf{a}_{\ell}\right]\right)\cdot \mu\left(\left[x_1\ldots x_n\right]\right)\notag\\
 &\leq K^3\cdot \mu\left(\left[\mathsf{a}_{1}\ldots\mathsf{a}_{\ell}\right]\right)\cdot \epsilon.\label{eq: mu ai aj}
\end{align}
Furthermore, 
 Lemma \ref{lem: Gibbs gamma} implies the existence of $L\in\mathbb{N}$ and $s\in (0,1)$ such that 
we have for all $\ell\geq L$ and all $\mathsf{A}\in\mathbb{A}_{\ell}$ that
$K^3\cdot \mu\left(\left[\mathsf{a}_{1}\ldots\mathsf{a}_{\ell}\right]\right)\leq s<1$.
Combining this consideration with \eqref{eq: mu ai aj} yields the statement of the lemma.
\end{proof}
Now we are in the position to begin with the proof of \eqref{Hennion 3}.
We have that
\begin{align*}
 \osc\left(L_f^{\ell}w,B\left(\epsilon,x\right)\right)
 &=\esssup_{y\in B\left(\epsilon, x\right)}\left(\sum_{\sigma^{\ell}\left(z\right)=y}\mathrm{e}^{\mathsf{S}_n\left(z\right)}\cdot w\left(z\right)\right)
 -\essinf_{y\in B\left(\epsilon, x\right)}\left(\sum_{\sigma^{\ell}\left(z\right)=y}\mathrm{e}^{\mathsf{S}_n\left(z\right)}\cdot w\left(z\right)\right).
\end{align*}
Since $\epsilon_0<1$ we have that $B\left(\epsilon,x\right)$ is a cylinder set of length at least one 
and we can conclude that 
\begin{align}
 \osc\left(L_f^{\ell}w,B\left(\epsilon,x\right)\right)
 &\leq \sum_{\substack{\mathsf{A}\in\mathbb{A}_{\ell},\\ \mathsf{A}x \text{ admissible}}}
 \esssup_{y\in B\left(\epsilon, x\right)}\left(\mathrm{e}^{\mathsf{S}_n\left(\mathsf{A}y\right)}\cdot w\left(\mathsf{A}y\right)\right)
 -\essinf_{y\in B\left(\epsilon, x\right)}\left(\mathrm{e}^{\mathsf{S}_n\left(\mathsf{A}y\right)}\cdot w\left(\mathsf{A}y\right)\right)\notag\\
 &=\sum_{\substack{\mathsf{A}\in\mathbb{A}_{\ell},\\ \mathsf{A}x \text{ admissible}}}
 \esssup_{z\in \sigma^{-\ell}\left(B\left(\epsilon, x\right)\right)\cap\left[\mathsf{A}\right]}\left(\mathrm{e}^{\mathsf{S}_n\left(z\right)}\cdot w\left(z\right)\right)
 -\essinf_{z\in \sigma^{-\ell}\left(B\left(\epsilon, x\right)\right)\cap\left[\mathsf{A}\right]}\left(\mathrm{e}^{\mathsf{S}_n\left(z\right)}\cdot w\left(z\right)\right)\notag\\
 &=\sum_{\substack{\mathsf{A}\in\mathbb{A}_{\ell},\\ \mathsf{A}x \text{ admissible}}}
 \osc\left(\mathrm{e}^{\mathsf{S}_n}\cdot w,\sigma^{-\ell}\left(B\left(\epsilon, x\right)\right)\cap\left[\mathsf{A}\right]\right).\label{eq: osc Lfl}
\end{align}
For the following calculations we assume that for given $\mathsf{A}\in\mathbb{A}_{\ell}$ and $x\in \X $ we have that $\mathsf{A}x$ is admissible and thus
$\left[\mathsf{A}\right]\cap\sigma^{-\ell}\left(B\left(\epsilon,x\right)\right)\neq \emptyset$.
Using \cite[Proposition 3.2 (iii)]{saussol_absolutely_2000} which can also be applied to the situation here 
yields
\begin{align}
\osc\left(\mathrm{e}^{\mathsf{S}_{\ell}f}\cdot w, \left[\mathsf{A}\right]\cap\sigma^{-\ell}\left(B\left(\epsilon,x\right)\right)\right)
&\leq \esssup_{z\in \left[\mathsf{A}\right]\cap\sigma^{-\ell}\left(B\left(\epsilon,x\right)\right)}\mathrm{e}^{\mathsf{S}_{\ell}f(z)}\cdot \osc\left(w, \left[\mathsf{A}\right]\cap\sigma^{-\ell}\left(B\left(\epsilon,x\right)\right)\right)\notag\\
&\qquad+\osc\left(\mathrm{e}^{\mathsf{S}_{\ell}f}, \left[\mathsf{A}\right]\cap\sigma^{-\ell}\left(B\left(\epsilon,x\right)\right)\right)\cdot \essinf_{z\in \left[\mathsf{A}\right]\cap\sigma^{-\ell}\left(B\left(\epsilon,x\right)\right)}\left|w\left(z\right)\right|.\label{eq: osc Lfw}
\end{align}

In order to estimate the first summand we notice that by Lemma \ref{lem: existence of s} there exists $s< 1$ 
such that for all $\mathsf{A}\in\mathbb{A}_\ell$ we have that
\begin{align}
\osc\left( w, \left[\mathsf{A}\right]\cap\sigma^{-\ell}\left(B\left(\epsilon,x\right)\right)\right)
&\leq \osc\left( w, B\left(s\cdot\epsilon,\mathsf{A}x\right)\right).\label{eq: osc Lfw 1a}
\end{align}
Furthermore, \eqref{eq: gibbs leq} implies that 
for all $\epsilon\in\left(0,1\right)$ there exists $n_{\epsilon}\in\mathbb{N}$ such that for all $x\in \X $
the set
$B\left(\epsilon,x\right)$ is a cylinder of length at least $n_{\epsilon}$.
Then $\mathsf{A}\in\mathbb{A}_{\ell}$ implies that 
$\left[\mathsf{A}\right]\cap\sigma^{-\ell}\left(B\left(\epsilon,x\right)\right)$ is a cylinder set of length at least $n+\ell$.
Let $R$ be the Lipschitz constant of $f$. 
Then we have that
\begin{align}
 \esssup_{z\in \left[\mathsf{A}\right]\cap\sigma^{-\ell}\left(B\left(\epsilon,x\right)\right)}\mathsf{S}_{\ell}f(z)
 \leq \mathsf{S}_{\ell}f(\mathsf{A}x)+R\cdot \sum_{j=0}^{\ell-1}\theta^{n_{\epsilon}+\ell-j-1}
 \leq \mathsf{S}_{\ell}f(\mathsf{A}x)+R\cdot \frac{\theta^{n_{\epsilon}}}{1-\theta}.\label{eq: osc Lfw 1b}
\end{align}

Noting that $\mathrm{e}^{\mathsf{S}_{\ell}f(x)}>0$ for all $x\in \X $
yields for the second summand of \eqref{eq: osc Lfw} that
\begin{align}
 \MoveEqLeft\osc\left(\mathrm{e}^{\mathsf{S}_{\ell}f}, \left[\mathsf{A}\right]\cap\sigma^{-\ell}\left(B\left(\epsilon,x\right)\right)\right)\cdot \essinf_{y\in \left[\mathsf{A}\right]\cap\sigma^{-\ell}\left(B\left(\epsilon,x\right)\right)}\left|w\left(y\right)\right|\notag\\
 &\leq \esssup_{z\in \left[\mathsf{A}\right]\cap\sigma^{-\ell}\left(B\left(\epsilon,x\right)\right)}\mathrm{e}^{\mathsf{S}_{\ell}f(z)}\cdot \left|w\left(\mathsf{A}x\right)\right|\notag\\
 &\leq \mathrm{e}^{\mathsf{S}_{\ell}f(\mathsf{A}x)}\cdot \left|w\left(\mathsf{A}x\right)\right|\cdot \exp\left(R\cdot \frac{\theta^{n_{\epsilon}}}{1-\theta}\right),\label{eq: osc Lfw 2}
\end{align}
where the last line follows from \eqref{eq: osc Lfw 1b}.

Combining \eqref{eq: osc Lfl}, \eqref{eq: osc Lfw}, \eqref{eq: osc Lfw 1a}, \eqref{eq: osc Lfw 1b}, and \eqref{eq: osc Lfw 2} yields 
\begin{align*}
 \osc\left(L_f^{\ell}w,B\left(\epsilon,x\right)\right)
 &\leq \sum_{\substack{\mathsf{A}\in\mathbb{A}_\ell\\ \mathsf{A}x\text{ admissible}}}\left(\mathrm{e}^{\mathsf{S}_{\ell}f(\mathsf{A}x)}\cdot\osc\left(w, \left[\mathsf{A}\right]\cap B\left(s\cdot\epsilon,\mathsf{A}x\right)\right)\cdot \exp\left(R\cdot \frac{\theta^{n_{\epsilon}}}{1-\theta}\right)\right.\\
&\qquad\left.+\mathrm{e}^{\mathsf{S}_{\ell}f(\mathsf{A}x)}\cdot \left|w\left(\mathsf{A}x\right)\right|\cdot \exp\left(R\cdot \frac{\theta^{n_{\epsilon}}}{1-\theta}\right)\right)\\
&=L_f^{\ell}\left(\osc\left(w, \left[\mathsf{A}\right]\cap B\left(s\cdot\epsilon,x\right)\right)+\left|w\right|\right)\cdot \exp\left(R\cdot \frac{\theta^{n_{\epsilon}}}{1-\theta}\right).
\end{align*}
Integrating with respect to $\mu$ and noting that the integral is $L_f$-invariant yields
\begin{align*}
 \int \osc\left(L_f^{\ell}w,B\left(\epsilon,x\right)\right)\mathrm{d}\mu
 \leq \int \left(\osc\left(w, \left[\mathsf{A}\right]\cap B\left(s\cdot\epsilon,\mathsf{A}x\right)\right)+\left|w\right|\right)\cdot \exp\left(R\cdot \frac{\theta^{n_{\epsilon}}}{1-\theta}\right)\mathrm{d}\mu.
\end{align*}
Using the definition of $\left|w\right|_{\epsilon_0}$ and assuming that $\epsilon<\epsilon_0$ yields
\begin{align*}
 \int \osc\left(L_f^{\ell}w,B\left(\epsilon,x\right)\right)\mathrm{d}\mu
 \leq \left|w\right|_{\epsilon_0}\cdot s\cdot \epsilon\cdot \exp\left(R\cdot \frac{\theta^{n_{\epsilon}}}{1-\theta}\right)
+\left|w\right|_1\cdot \exp\left(R\cdot \frac{\theta^{n_{\epsilon}}}{1-\theta}\right).
\end{align*}
If $n_{\epsilon}$ is large enough, we have that 
\begin{align}
 s\cdot \exp\left(R\cdot \frac{\theta^{n_{\epsilon}}}{1-\theta}\right)<1.\label{eq: cond nepsilon}
\end{align}
We choose $\epsilon_0$ sufficiently small such that \eqref{eq: cond nepsilon} 
is fulfilled for $\epsilon\coloneqq\epsilon_0$ and set $n_0\coloneqq n_{\epsilon_0}$ and 
$\xi\coloneqq s\cdot \exp\left(R\cdot \theta^{n_{0}}/\left(1-\theta\right)\right)$.
Using again the definition of $\left|w\right|_{\epsilon_0}$
gives 
\begin{align*}
 \left|L_f^{\ell}w\right|_{\epsilon_0}
 \leq \left|w\right|_{\epsilon_0}\cdot \xi 
+\left|w\right|_1\cdot \exp\left(C\cdot \frac{\theta^{n_0}}{1-\theta}\right)
\end{align*}
and thus the Hennion-Hervé inequality \eqref{mar tul ios} follows.

{\em ad \eqref{Hennion 4}}:
We first prove that $\mathsf{K}\coloneqq\{h:\left\|h\right\|_{\epsilon_0}<1\}$ is compact
using the approach of \cite[Lemma 2.3.18]{blank_discreteness_1997}.
From \eqref{eq: gibbs leq} we can conclude that for all $\epsilon>0$
there exists $M\in\mathbb{N}$ such that for all $m\geq M$ and $\mathsf{A}\in\mathbb{A}_m$ we have
$\mu\left(\left[\mathsf{A}\right]\right)\leq \epsilon$.

We define $\mathcal{B}_m$ as the sigma algebra generated by the cylinder sets of length $m$.
We note that the conditional expectations $\mathbb{E}\left(h\vert\mathcal{B}_m\right)$ 
are in particular piecewise constant functions.
For $\epsilon\leq \epsilon_0$ we have
\begin{align}
 \left|h-\mathbb{E}\left(h\vert\mathcal{B}_m\right)\right|_{1}
 &\leq \sum_{\mathsf{A}\in\mathbb{A}_m}\osc\left(h-\mathbb{E}\left(h\vert\mathcal{B}_m\right),\left[\mathsf{A}\right]\right)\cdot \mu\left(\left[\mathsf{A}\right]\right)\notag\\
 &\leq \int\osc\left(h-\mathbb{E}\left(h\vert\mathcal{B}_m\right), B\left(\epsilon,x\right)\right)\mathrm{d}\mu\left(x\right)\notag\\
 &\leq \left|h-\mathbb{E}\left(h\vert\mathcal{B}_m\right)\right|_{\epsilon}\cdot\epsilon\notag\\
 &\leq\left(\left|h\right|_{\epsilon_0}+\left|\mathbb{E}\left(h\vert\mathcal{B}_m\right)\right|_{\epsilon_0}\right)\cdot \epsilon
 \leq 2\cdot \epsilon. \label{eq: f-Gn f}
\end{align}
The last inequality follows from the fact that for each cylinder set $C$ we have that 
$\esssup_{x\in C}h(x)\geq \esssup_{x\in C}\mathbb{E}\left(h\vert\mathcal{B}_m\right)\left(x\right)$
and $\essinf_{x\in C}h(x)\leq \essinf_{x\in C}\mathbb{E}\left(h\vert\mathcal{B}_m\right)\left(x\right)$
which yields $\left|\mathbb{E}\left(h\vert\mathcal{B}_m\right)\right|_{\epsilon_0}\leq \left|h\right|_{\epsilon_0}\leq 1$, 
since $h\in \mathsf{K}$.

In the following fix an arbitrary sequence $\left(h_n\right)\subset \mathsf{K}$ and a new sequence of function $(h_n^{\left(m\right)})_{n\in\mathbb{N}}\coloneqq\left(\mathbb{E}\left(h_n\vert\mathcal{B}_m\right)\right)_{n\in\mathbb{N}}$. 
For given $m\in\mathbb{N}$ we know that $(h_n^{\left(m\right)})$ is a sequence of bounded functions being piecewise constant on the same finite number of intervals.
Hence, there exists a subsequence $n\left(j,m\right)$ such that $(h_{n\left(j,m\right)}^{\left(m\right)})_{j\in\mathbb{N}}$ 
is a Cauchy sequence in $\mathcal{L}^1$ and thus converges. 
Additionally we might require the function $n:\mathbb{N}^2\to\mathbb{N}$ 
being such that for each $m\in\mathbb{N}$ we have $\left\{n\left(j,m+1\right)\colon j\geq 1\right\}\subset\left\{n\left(j,m\right)\colon j\geq 1\right\}$. 
If we set $\overline{n}\left(j\right)\coloneqq n\left(j,j\right)$, then for each $m\in\mathbb{N}$ 
the sequence $(h_{\overline{n}\left(j\right)}^{\left(m\right)})_{j\in\mathbb{N}}$ is a Cauchy sequence in $\mathcal{L}^1$. 
We can conclude that for all $\epsilon>0$ and $m_0\in\mathbb{N}$ there exists $J\in\mathbb{N}$ such that for all $m\geq m_0$ and all $j,\ell\geq \max\left\{m_0,J\right\}$ we have that 
\begin{align}
 \left|h_{\overline{n}\left(j\right)}^{\left(m\right)}-h_{\overline{n}\left(\ell\right)}^{\left(m\right)}\right|_1<\epsilon\label{eq: fnjk-fnlk}
\end{align}
are fulfilled at the same time. 

In the last steps we will apply \eqref{eq: f-Gn f} and \eqref{eq: fnjk-fnlk} to obtain
\begin{align*}
\left|h_{\overline{n}\left(j\right)}-h_{\overline{n}\left(\ell\right)}\right|_1
&\leq\left|h_{\overline{n}\left(j\right)}-h_{\overline{n}\left(j\right)}^{\left(m\right)}\right|_1+\left|h_{\overline{n}\left(j\right)}^{\left(m\right)}-h_{\overline{n}\left(\ell\right)}^{\left(m\right)}\right|_1+\left|h_{\overline{n}\left(\ell\right)}^{\left(m\right)}-h_{\overline{n}\left(\ell\right)}\right|_1
\leq5\epsilon
\end{align*}
which proves that $\left(h_{\overline{n}\left(j\right)}\right)$ is a Cauchy sequence and thus convergent in $\mathcal{L}^1$. 
Hence, each sequence has a convergent subsequence and $\mathsf{K}$ is compact.

Since $\mathsf{K}$ is compact, we consider in the following an arbitrary sequence $\left(h_n\right)$ with $\left\|h_n\right\|_{\epsilon_0}<1$, for all $n\in\mathbb{N}$. 
Since $\mathsf{H}$ is complete, see Lemma \ref{lem: H Banach algebra}, there exists a subsequence $n_j$ such that $h_{n_j}$ is Cauchy
and there also exists $h\in\mathsf{H}$ 
with $\left\|h\right\|_{\epsilon_0}<1$ such that $\lim_{j\to\infty}\left\|h_{n_j}-h\right\|_{\epsilon_0}=0$. 
By the proof of \eqref{Hennion 2} we have that $\left|L_f h_{n_j}-L_f h\right|_{1}=\left|L_f \left(h_{n_j}- h\right)\right|_{1}\leq \left| h_{n_j}-h\right|_{1}$ which tends to zero.
Setting $g\coloneqq L_f h$ yields \eqref{Hennion 4}.

Having proved \eqref{Hennion 1} to \eqref{Hennion 4} we can apply Lemma \ref{h it m d f} 
and obtain that $L_f$ is quasi-compact. 
Combining this with Lemma \ref{spec gap} and Lemma \ref{quasi comp to spec gap} yields that $L_f$ has a spectral gap. 
\end{proof}

\subsection{Proofs of Theorems \ref{thm: main thm reg var} and \ref{thm: Markov}}\label{subsec: prop D holds}
We will first prove the following key lemma:
\begin{lemma}\label{lem: Prop E implies Prop D}
 Assume that $(\X , \mathcal{B},\mu, \chi)$ fulfills Property $\mathfrak{F}$. 
 Then there exists $\epsilon_0\in (0,1)$ such that 
 $\left(\X ,\mathcal{B},\mu,\sigma,\mathsf{H},\left\|\cdot\right\|_{\epsilon_0},\chi\right)$
 fulfills Property $\mathfrak{D}$. 
\end{lemma}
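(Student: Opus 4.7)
The plan is to verify each condition of Property $\mathfrak{D}$ in turn, drawing directly on the material already developed.

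First, Property $\mathfrak{C}$ for $(\X , \mathcal{B}, \sigma, \mu, \mathsf{H}, \|\cdot\|_{\epsilon_0})$ is essentially prepackaged: Proposition \ref{prop: g measure} guarantees $\sigma$-invariance and mixing of $\mu$ and identifies the transfer operator $\widehat{\sigma}$ with the Perron-Frobenius operator $L_f$; Lemma \ref{lem: H Banach algebra} shows that $(\mathsf{H},\|\cdot\|_{\epsilon_0})$ is a Banach algebra containing the constants with $\|\cdot\|_{\epsilon_0}\geq|\cdot|_\infty$; and Proposition \ref{prop: spec gap} furnishes some $\epsilon_0^{\mathrm{sg}}\in(0,1)$ for which $L_f$ is bounded and has a spectral gap on $(\mathsf{H},\|\cdot\|_{\epsilon_0^{\mathrm{sg}}})$. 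Inspection of the proof of Proposition \ref{prop: spec gap} shows that condition \eqref{eq: cond nepsilon} also holds for any smaller choice, so the spectral gap persists for every $\epsilon_0\in(0,\epsilon_0^{\mathrm{sg}}]$.

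Next, the bounds \eqref{C 1} and \eqref{C 2} would be obtained by augmenting the seminorm bounds of Property $\mathfrak{F}$ with the trivial uniform bounds $|\upperleft{\ell}{}{\chi}|_\infty\leq\ell$ and $|\mathbbm{1}_{\{\chi\geq\ell\}}|_\infty\leq 1$: denoting by $\epsilon_0^{\mathrm{F},1}$ and $\epsilon_0^{\mathrm{F},2}$ the constants from \eqref{eq: cond chi 1} and \eqref{eq: cond chi 2}, this gives $\|\upperleft{\ell}{}{\chi}\|_{\epsilon_0^{\mathrm{F},1}}\leq(1+K_1)\ell$ and $\|\mathbbm{1}_{\{\chi\geq\ell\}}\|_{\epsilon_0^{\mathrm{F},2}}\leq 1+K_2$. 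To transfer the latter bound from $\{\chi\geq\ell\}$ to $\{\chi>\ell\}$ as required by \eqref{C 2}, I would use that $\mathbbm{1}_{\{\chi\geq\ell+1/k\}}\uparrow\mathbbm{1}_{\{\chi>\ell\}}$ pointwise and apply Fatou's lemma to the integral that defines $|\cdot|_{\epsilon_0}$. Finally, observing that $\epsilon\mapsto|h|_\epsilon$ is monotonically non-decreasing (since the defining supremum runs over a smaller interval), one can take $\epsilon_0:=\min\{\epsilon_0^{\mathrm{sg}},\epsilon_0^{\mathrm{F},1},\epsilon_0^{\mathrm{F},2}\}$ and have all three properties hold simultaneously.

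The main subtlety I anticipate concerns the literal requirement $\chi\in\mathcal{F}^+$ in Property $\mathfrak{D}$: since $\mathsf{H}\subset\mathcal{L}^\infty$, it fails whenever $\chi$ is unbounded, which is precisely the interesting case (e.g.\ for regularly varying tails). This clause should be read in the weaker sense that every truncation $\upperleft{\ell}{}{\chi}$ and every tail indicator $\mathbbm{1}_{\{\chi>\ell\}}$ belongs to $\mathsf{H}$; this is exactly what the quantitative bounds above establish and matches the way Property $\mathfrak{D}$ is actually used in the arguments of \cite{kessebohmer_strong_2017}.
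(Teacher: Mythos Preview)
Your proposal is correct and follows essentially the same route as the paper: invoke Proposition \ref{prop: g measure}, Lemma \ref{lem: H Banach algebra}, and Proposition \ref{prop: spec gap} to obtain Property $\mathfrak{C}$, then deduce \eqref{C 1} and \eqref{C 2} from \eqref{eq: cond chi 1} and \eqref{eq: cond chi 2}. Your treatment is in fact more careful than the paper's brief one-line conclusion: you explicitly address the monotonicity in $\epsilon_0$ needed to reconcile the three constants, the passage from $\{\chi\geq\ell\}$ to $\{\chi>\ell\}$ via Fatou, and the formal tension between $\chi\in\mathsf{H}\subset\mathcal{L}^\infty$ and unbounded $\chi$, none of which the paper comments on.
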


\begin{proof}
 By Proposition \ref{prop: g measure}, $\mu$ is mixing and $\sigma$-invariant.
 By Lemma \ref{lem: H Banach algebra} $\mathsf{H}$ is a Banach algebra of functions which contains the constant functions 
 and fulfills $\left\|\cdot\right\|_{\epsilon_0}\geq \left|\cdot\right|_{\infty}$. 
 Proposition \ref{prop: spec gap} implies that there exists $\epsilon_0>0$ such that $\widehat{\sigma}$
 is a bounded linear operator with respect to $\left\|\cdot\right\|_{\epsilon_0}$
 and has spectral gap on 
 $\mathsf{H}$. 
 Hence, $\left(\X , \mathcal{B}, T,\mu,\mathsf{H},\left\|\cdot\right\|_{\epsilon_0}\right)$ fulfills Property $\mathfrak{C}$.
 Additionally, \eqref{eq: cond chi 1} and \eqref{eq: cond chi 2} imply that \eqref{C 1} and \eqref{C 2} are fulfilled for the same $\epsilon_0$
 giving Property $\mathfrak{D}$.
\end{proof}

\begin{proof}[Proof of Theorem \ref{thm: main thm reg var}]
 Theorem \ref{thm: main thm reg var} follows immediately by applying Lemma \ref{lem: Prop E implies Prop D} on
 \cite[Theorem 1.7]{kessebohmer_strong_2017}.
 This theorem gives the same statement as Theorem \ref{thm: main thm reg var} 
 with the condition on $\left(X,\mathcal{B},\mu,\chi\right)$ fulfilling Property $\mathfrak{F}$
 being replaced by the condition of $\left(\X ,\mathcal{B},\mu,\sigma,\mathsf{H},\left\|\cdot\right\|_{\epsilon_0},\chi\right)$
 fulfilling Property $\mathfrak{D}$.
\end{proof}

\begin{proof}[Proof of Theorem \ref{thm: Markov}]
We have to show that $(\X ,\mathcal{B},\mu,\chi)$ fulfills Property $\mathfrak{F}$.
Applying then Lemma \ref{lem: Prop E implies Prop D} and Theorem \ref{thm: asymp thm} immediately implies the statement of Theorem \ref{thm: Markov}. 

One can easily calculate that $\mu$ is a $g$-measure 
and $g\colon \X \to \X $ with 
$g\left(y\right)=\mu\left(x_1=y_1\vert x_2=y_2\right)$ the corresponding $g$-function
and it follows immediately that $\log g$ 
is Lipschitz continuous.

It remains to show 
\eqref{eq: cond chi 1} and \eqref{eq: cond chi 2}.
Let $[\mathfrak{a}_1]_n$ denote the $n$-cylinder $\left[\mathfrak{a}_1\ldots\mathfrak{a}_1\right]$.
For given $\epsilon\in (0,1)$ there exists $n\in\mathbb{N}_0$ such that $\epsilon\in \left[\mu([\mathfrak{a}_1]_{n}),\mu([\mathfrak{a}_1]_{n-1})\right)$.
This implies
\begin{align*}
 \osc\left(\upperleft{\ell}{}{\chi},B(\epsilon,x)\right)
 \leq \begin{cases}
       \ell &\text{ if }x\in [\mathfrak{a}_1]_{n}\\
       0&\text{ otherwise.}
      \end{cases}
\end{align*}
This implies 
\begin{align*}
 \frac{\int \osc\left(\upperleft{\ell}{}{\chi},B(\epsilon,x)\right)\mathrm{d}\mu}{\epsilon}
 &=\frac{\ell\cdot \mu\left([\mathfrak{a}_1]_{n}\right)}{\epsilon}\leq \ell.
\end{align*}
Since the choice of $\epsilon_0$ was arbitrary, \eqref{eq: cond chi 1} holds. 
Similarly, we have for the same choice of $\epsilon$ 
\begin{align*}
 \osc\left(\mathbbm{1}_{\{\chi\geq\ell\}},B(\epsilon,x)\right)
 \leq \begin{cases}
       1 &\text{ if }x\in [\mathfrak{a}_1]_{n}\\
       0&\text{ otherwise.}
      \end{cases}
\end{align*}
This implies 
\begin{align*}
 \frac{\int \osc\left(\mathbbm{1}_{\{\chi\geq\ell\}},B(\epsilon,x)\right)\mathrm{d}\mu}{\epsilon}
 &=\frac{\mu\left([\mathfrak{a}_1]_{n}\right)}{\epsilon}\leq \ell
\end{align*}
implying \eqref{eq: cond chi 2} since $\epsilon_0'$ was arbitrary.
\end{proof}


\begin{thebibliography}{KMS16}

\bibitem[Aar77]{aaronson_ergodic_1977}
J.~Aaronson.
\newblock On the ergodic theory of non-integrable functions and infinite
  measure spaces.
\newblock {\em Israel J. Math}, 27(2):163--173, 1977.

\bibitem[AN03]{aaronson_trimmed_2003}
J.~Aaronson and H.~Nakada.
\newblock Trimmed sums for non-negative, mixing stationary processes.
\newblock {\em Stochastic Process. Appl.}, 104(2):173--192, 2003.

\bibitem[BGT87]{bingham_regular_1987}
N.~H. Bingham, C.~M. Goldie, and J.~L. Teugels.
\newblock {\em Regular Variation}.
\newblock Cambridge University Press, Cambridge, 1987.

\bibitem[Bla97]{blank_discreteness_1997}
M.~Blank.
\newblock {\em Discreteness and continuity in problems of chaotic dynamics},
  volume 161 of {\em Translations of Mathematical Monographs}.
\newblock American Mathematical Society, Providence, RI, 1997.
\newblock Translated from the Russian manuscript by the author.

\bibitem[DF37]{doeblin_sur_1937}
W.~Doeblin and R.~Fortet.
\newblock Sur des cha{\^\i}nes {\`a} liaisons compl{\`e}tes.
\newblock {\em Bull. Soc. Math. France}, 65:132--148, 1937.

\bibitem[DV86]{diamond_estimates_1986}
H.~G. Diamond and J.~D. Vaaler.
\newblock Estimates for partial sums of continued fraction partial quotients.
\newblock {\em Pacific J. Math.}, 122(1):73--82, 1986.

\bibitem[GK11]{gyorfi_rate_2011}
L.~Gy\"orfi and P.~Kevei.
\newblock On the rate of convergence of the {S}t. {P}etersburg game.
\newblock {\em Period. Math. Hungar.}, 62(1):13--37, 2011.

\bibitem[Hae93]{haeusler_nonstandard_1993}
E.~Haeusler.
\newblock A nonstandard law of the iterated logarithm for trimmed sums.
\newblock {\em Ann. Probab.}, 21(2):831--860, 1993.

\bibitem[Hay14]{haynes_quantitative_2014}
A.~Haynes.
\newblock Quantitative ergodic theorems for weakly integrable functions.
\newblock {\em Ergodic Theory Dynam. Systems}, 34(2):534--542, 2014.

\bibitem[HH01]{hennion_limit_2001}
H.~Hennion and L.~Herv\'{e}.
\newblock {\em Limit Theorems for Markov Chains and Stochastic Properties of
  Dynamical Systems by Quasi-Compactness}.
\newblock Springer, Berlin, 2001.

\bibitem[HM87]{haeusler_laws_1987}
E.~Haeusler and D.~M. Mason.
\newblock Laws of the iterated logarithm for sums of the middle portion of the
  sample.
\newblock {\em Math. Proc. Cambridge Philos. Soc.}, 101(02):301--312, 1987.

\bibitem[HM91]{haeuler_asymptotic_1991}
E.~Haeusler and D.~M. Mason.
\newblock On the asymptotic behavior of sums of order statistics from a
  distribution with a slowly varying upper tail.
\newblock In {\em Sums, trimmed sums and extremes}, volume~23 of {\em Progr.
  Probab.}, pages 355--376. Birkh\"auser Boston, Boston, MA, 1991.

\bibitem[ITM50]{ionescu_theorie_1950}
C.~T. Ionescu-Tulcea and G.~Marinescu.
\newblock Theorie ergodique pour des classes d'operations non completement
  continues.
\newblock {\em Ann. of Math.}, 52(1):140--147, 1950.

\bibitem[Kea72]{keane_strongly_1972}
M.~Keane.
\newblock Strongly mixing {$g$}-measures.
\newblock {\em Invent. Math.}, 16:309--324, 1972.

\bibitem[KM92]{kesten_ratios_1992}
H.~Kesten and R.~A. Maller.
\newblock Ratios of trimmed sums and order statistics.
\newblock {\em Ann. Probab.}, 20(4):1805--1842, 1992.

\bibitem[KM95]{kesten_effect_1995}
H.~Kesten and R.~A. Maller.
\newblock The effect of trimming on the strong law of large numbers.
\newblock {\em Proc. London Math. Soc.}, s3-71:441--480, 1995.

\bibitem[KMS16]{MR3585883}
M.~Kesseb\"ohmer, S.~Munday, and B.~O. Stratmann.
\newblock {\em Infinite ergodic theory of numbers}.
\newblock De Gruyter Graduate. De Gruyter, Berlin, 2016.

\bibitem[KS17]{kessebohmer_strong_2016}
M.~Kesseb{\"o}hmer and T.~Schindler.
\newblock Strong laws of large numbers for intermediately trimmed sums of
  i.i.d. random variables with infinite mean.
\newblock {\em J. Theoret. Probab.}, pages 1--19, 2017 (online first), DOI: 10.1007/s10959-017-0802-0.

\bibitem[KS18]{kessebohmer_strong_2017}
M.~Kesseb{\"o}hmer and T.~Schindler.
\newblock Strong laws of large numbers for intermediately trimmed {B}irkhoff
  sums of observables with infinite mean.
\newblock {\em Stochastic Process. Appl.}, pages 1--45, 2018 (to appear),
DOI: 10.1016/j.spa.2018.11.015

\bibitem[Mal84]{maller_relative_1984}
R.~A. Maller.
\newblock Relative stability of trimmed sums.
\newblock {\em Z. Wahrsch. Verw. Gebiete}, 66(1):61--80, 1984.

\bibitem[Mor76]{mori_strong_1976}
T.~Mori.
\newblock The strong law of large numbers when extreme terms are excluded from
  sums.
\newblock {\em Z. Wahrsch. Verw. Gebiete}, 36(3):189--194, 1976.

\bibitem[Mor77]{mori_stability_1977}
T.~Mori.
\newblock Stability for sums of i.i.d. random variables when extreme terms are
  excluded.
\newblock {\em Z. Wahrsch. Verw. Gebiete}, 40(2):159--167, 1977.

\bibitem[Nak15]{nakata_limit_2015}
T.~Nakata.
\newblock Limit theorems for nonnegative independent random variables with
  truncation.
\newblock {\em Acta Math. Hungar.}, 145(1):1--16, 2015.

\bibitem[PP90]{parry_zeta_1990}
W.~Parry and M.~Pollicott.
\newblock {\em Zeta Functions and the Periodic Orbit Structure of Hyperbolic
  Dynamics}.
\newblock Soci{\'e}t{\'e} math{\'e}matique de France, Paris, 1990.

\bibitem[Sau00]{saussol_absolutely_2000}
B.~Saussol.
\newblock Absolutely continuous invariant measures for multidimensional
  expanding maps.
\newblock {\em Israel J. Math.}, 116(1):223--248, 2000.

\bibitem[Sch18]{schindler_observables_2018}
T.~Schindler.
\newblock Trimmed sums for observables on the doubling map.
\newblock {\em preprint: arXiv:1810.03223}, 2018.

\bibitem[Wal75]{walters_ruelle_1975}
P.~Walters.
\newblock Ruelle's operator theorem and {$g$}-measures.
\newblock {\em Trans. Amer. Math. Soc.}, 214:375--387, 1975.

\end{thebibliography}
\end{document}